%% file: main.tex
\documentclass[11pt]{article}
\usepackage[T1]{fontenc}
\usepackage[utf8]{inputenc}
\usepackage{lmodern,microtype}
\usepackage[margin=1in]{geometry}
\usepackage{amsmath,amssymb,amsthm,mathtools,booktabs,array}
\usepackage{xurl}
\usepackage{tikz}
\usetikzlibrary{arrows.meta,positioning,calc,decorations.pathreplacing,patterns}
\usepackage[hidelinks,pagebackref]{hyperref}
\newcommand{\doi}[1]{\href{https://doi.org/#1}{doi:\nolinkurl{#1}}}
\renewcommand*{\backref}[1]{}
\renewcommand*{\backrefalt}[4]{\ifcase #1\or\space(Cited on p.~#2.)\else\space(Cited on pp.~#2.)\fi}
\hypersetup{pdftitle={Barrier Log-Concavity of Large Forests},pdfauthor={Vadim E. Levit}}

\newtheorem{theorem}{Theorem}[section]
\newtheorem{lemma}[theorem]{Lemma}

\newtheorem{corollary}[theorem]{Corollary}
\theoremstyle{definition}
\newtheorem{conjecture}[theorem]{Conjecture}
\newtheorem{problem}[theorem]{Problem}
\newtheorem{example}[theorem]{Example}

\theoremstyle{remark}
\newcommand{\E}{\mathbb E}
\newcommand{\Var}{\operatorname{Var}}
\newcommand{\Prob}{\mathbb P}
\newcommand{\I}{\mathcal I}

\newcommand{\ii}{\mathrm i}

\title{Barrier Log-Concavity of Large Forests}
\author{Vadim E. Levit\\[2pt]
\small Department of Mathematics\\
\small Ariel University, Israel\\[2pt]
\small\texttt{levitv@ariel.ac.il}}
\date{October 15, 2026}

\begin{document}
\maketitle
\begin{abstract}
We prove that the independence sequence of every sufficiently large forest is
strictly barrier log-concave: whenever $i_{k+1}>i_k$, one has
$i_k^2>i_{k-1}i_{k+1}$. Consequently, every common downward coefficient tilt is
unimodal, with one order threshold valid simultaneously for all $0<t\le1$.
As an application, we prove that every sufficiently large well-covered tree
has a strictly log-concave independence polynomial. More generally, the
latter conclusion holds for well-covered forests with sufficiently large
matching number. Every sufficiently large forest with a palindromic independence
polynomial is also strictly log-concave, with its modes at the central ranks.
The proof establishes local coefficient-curvature estimates
uniformly over forests of unbounded maximum degree and over activities
tending to zero. Combined with the fixed-density theorem of Fang, Lu,
Nevo, Yao, and Zheng, these estimates complete the strict log-concavity
interval down to rank one through $\lfloor17\alpha/25\rfloor$; the terminal
fraction is inherited. The classical decreasing tail then yields the
barrier property. A cumulative-coefficient corona theorem transfers barrier
log-concavity of the base forest to full strict log-concavity of its leaf
corona.
\end{abstract}
\medskip
\noindent\textbf{2020 Mathematics Subject Classification.} Primary 05C31; Secondary 05C05, 05C69, 60F05.
\smallskip

\noindent\textbf{Keywords.} Independence polynomial; forest; barrier log-concavity; unimodality; well-covered graph; corona; hard-core model.

\section{Introduction and main results}\label{sec:introduction}
For a finite simple graph $G$, let $i_k(G)$ count its independent sets of
size $k$, let $\alpha(G)$ be its independence number, and write
\[
 I(G;x)=\sum_{k=0}^{\alpha(G)}i_k(G)x^k,\qquad
 \Delta_k(G)=i_k(G)^2-i_{k-1}(G)i_{k+1}(G).
\]
The coefficients are positive on $0,\ldots,\alpha(G)$. An internal index
satisfies $1\le k<\alpha(G)$; outside the support the coefficients are zero.
All graph deletions below are induced, and $N_G[A]=A\cup N_G(A)$.
The independence polynomial was introduced by Gutman and Harary
\cite{GutmanHarary1983}.

A positive sequence $a_0,\ldots,a_d$ is \emph{barrier log-concave} (BL) if
\[
 a_{k+1}>a_k\quad\Longrightarrow\quad a_k^2\ge a_{k-1}a_{k+1}
 \qquad(1\le k<d).
\]
It is \emph{strictly} BL if the conclusion is strict at every such ascent.
Ordinary log-concavity (LC) means $a_k^2\ge a_{k-1}a_{k+1}$ at every
internal index. In contrast, BL permits failures on
the nonascending portion of the sequence but rules out a renewed ascent. Indeed, with
$r_k=a_k/a_{k-1}$, BL is equivalent to
$r_{k+1}\le\max\{1,r_k\}$, and hence implies unimodality.

\begin{theorem}[Barrier log-concavity of large forests]\label{cor:barrier}
There exists an absolute integer $N$ such that every forest $F$ of order
$n\ge N$ satisfies
\[
 i_{k+1}(F)>i_k(F)\quad\Longrightarrow\quad
 i_k(F)^2>i_{k-1}(F)i_{k+1}(F)
 \qquad(1\le k<\alpha(F)).
\]
Moreover, for every $0<t\le1$, the coefficient sequence of $I(F;tx)$ is
strictly barrier log-concave and unimodal. The same $N$ works for all such $t$.
\end{theorem}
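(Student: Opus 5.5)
The proof has three parts: the small ranks, the large ranks, and the transfer to the tilted sequences. Fix a forest $F$ of order $n$ with $\alpha=\alpha(F)$. We split the internal indices $1\le k<\alpha$ into a lower range $1\le k\le \lfloor 17\alpha/25\rfloor$ and an upper range $k>\lfloor17\alpha/25\rfloor$.

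\textbf{Lower range.} Here we aim for full strict log-concavity, $\Delta_k(F)>0$, once $n\ge N$. Write $k=\theta\alpha$. When $\theta$ is bounded away from $0$, we invoke the fixed-density theorem of Fang, Lu, Nevo, Yao and Zheng, which gives strict log-concavity for forests at ranks in a fixed proportion of $\alpha$, in a density window reaching up to $17/25$. The remaining work is uniformity as $\theta\to0$, including $k$ bounded, and over unbounded maximum degree.

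For this regime we use the hard-core representation. Choose the activity $\lambda$ so that the mean size of a random independent set under $\lambda^{|S|}/I(F;\lambda)$ equals $k$. Then $i_{k\pm1}\lambda^{k\pm1}/I(F;\lambda)$ and $i_k\lambda^k/I(F;\lambda)$ are point probabilities near the mean. Since $F$ is a forest, $I(F;x)$ has only real roots, by Heilmann--Lieb applied through the matching polynomial of trees. This fails for general forests, so instead I would use the local coefficient-curvature estimates the paper advertises. Concretely, a local CLT or Edgeworth expansion with variance $\sigma^2\gtrsim k$ (leaves force variance) should give
\[\log i_{k+1}-2\log i_k+\log i_{k-1}=-\sigma^{-2}+o(\sigma^{-2}),\]
with the error controlled uniformly as $\lambda\to0$. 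Star-like components of huge degree are handled by conditioning on the centers. For bounded $k$ the sign of $\Delta_k$ comes directly from Newton-type counting: $i_1=n$, $i_2\le\binom n2$, and so on.

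\textbf{Upper range.} For $k>17\alpha/25$ we use the classical decreasing tail. Every forest, being bipartite (in fact it suffices that it has a König structure), satisfies $i_{k}\ge i_{k+1}$ for $k\ge\lceil(2\alpha-1)/3\rceil$, by the Levit--Mandrescu tail result. Since $17/25>2/3$, the upper range contains no ascent, so the barrier hypothesis never triggers there. Together with the lower range this gives the first claim: any ascent occurs at $k\le17\alpha/25$, where $\Delta_k>0$.

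\textbf{Tilts.} Multiplying $i_k$ by $t^k$ leaves $\Delta_k$ positive: $(t^ki_k)^2-t^{k-1}i_{k-1}t^{k+1}i_{k+1}=t^{2k}\Delta_k$. For $t\le1$, an ascent of the tilted sequence implies an ascent of the original, since $t^{k+1}i_{k+1}>t^ki_k$ forces $i_{k+1}>i_k/t\ge i_k$. Hence strict BL transfers with the same $N$. Unimodality follows from the ratio reformulation $r_{k+1}\le\max\{1,r_k\}$ noted in the introduction.

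The main obstacle is the uniform curvature estimate in the lower range as $\lambda\to0$ with unbounded degrees. There one must show that the variance dominates the third-cumulant corrections uniformly. I would first try to factor over components and compare each component's cumulants to its first-order term $n_j\lambda$.
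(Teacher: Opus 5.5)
\paragraph{Verdict: the architecture is right, but the central estimate is missing.}
Your architecture is the paper's. Small ranks go by extension counting and intermediate ranks by a curvature estimate at the saddle activity. Ranks $\ge\varepsilon n$ go by the fixed-density theorem of Fang--Lu--Nevo--Yao--Zheng. The Levit--Mandrescu tail confines every ascent below $17\alpha/25$, and the factor $t^{2k}$ handles tilts. Your upper range and tilt transfer are correct as written. The gap is the middle regime, which is the entire new content of the theorem. You only describe it as what a local CLT or Edgeworth expansion ``should give.''

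\paragraph{What the middle regime must actually deliver.}
The assembly needs a quantitative statement: fixed constants $\lambda_0>0$ and $K'$ such that for \emph{every} forest and every $0<\lambda\le\lambda_0$ with $n\lambda\ge K'$ and integer mean $\eta_F(\lambda)=k$, one has $2p_k-p_{k-1}-p_{k+1}>0$. Only then can you do three things. First, fix $\varepsilon$ so that ranks below $\varepsilon n$ have saddle activity at most $\lambda_0$; the paper takes $\lambda_0=1$ and $\varepsilon=1/5$, using $\eta_F(1)\ge n/4$. Second, apply the fixed-density theorem at that \emph{fixed} $\varepsilon$. Third, let the counting bound $n\ge2k^2-4\Rightarrow\Delta_k>0$ cover $k<K'$ by taking $N\ge\max\{N_\varepsilon,2K'^2\}$.

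An asymptotic $o(\sigma^{-2})$ statement without uniformity over forests does not produce such a $K'$. Moreover, a local CLT with point error $o(\sigma^{-1})$ cannot decide a second difference of order $\sigma^{-3}$.

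Your suggested route does not supply this estimate. Factoring over components is vacuous when $F$ is a single tree, which is exactly the hard case. ``Conditioning on centers'' is no mechanism for trees with many high-degree vertices at all depths.

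Separately, drop the real-rootedness remark; it is false. Heilmann--Lieb concerns line graphs, a tree is a line graph only when it is a path, and already $I(K_{1,3};x)=1+4x+3x^2+x^3$ has nonreal roots.

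\paragraph{How the paper supplies the missing step.}
Root each tree, reveal parents before children, and write $X-\E X=\sum_v d_v\zeta_v$ as a martingale. The responses solve $(\mathrm{Id}+\mathsf M)d=\boldsymbol1$, where $\mathsf M$ weights children by their descendant occupation probabilities $q_u$.

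With $b_v=q_v(1-q_v)$, the weighted operator norms $\|\mathsf M\|_{p,b}$ are below one for $p=3/2,2,3$, uniformly in degree, because a parent with many substantial children has correspondingly small weight $b_v$. This gives two bounds with $B_0\le n\lambda$: the cubic bound $\sum_v\E|\Delta_v|^3\le125B_0$, and the clock bound $\E|A-\Var X|^{3/2}\le250B_0$ for the predictable variance clock. A stopped-clock martingale comparison then yields $|\chi(s)-e^{-s^2/2}|\le256\nu^{-1/3}s^2+3000\nu^{-1/2}|s|^3$. Bipartition conditioning gives $\sigma^2\ge\nu/32$ and the damping $|\chi(s)|\le e^{-s^2/250}$.

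The decisive observation concerns the integer mean. There
$\sigma^3(2p_k-p_{k-1}-p_{k+1})=\frac1{2\pi}\int_{-\pi\sigma}^{\pi\sigma}2\sigma^2(1-\cos(s/\sigma))\operatorname{Re}\chi(s)\,ds$.
The kernel is approximately $s^2$, so an $o(1)$ characteristic-function error already fixes the sign against the main term $1/\sqrt{2\pi}$; no Edgeworth expansion is needed. Without an argument of this kind, uniform in degree and in $\lambda\le\lambda_0$, the lower prefix, and hence the theorem, is not established.
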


Eventual forest unimodality was established by Fang, Lu, Nevo, Yao, and
Zheng \cite{FLNYZ}. Theorem~\ref{cor:barrier} strengthens that conclusion:
it controls the determinant at \emph{every ascent} and establishes
unimodality simultaneously for every common downward tilt. It does not
assert full log-concavity or settle BL at all forest orders.

More recently, Zhang and Li \cite[Theorem~1.1 and Propositions~1.2--1.3]{ZhangLi2026}
state unimodality for every finite forest in a computer-assisted proof
combining coverage through order $60$ with an argument from order $59$.
Li, Vallier, and Zhang \cite[Theorem~1.1 and Section~1]{LVZ} present a
second proof, treating orders at most $24$ by exact counting and orders
at least $25$ by a moment argument with finite computational checks.
They explicitly build on Zhang and Li's decomposition and other inputs,
and report a Lean~4 formalization. The latter manuscript is explicitly marked provisional.

Both all-order statements concern the unweighted sequence of $I(F;x)$.
Unimodality alone does not imply the barrier condition or unimodality of
$I(F;tx)$ for every $0<t\le1$; the Gaussian example in
Section~\ref{subsec:beyond-forests} illustrates this distinction for
positive sequences. Neither all-order unimodality theorem is a premise
of the main results below. The separately stated quantitative input
from \cite{LVZ} is used only in the optional supplement.

A graph is \emph{well-covered} if all its maximal independent sets have
cardinality $\alpha(G)$. Write $\mu(F)$ for its maximum matching size and
$z(F)$ for its number of isolated vertices. Our main structural application
reaches full log-concavity.

\begin{theorem}[Log-concavity of large well-covered forests]\label{cor:eventual-wc}
Let $N$ be the threshold in Theorem~\ref{cor:barrier}. Every well-covered
forest $F$ with
\[
 \mu(F)\ge N,\qquad\text{equivalently}\qquad |V(F)|-z(F)\ge2N
\]
has a strictly log-concave independence polynomial at every internal index.
The conclusion is preserved under every common positive coefficient tilt.
In particular, every well-covered forest without isolated vertices of
order at least $2N$, and hence every well-covered tree of that order,
has a strictly log-concave independence polynomial.
\end{theorem}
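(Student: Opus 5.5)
Throughout, the plan rests on the structure theorem for well-covered forests: a forest without isolated vertices is well-covered exactly when it is the leaf corona $H\circ K_1$ of a forest $H$, meaning every vertex of $H$ receives exactly one pendant leaf. In general, a well-covered forest $F$ is the disjoint union of $z(F)$ isolated vertices and a leaf corona $H\circ K_1$. In that case $\mu(F)=|V(H)|=(|V(F)|-z(F))/2$, which gives the stated equivalence. It also gives $I(F;x)=(1+x)^{z(F)}\,I(H\circ K_1;x)$. Multiplying by $(1+x)$ preserves strict log-concavity of a positive sequence without internal zeros (convolution with a log-concave sequence, with strictness kept by the standard Hoggar-type argument). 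Tilts commute with this factorisation. So it suffices to treat $F=H\circ K_1$ with $|V(H)|=m\ge N$.

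Next I would compute the corona polynomial cumulatively. An independent set of $H\circ K_1$ consists of an independent set $S$ of $H$, together with any subset of the leaves attached to $V(H)\setminus S$. Hence
\[
 I(H\circ K_1;x)=\sum_{j}i_j(H)\,x^j(1+x)^{m-j}=(1+x)^m\,I\!\left(H;\tfrac{x}{1+x}\right).
\]
After the substitution $y=x/(1+x)$, the coefficient $c_k$ of $x^k$ is a binomial transform $c_k=\sum_j i_j(H)\binom{m-j}{k-j}$. The key step is a cumulative-coefficient lemma: if $(a_j)$ is a positive, strictly barrier log-concave sequence, and this remains true for every tilt $a_j t^j$ with $0<t\le1$, then its transform $c_k$ is strictly log-concave at every internal index. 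I would prove this by writing $\Delta_k(c)$ as a symmetric double sum $\sum_{j,l}a_ja_l\,D_{j,l}(k)$, where $D$ is a $2\times2$ binomial minor, and pairing terms $(j,l)$ with $(j+1,l-1)$. On the ascending range, barrier log-concavity gives the needed pairwise inequality directly. On the descending range, I would argue that the monotone decrease of $a_j$, combined with total positivity of the kernel $\binom{m-j}{k-j}$ (the rows shift with $k$), controls the remaining terms. Here I use that the binomial kernel is TP$_2$ in $(j,k)$ in the appropriate orientation.

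Then I would apply Theorem~\ref{cor:barrier} to $H$, which is a forest of order $m\ge N$. This gives strict BL for $I(H;tx)$ uniformly in $0<t\le1$, which is exactly the hypothesis the lemma needs. For a common positive tilt $s>0$ of $F$'s coefficients, note that $I(F;sx)=(1+sx)^m I\bigl(H;\tfrac{sx}{1+sx}\bigr)$. This has the same form as before, with the binomial kernel weighted by $s$, and log-concavity of $c_k s^k$ is tilt invariant anyway. So tilt preservation is automatic once strict LC holds. The final sentence of the theorem follows because trees of order at least $2$ have no isolated vertices.

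The main obstacle I expect is the lemma on the descending part of $(a_j)$. There BL gives no curvature information, so log-concavity of $c$ must come entirely from the binomial smoothing. My first attempt would be to use the unimodality of every tilt, with $t=y/(1-y)$ ranging over $(0,1]$, in order to locate the mode of the weights $a_j\binom{m-j}{k-j}$. I would then show that for each $k$ the dominant contributions to $c_{k-1},c_k,c_{k+1}$ come from indices $j$ in the ascending region, where BL applies. The tail would then be absorbed by the strict margin.
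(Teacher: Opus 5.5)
\textbf{Verdict: genuine gap.} Your reductions are sound and agree with the paper: $F=(H\circ K_1)\sqcup z(F)K_1$ with $\mu(F)=|V(H)|$, the corona identity $I(H\circ K_1;x)=(1+x)^mI(H;x/(1+x))$, isolated vertices handled by factors of $1+x$ (the paper instead raises the transform order to $m+z(F)$), and tilt invariance via $t^{2k}\Delta_k$.

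The gap is the transfer lemma. It is the whole content of the theorem beyond Theorem~\ref{cor:barrier}, and you leave it unproved exactly where it is hard.
\begin{itemize}
\item Pairing $(j,l)$ with $(j+1,l-1)$ in a double sum for $\Delta_k(c)$ needs inequalities like $a_ja_l\le a_{j+1}a_{l-1}$. BL supplies these only below the mode.
\item The standard total-positivity preservation results for the kernel $\binom{m-j}{k-j}$ assume a log-concave input. That is precisely what fails past the mode.
\item Locating dominant contributions and absorbing the rest into ``the strict margin'' is not an argument. Strict BL gives no quantitative lower bound on any $\Delta_j$. Moreover, the inputs supplied by Theorem~\ref{cor:barrier} can have arbitrarily severe descending-side defects: via Fan's construction, the paper records large strictly BL trees with $i_j^2/(i_{j-1}i_{j+1})<\varepsilon$ at several ranks at once.
\end{itemize}
What is needed is an exact inequality that tolerates such defects, not a dominance heuristic.

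The missing idea is to pass to prefix sums. You name a ``cumulative-coefficient lemma'' but never use cumulative coefficients. Put $A_k=\sum_{i\le k}a_i$. Then $A_k^2-A_{k-1}A_{k+1}=a_kA_k-a_{k+1}A_{k-1}$.
\begin{itemize}
\item At a non-ascent this is at least $a_k^2$.
\item At an ascent, BL applied backward gives $a_1/a_0\ge\cdots\ge a_{k+1}/a_k$. Hence $a_{k+1}A_{k-1}\le a_k(A_k-a_0)<a_kA_k$.
\end{itemize}
So BL makes $(A_k)$ log-concave everywhere, whatever the defects of $a$ itself (Lemma~\ref{lem:corona-cumulative}).

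The paper then shows that cumulative log-concavity alone forces $\mathcal C_m[a]=(1+x)^ma(x/(1+x))$ to be strictly log-concave for every $m\ge d$ (Theorem~\ref{thm:corona-transfer}). The proof is by induction on $m$ through $\mathcal C_m[a]=(1+x)\,\mathcal C_{m-1}[a_0,\ldots,a_{m-1}]+a_mx^m$.
\begin{itemize}
\item Multiplication by $1+x$ preserves strict log-concavity, so only the top determinant is new.
\item With $c_r=a_{m-r}$, that determinant becomes $(\E R)^2>\E\binom{R}{2}$ for a law whose tail sums $S_j$ are the reversed prefix sums.
\item Log-concavity of these tail sums gives $S_{i+j}\le S_iS_j$. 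Therefore $\E\binom{R}{2}=\sum_{i,j\ge1}S_{i+j}<\bigl(\sum_{i\ge1}S_i\bigr)^2=(\E R)^2$.
\item The inequality is strict because of the pair $i=j=m$, where $S_{2m}=0<S_m^2$.
\end{itemize}
This exact inequality is what your descending-range step would have to supply.
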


The hypothesis is on the nonisolated part. A well-covered forest has the
form $(H\circ K_1)\sqcup z(F)K_1$, where $H$ is a forest and
$|V(H)|=\mu(F)$; the classical structural background includes
Levit--Mandrescu \cite{LM1999,LM2004}. The complete proof in
Section~\ref{app:corona} applies BL to the \emph{whole} base forest $H$ and
then a cumulative-coefficient corona transfer. No componentwise lower
order bound and no generic BL product-closure assertion are used.
A second application giving log-concavity at every internal rank concerns
palindromic independence polynomials; Corollary~\ref{cor:palindromic-large} gives strict log-concavity
and exact central modes under the same hypothesis that the order is
sufficiently large.

\subsection{The stronger prefix behind the barrier theorem}
\begin{theorem}[Complete lower prefix]\label{thm:main}
There is an absolute integer $N$ such that every forest of order $n\ge N$
satisfies
\[
 \Delta_j(F)>0\qquad
 \left(1\le j\le\left\lfloor\frac{17\alpha(F)}{25}\right\rfloor\right).
\]
\end{theorem}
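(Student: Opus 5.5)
Following the route sketched in the abstract, the proof splits the range $1\le j\le\lfloor17\alpha/25\rfloor$ into a terminal fraction and a low/intermediate range. The terminal fraction $j\in[c\alpha,\lfloor17\alpha/25\rfloor]$, for a suitable fixed $c<17/25$, is taken from the fixed-density theorem of Fang, Lu, Nevo, Yao, and Zheng \cite{FLNYZ}. We read that theorem as giving $\Delta_j(F)>0$ for all large forests whenever $j/\alpha(F)$ stays in a fixed compact subinterval below the critical density. Our task is then to show $\Delta_j(F)>0$ uniformly for $1\le j\le c\alpha(F)$, with an absolute threshold $N$ not depending on the maximum degree.

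For the low range, write $\Delta_j>0$ in hard-core language. Let $\lambda_j$ be the activity at which the mean size of a random independent set of $F$ is $j$. Then $i_j\lambda^j/I(F;\lambda)=\Prob_\lambda(|S|=j)$, and strict log-concavity at $j$ is a local-limit statement: the ratio $\Prob(|S|=j)^2/(\Prob(|S|=j-1)\Prob(|S|=j+1))$ exceeds $1$. On a forest the size $|S|$ is a sum over components, and within a tree it is built from leaf-to-root recursions. The plan is a local central limit theorem with an explicit second-order Edgeworth correction for $\log\Prob(|S|=k)$ around its mean, giving
\[
\log\frac{i_j^2}{i_{j-1}i_{j+1}}=\frac{1}{\sigma^2}+O\!\left(\frac{\kappa_3^2}{\sigma^6}+\frac{\kappa_4}{\sigma^4}\right)+\text{(error)},
\]
where $\sigma^2$, $\kappa_3$, $\kappa_4$ are the variance and the third and fourth cumulants at $\lambda_j$. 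We would control the cumulants by $\kappa_r\le C_r\,\sigma^2$ and the characteristic function away from $0$ by $|\E e^{\ii\theta|S|}|\le\exp(-c\,\theta^2\sigma^2)$ for $|\theta|\le\pi$. The latter comes from a decomposition into independent pieces (for example, a conditioning on a maximum matching, or on leaf–neighbor pairs, each contributing an independent Bernoulli-like variable of variance $\gtrsim\lambda/(1+\lambda)^2$). This gives $\sigma^2\ge c\,j$ in the range $j\le c\alpha$, hence $\sigma^2\to\infty$ once $j$ is large.

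Three regimes are then handled separately. For bounded $j$ (say $j\le J_0$), the activity tends to zero and the Gaussian approximation is useless. Here we argue directly combinatorially: $i_1=n$, $i_2=\binom n2-|E|$, and in general $i_k$ is close to $\binom nk$ minus lower-order corrections because a forest has at most $n-1$ edges. A Newton-type argument therefore shows $\Delta_j>0$ for fixed $j$ and $n\ge N(J_0)$. Stars and high-degree vertices must be treated carefully, since they give $i_k$ a non-Poisson correction. For $J_0<j\le c\alpha$, the Edgeworth expansion applies with error $o(1/\sigma^2)$ uniformly, provided the cumulant bounds hold uniformly in the degree structure and as $\lambda_j\to0$.

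The main obstacle is exactly this uniformity. The bounds must hold for forests of unbounded maximum degree, where a single star $K_{1,m}$ contributes a two-point-like component with a large atom, and as $\lambda\to0$, where $|S|$ is nearly Poisson and the relative errors must be measured against $1/\sigma^2\approx1/j$. We would first try to isolate the heavy components: split $F$ into stars and a residual forest, compute the star generating factors $(1+x)^m+x$ exactly, and convolve. Log-concavity then comes from the residual Gaussian part, while the star factors contribute a controlled, nearly log-concave perturbation. Matching the constant $c$ against the \cite{FLNYZ} window is routine once this is in place.
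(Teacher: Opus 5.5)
Your three-range architecture matches the paper's: an elementary bound for bounded $j$, a local-limit argument at the saddle activity with $\eta_F(\lambda)=j$, and the fixed-density theorem of \cite{FLNYZ} near $17\alpha/25$. The two outer ranges are fine. Since $\alpha(F)\ge n/2$, your window $[c\alpha,17\alpha/25]$ lies inside $[\varepsilon n,17\alpha/25]$ with $\varepsilon=c/2$. For bounded $j$, the crude estimate $i_k=\binom nk(1-O(k^2/n))$ already works once $n\gg j^3$.

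The gap is the middle range, which is the whole content of the theorem. There you assume exactly what must be proved: uniform bounds $\kappa_r\le C_r\sigma^2$, a second-order lattice Edgeworth expansion with local remainder $o(\sigma^{-3})$ for one large dependent tree, and damping $|\E e^{\ii\theta X}|\le e^{-c\theta^2\sigma^2}$. The concrete mechanisms you offer break precisely in the unbounded-degree case.
\begin{itemize}
\item Conditioning on a maximum matching or on disjoint leaf--neighbour pairs leaves at most $\mu(F)$ free Bernoulli variables. For a disjoint union of stars $K_{1,m}$ that is $n/(m+1)$ variables. You then get damping only $\exp(-c\lambda n\theta^2/m)$ and a variance bound of order $j/m$, not $cj$.
\item Splitting off stars and convolving with $(1+x)^m+x$ factors $I(F;x)$ only when the stars are whole components. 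A hub inside a large tree (a spider, or a caterpillar whose spine vertices all have large degree) cannot be removed multiplicatively.
\item Independence of components does not help, since $F$ may be a single tree. Cumulant bounds at $\theta=0$ do not by themselves control the remainder of $\log\E e^{\ii\theta X}$ for a dependent sum.
\end{itemize}
Your display is also mis-scaled. Under $\kappa_r\le C_r\sigma^2$ the terms $\kappa_3^2/\sigma^6+\kappa_4/\sigma^4$ can be of order $\sigma^{-2}$, the same as the main term, so the display cannot decide the sign. The true corrections are of order $\kappa_4/\sigma^6$ and $\kappa_3^2/\sigma^8$.

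The paper never needs an Edgeworth expansion. It tunes $\lambda$ so that the mean is exactly the integer $j$. It then proves $2p_j>p_{j-1}+p_{j+1}$, which gives $p_j^2>p_{j-1}p_{j+1}$ by the arithmetic--geometric mean inequality. Fourier inversion gives
\[
 \sigma^3(2p_j-p_{j-1}-p_{j+1})=\frac1{2\pi}\int_{-\pi\sigma}^{\pi\sigma}2\sigma^2\bigl(1-\cos(s/\sigma)\bigr)\operatorname{Re}\chi(s)\,ds .
\]
The multiplier already carries the factor $s^2$. So a first-order comparison $|\chi(s)-e^{-s^2/2}|\le256\nu^{-1/3}s^2+3000\nu^{-1/2}|s|^3$, together with $|\chi(s)|\le e^{-s^2/250}$, places this quantity within $0.39$ of $1/\sqrt{2\pi}$ once $\nu\ge2^{104}$.

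The degree-uniform inputs your plan lacks are these:
\begin{itemize}
\item Bipartition conditioning: condition on one colour class, so a star's leaves become about $m$ independent Bernoulli variables. This gives $\sigma^2\ge\nu/32$ and the damping bound.
\item The parent-before-child martingale $X-\E X=\sum_v d_v\zeta_v$ with responses $d=(\mathrm{Id}+\mathsf M)^{-1}\boldsymbol1$. The weighted $\ell^{3/2},\ell^2,\ell^3$ norms of $\mathsf M$ are below $1/2,3/5,4/5$ uniformly in degree because $\lambda_v\le1$. This yields cubic-increment and predictable-clock bounds of order $\nu$.
\item A stopped-clock martingale characteristic-function comparison, which turns those bounds into the displayed estimate on $\chi$.
\end{itemize}
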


The same $N$ may be chosen in Theorems~\ref{cor:barrier}--\ref{thm:main}.
Fang, Lu, Nevo, Yao, and Zheng prove strict log-concavity on
$[\varepsilon n,17\alpha(F)/25]$ for each fixed $\varepsilon>0$ and every
sufficiently large forest \cite[Theorem~1.2 and Section~9]{FLNYZ}.
The fraction $17/25$ and that fixed-density input are inherited.
Our completion covers all lower integer ranks with one order threshold.
The quantifiers do not permit substituting $\varepsilon=1/n$:
$N_{1/n}$ need not be at most $n$.

The new ingredient is the degree-uniform central-curvature estimate in
Theorem~\ref{central:thm:central}. It allows activity to tend to zero and
covers the intermediate ranks missed by the elementary quadratic bound.
The completed prefix overlaps the classical decreasing tail, beginning at
$\lceil(2\alpha(F)-1)/3\rceil$ \cite{LM}. Every possible ascent therefore
lies inside the strictly log-concave prefix, proving Theorem~\ref{cor:barrier}.
Figure~\ref{fig:prefix-cover} illustrates this overlap.

For the prescribed cutoff $q(F)=\lceil(2\alpha(F)-1)/3\rceil$,
prefix log-concavity means
$\Delta_k(F)\ge0$ for every internal $k<q(F)$. Within the class with that
certified tail, full LC implies the prescribed-prefix property, which
implies BL and unimodality. Allowing the cutoff to be chosen afterward
at the first mode removes the additional strength. Ordered (Poisson)
LC, $k i_k^2\ge(k+1)i_{k-1}i_{k+1}$, is a different, stronger property.

\paragraph{Proof architecture.}
Sections~\ref{sec:smallrank}--\ref{sec:centralcurvature} prove the uniform
curvature estimate. Section~\ref{sec:uniformcompletion} completes the
prefix and proves Theorem~\ref{cor:barrier}; Section~\ref{app:corona}
gives the full corona argument and proves Theorem~\ref{cor:eventual-wc}.
The palindromic application follows directly from the completed prefix
by coefficient reflection in Section~\ref{subsec:palindromic}.

\paragraph{Optional quantitative refinements.}
The quantitative supplement \cite{QuantitativeSupplement} contains the
sharper low-activity analysis and explicit refinements. None of its
additional estimates is needed for these main theorems. The optional
explicit BL cutoff in that supplement retains a separately stated input
from Li, Vallier, and Zhang \cite{LVZ}; the main proof here does not use it.

\subsection{Relation to earlier methods}
The root recursion, bipartition damping, and Fourier second-difference
framework are adapted from \cite[Sections~4--6]{FLNYZ}; the occupation-odds
recursion also underlies Weitz's tree method \cite{Weitz}.
Related response recursions and innovation-variance identities appear in
Zhang--Li \cite[Section~7.1, (83)--(86)]{ZhangLi2026} and
Li--Vallier--Zhang \cite[Section~4.3, Lemmas~4.24--4.25]{LVZ}.
Those identities are not claimed as new here. The additional weighted
operator-norm and predictable-clock estimates control third moments,
clock fluctuations, and hence curvature uniformly in activity and
maximum degree. A point-probability error
$o(\sigma^{-1})$ alone does not decide a second difference of order
$\sigma^{-3}$.

Vanishing activity is already allowed in the bounded-degree local limit
theorem of Jain, Perkins, Sah, and Sawhney \cite[Theorem~1.5]{JPSS}:
for fixed $\Delta\ge3$ and $0<\delta<1$, it allows $\lambda_n\to0$ with
$n\lambda_n\to\infty$ and
$\lambda_n\le(1-\delta)\lambda_c(\Delta)$, where
$\lambda_c(\Delta)=(\Delta-1)^{\Delta-1}/(\Delta-2)^\Delta$.
Jain, Michelen, Pham, and Vuong \cite[Proposition~10]{JMPV} give
high-order Edgeworth expansions at bounded degree with a fixed positive
lower activity bound. Our contribution is simultaneous degree/activity
uniformity at the coefficient-curvature scale, not vanishing activity
or high-order approximation alone. Classical antecedents include Bender
\cite{Bender} and Dobrushin--Tirozzi \cite{DT}; zero-based criteria
\cite{LPRS,MS} impose different hypotheses. Pre-Lorentzian certificates
\cite{BendjeddouHardiman,LiuTang} concern specified graph constructions.

The supplement records additional variance bounds in relation to
Davies--Sandhu--Tan \cite{DST} and Zhang--Xu \cite{ZhangXu}, as well as a
normal-approximation consequence using Rai\v{c}'s explicit Berry--Esseen
bound \cite{Raic}. Those auxiliary refinements are not hidden inputs to
the proof below. The sharp-prefix companion \cite{LevitDraft} proves the
first twelve LC inequalities at every forest order; its certificates are
not premises here.

\subsection{Earlier partial unimodality and special graph classes}\label{sec:historical-partial}
The distinction between a theorem about a whole sequence and one about
prescribed parts of it has a substantial history. Alavi, Malde, Schwenk,
and Erd\H{o}s \cite{AMSE1987} posed the tree/forest unimodality question
in the setting of essentially unrestricted coefficient orderings for
general graphs. The surveys of Levit and Mandrescu
\cite{LMSurvey2005,LMPartial2006} organize the earlier results, including
\emph{partial unimodality}: a nondecreasing initial segment and a
nonincreasing terminal segment, with a possibly unresolved middle.
That terminology does not assert log-concavity of the initial segment
or the BL implication used here. We record the principal deterministic
bounds and the relevant positive families separately.

\paragraph{Initial and terminal inequalities.}
Write $a=\alpha(G)$ and $n=|V(G)|$ in this historical discussion.
Michael and Traves \cite{MichaelTraves2003} proved
$i_0\le\cdots\le i_{\lceil a/2\rceil}$ for well-covered graphs.
The failure of full unimodality for general well-covered graphs,
and the eventual resolution of the permitted coefficient-ordering
problem by Cutler and Pebody \cite{CutlerPebody2017}, make this a
proper partial result, not an all-rank theorem. Levit and Mandrescu
\cite{LMGeneric2006} also construct connected well-covered counterexamples
for every independence number at least eight. Levit and Mandrescu's
very well-covered, quasi-regularizable, and perfect-graph results
\cite{LMParis2007}, together with their K\"onig--Egerv\'ary formulation
\cite{LM}, give the terminal inequality
\[
 i_{\lceil(2a-1)/3\rceil}\ge\cdots\ge i_a
\]
for forests; for very well-covered graphs it accompanies the initial
segment through $\lceil a/2\rceil$. That work also proves unimodality
for very well-covered graphs with $a\le9$ and LC for those with
$a\le5$. The perfect-graph statement has the
clique-number cutoff $\lceil(\omega a-1)/(\omega+1)\rceil$.
The 2007 chapter develops the earlier 2004 preprint; these are not
counted as two independent discoveries.

Levit and Mandrescu \cite[Theorem~2.1]{LMRoller2017} also prove that if
$\kappa|A|\le|N(A)|$ for every independent set $A$, where $\kappa>0$,
then
\[
 (k+1)i_{k+1}\le\bigl(n-(\kappa+1)k\bigr)i_k.
\]
This gives a nonincreasing tail beginning at
$\lceil(n-1)/(\kappa+2)\rceil$ when that index lies in the support.
We use $\kappa$, not the source's $\lambda$, to avoid confusing this
expansion parameter with physical activity. For connected
$1$-well-covered graphs on more than two vertices, their results give
an initial segment through $\lceil2a/3\rceil$ and a terminal segment
from $\lceil(n-1)/3\rceil$. Here $1$-well-covered means that the graph
and every single-vertex deletion are well-covered. Galvin
\cite{Galvin2012} studies other partial-unimodality regions in regular
bipartite graphs; those hypotheses should not be replaced by
bipartiteness alone.

Basit and Galvin \cite[Theorems~3, 5, and 6]{BG} prove a
nonincreasing tail from $\lceil a(n-1)/(n+a)\rceil$ for every graph.
If every maximal independent set has size at least $\beta$, they prove
an initial segment through $\lceil\beta/2\rceil$; for an $n$-vertex
tree this yields $\lceil(n-a+1)/4\rceil$. These are prior monotonicity
bounds, distinct from the newer forest initial segment through
$\lceil n/4\rceil$ and the fixed-density LC interval in \cite{FLNYZ}.
None of these monotone-segment conclusions by itself asserts a
nonnegative Tur\'an determinant at all of its indices.

\paragraph{Graph operations and well-covered constructions.}
The earlier graph-operation theorem of Levit and Mandrescu
\cite{LMGraphOps2008} treats $G=H\circ 2K_1$, obtained by attaching two
private leaves at each vertex of $H$. With $a=\alpha(G)$, not
$\alpha(H)$, its stated partial ranges are
\[
 i_0\le\cdots\le i_{\lfloor(2a+2)/5\rfloor},\qquad
 i_{\lceil(3a-2)/5\rceil}\ge\cdots\ge i_a.
\]
Their symmetry work \cite{LMPal2007,LMSym2011} and Mandrescu's later
palindromicity argument \cite{Mandrescu2012} belong to this line of
research; the latter proves full unimodality for $H\circ2K_1$.
Symmetric tree constructions and their relation to $\gamma$-positivity
are studied in \cite{HKV}; further comparisons appear in the quantitative
supplement. These operations differ from the single-leaf corona
$H\circ K_1$ used in the present comparison.

For the single-leaf corona, Levit and Mandrescu
\cite{LM2003Family} proved unimodality when the \emph{base} has
independence number at most four; Chen and Wang \cite{ChenWang2010}
extended that condition to eight. The early well-covered-tree papers
\cite{LM2002,LM2003LNCS,LM2004} supply positive families including
well-covered spiders and centipedes. Further coefficient identities and root correspondences for the
single-leaf corona appear in \cite{LMRoots2008,LMCorona2016}. Brown and Cameron
\cite{BC2018} prove strict LC after sufficiently many iterated leafy
extensions of a fixed graph, while Galvin and Hilyard
\cite[Proposition~1.10]{GalvinHilyard2018} obtain LC by attaching
sufficiently many leaves to every vertex of a fixed graph. The number
of iterations, the number of leaves, the independence number of the
base, and its total order are different parameters; the corresponding
hypotheses are not interchangeable. Hoang, Levit, Mandrescu, and Pham
\cite{HLMPUnimodal2026} establish coefficient inequalities for $W_p$
graphs and unimodality of clique coronas of stars. These results do not
assert full LC for every single-leaf corona.

\paragraph{Tree families and stronger algebraic certificates.}
Hamidoune's claw-free LC theorem \cite{Hamidoune1990} and the
real-rootedness theorem of Chudnovsky and Seymour
\cite{ChudnovskySeymour2007} are important tools behind several
constructions. A claw-free tree is a path, so neither theorem covers
arbitrary trees. Z.-F.~Zhu \cite{ZFZhu2007} treats centipedes and
related families. Wang and B.-X.~Zhu \cite{WangZhu2011} use
factorizations for graph concatenations, including specified regular
caterpillars. Galvin and Hilyard \cite{GalvinHilyard2018} establish
LC for alternating pendant-decoration families. Bahls, Ethridge,
and Szabo \cite{BahlsEthridgeSzabo2018} prove unimodality for
nonregular caterpillars satisfying explicit ordering and growth
conditions on their leaf counts, not for all caterpillars.

The path-like constructions of Bahls and Salazar
\cite{BahlsSalazar2010}, Bahls \cite{Bahls2012}, and Bahls, Bailey,
and Olsen \cite{BahlsBaileyOlsen2014} give further symmetric-unimodal,
LC, or real-rooted families, according to the particular construction.
Bencs \cite{Bencs2018} uses stable-path-tree methods to prove
real-rootedness for families including centipedes and Fibonacci trees.
Partial-order and direct LC approaches for specified tree families
appear in \cite{BRGG2019,BRGG2020,BZhuChen2019}.
Mandrescu and Spivak \cite{MandrescuSpivak2016} prove LC for the
odd-order trees maximizing the number of maximal independent sets
and for certain even-order cases; ``maximal trees'' there is an
extremal class, not all trees.

Graph-building and rooted-product methods supply additional families
\cite{Mandrescu2009,BZhuClique2016,BZhuWang2020,ZhuZhu2023,ZhuZhu2024,LiuTangZhao2026}.
Their hypotheses concern the chosen building blocks and rooted states;
none licenses arbitrary tree gluing. Bendjeddou and Hardiman
\cite{BendjeddouHardiman} give Lorentzian certificates for specified
graph constructions. More recently, Li, Li, Yang,
and Zhang \cite{LLYZ2026} prove LC for \emph{all spiders}, that is,
trees with at most one vertex of degree greater than two, by a
chromatic-symmetric-function method. Grace Li
\cite[Theorems~1.4--1.5]{LiKL2026} proves unimodality for both
$T_{3,m,n}$ and $T^*_{3,m,n}$ with $m,n\ge1$, the two source-defined
families containing the original tree LC counterexamples.
That positive result is important context: an LC failure within such
a family does not constitute a unimodality failure. It is not cited
here as a general BL or all-forest theorem.

\paragraph{Random trees and finite evidence.}
Heilman \cite{Heilman2025} obtains high-probability partial unimodality
for random trees and separate sparse-random-graph models. For a
uniform random labeled tree, the initial range is approximately
$46.8\%$ of the nonzero sequence. Basit and Galvin
\cite[Theorems~4 and 7]{BG} obtain, asymptotically almost surely,
ascent through about $0.280n$ and descent from about $0.347n$.
These are fractions of \emph{vertex order}, not $28\%$ and $34.7\%$
of the nonzero coefficient support. Both projects circulated in 2020;
Heilman's 2025 journal date is not the date of a later improvement
over the 2021 Basit--Galvin paper. High-probability statements do not
replace a theorem for every tree.

Yosef, Mizrachi, and Kadrawi \cite{YMK2022} verified LC through order
20. The stronger computation through order 25 reported in
\cite{BG,Ball}, and the order-26 calculation and exceptions in
\cite{KLYM,KL26}, have different scopes. Those finite records must
not be merged with random-tree results, with theorems for restricted
infinite classes, or with the recent eventual and all-forest
unimodality arguments \cite{FLNYZ,ZhangLi2026,LVZ}.

\paragraph{Complementary dependence polynomials.}
For a graph $G$ of order $n$ containing at least one edge, the
\emph{dependence polynomial}
\[
 D_{\mathrm{dep}}(G;x)=(1+x)^n-I(G;x)
\]
counts vertex sets whose induced subgraph contains an edge. Horrocks
\cite{Horrocks2002} proved that these counts are log-concave for every graph.
Xie and Xu \cite[Theorems~1.2--1.3]{XieXu2026} prove ultra log-concavity
when $G$ is $(K_2\sqcup2K_1)$-free or has an independent set of size
$n-2$, and characterize the real-rooted dependence polynomials. Here
ultra log-concavity uses normalization by $\binom nk$ at the ambient
order $n$; the zero coefficients at ranks zero and one are retained.
Complementarity does not transfer these conclusions to $I(G;x)$.
Indeed, let $G=K_6\vee6K_1$, where $\vee$ adds every edge between the
two disjoint parts. Every edge has a universal endpoint, so $G$ is
$(K_2\sqcup2K_1)$-free and its dependence polynomial is ultra
log-concave by the cited theorem. Nevertheless,
$I(G;x)=(1+x)^6+6x$ has $(i_1,i_2,i_3)=(12,15,20)$, giving
$\Delta_2=15^2-12\cdot20=-15$ at an ascent. This general-graph
example is not a forest counterexample. Dependence counts are also
distinct from domination counts.

Theorem~\ref{cor:barrier} controls all ascents of every sufficiently
large forest; Theorem~\ref{thm:main} supplies the stronger prefix
that proves it. The
fixed-density endpoint and the classical tail remain credited above.
The forest-corona consequence in Section~\ref{app:corona} uses the
complete transfer proved there, rather than any claim that all
well-covered graphs or all single-leaf coronas are already LC.
The historical results in this subsection are context, not additional
premises of the central-curvature proof.

\section{Forest extension bounds and hard-core estimates}\label{sec:smallrank}
At physical activity $\lambda>0$, choose an independent set $I$ with
probability proportional to $\lambda^{|I|}$ and put
\[
 Z_F(\lambda)=I(F;\lambda),\quad X=|I|,\quad
 p_j=\frac{i_j(F)\lambda^j}{Z_F(\lambda)},\quad
 \eta_F(\lambda)=\E_\lambda X,\quad \sigma^2=\Var_\lambda X,\quad \nu=n\lambda.
\]
Write $\eta=\eta_F(\lambda)$. The activity mass $\nu$ is not the matching
number $\mu(F)$. Division by $\sigma$ always concerns a nonempty forest.
Conditioning the common-activity law on cardinality gives the uniform
independent-set law at that rank. A uniformly sampled extension incidence
instead size-biases its predecessor by the number of legal extensions;
that law is not substituted into the rank averages below.

\subsection{An elementary quadratic bound}
The extension statistic is standard \cite{BG,FLNYZ}; we include the
needed argument so that no finite certificate is an input here.
Let $\I_r(F)$ be the family of independent $r$-sets. Put
\[
 m_r=(r+1)\frac{i_{r+1}}{i_r}\quad(0\le r<\alpha(F)),
 \qquad m_{\alpha(F)}=0.
\]
\begin{lemma}\label{lem:extension}
For every forest, $n-3r\le m_r\le n-r$ and
$m_{r+1}\ge m_r-3$ for $0\le r<\alpha(F)$. At every internal index,
\[
 n\ge2j^2-4\quad\Longrightarrow\quad\Delta_j>0.
\]
\end{lemma}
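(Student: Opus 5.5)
The plan is to read $m_r$ as an average number of legal extensions, prove the increment bound $m_{r+1}\ge m_r-3$ by double counting, and then convert the two resulting bounds on $m_r$ into the determinant condition. For an independent $r$-set $S$, let $E(S)=V(F)\setminus N_F[S]$ be its set of legal extensions and $e_S=|E(S)|$. Counting pairs $(S,v)$ with $S\in\I_r(F)$ and $v\in E(S)$ counts each independent $(r+1)$-set exactly $r+1$ times. Hence
\[
 m_r=\frac{1}{i_r}\sum_{S\in\I_r(F)}e_S ,
\]
so $m_r$ is the mean of $e_S$ under the uniform law on $\I_r(F)$; note that this is the rank-conditioned law, not a size-biased one. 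Since $|N_F[S]|\ge r$, we get $e_S\le n-r$, which is the upper bound $m_r\le n-r$. Because $m_0=n$, the lower bound $m_r\ge n-3r$ follows by induction once the increment bound is proved.

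For the increment bound, write each $T\in\I_{r+1}$ as $T=S\cup\{v\}$ with $v\in E(S)$. Then
\[
 e_T=e_S-1-\deg_{F[E(S)]}(v).
\]
Summing over all pairs $(S,v)$ gives
\[
 (r+1)\sum_{T}e_T=\sum_{S}\bigl(e_S^2-e_S-2\,|E(F[E(S)])|\bigr)\ge\sum_S\bigl(e_S^2-3e_S\bigr),
\]
where the inequality holds because the induced subforest $F[E(S)]$ has fewer than $e_S$ edges (or none at all when $e_S=0$). We also have $(r+1)i_{r+1}=\sum_S e_S$, and $m_{r+1}=\sum_T e_T/i_{r+1}$ by the first paragraph applied at rank $r+1$. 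Combining these,
\[
 m_{r+1}\ge\frac{\sum_S e_S^2}{\sum_S e_S}-3\ge\frac{\sum_S e_S}{i_r}-3=m_r-3,
\]
where the second inequality is Cauchy--Schwarz. In the degenerate case $\sum_S e_S=0$ we have $r=\alpha(F)$, which is excluded.

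For the determinant, $\Delta_j>0$ is equivalent to $m_j/(j+1)<m_{j-1}/j$, that is, to $j\,m_j<(j+1)\,m_{j-1}$. Inserting $m_j\le n-j$ and $m_{j-1}\ge n-3j+3$, the crude bounds already give $\Delta_j>0$ whenever $n>2j^2-3$. So the plan is to find one extra unit of slack to reach the stated threshold $n\ge2j^2-4$, and I expect this to be the main obstacle. If $F$ is edgeless, the sequence consists of binomial coefficients, which are strictly log-concave, so we may assume $F$ has an edge. I would first try to retain the dropped terms. On the lower side, $|E(F[E(S)])|\le e_S-c_S$, where $c_S$ is the number of components of $F[E(S)]$; this improves each increment step by $2\,\E[c_S]$, and the first step is explicitly $m_1=n-1-2|E(F)|/n$. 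On the upper side, $m_j\le n-j-\E|N_F(S)|$. The aim is to show that $j$ times the upper slack plus $(j+1)$ times the accumulated lower slack exceeds $1$, using that $j<\alpha(F)$ guarantees a uniform $j$-set with nonempty extension set. If this bookkeeping fails, the fallback is to handle the borderline cases $2j^2-4\le n\le2j^2-3$ separately. In those cases $j$ is close to $\sqrt{n/2}$, far below $\alpha(F)\ge n/2$ when $F$ has no isolated vertices, and the improved first increment $m_1-m_0>-3$ should carry through the chain.
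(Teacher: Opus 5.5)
Your proof of the first sentence is correct and is essentially the paper's double counting. The identity $\sum_{v\in E(S)}e_{S\cup\{v\}}=2i_2(F[E(S)])$ is the paper's identity $m_rm_{r+1}=2\E i_2(F-N[A])$, and Cauchy--Schwarz suffices for $m_{r+1}\ge m_r-3$.

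The gap is in the determinant claim. Your bounds give $\Delta_j>0$ only for $n\ge2j^2-2$. For $j\ge2$, the orders $n=2j^2-4$ and $n=2j^2-3$ are left to a plan that is not carried out. The missing slack is exactly the constant you threw away when you replaced $e_S^2-e_S-2|E(F[E(S)])|$ by $e_S^2-3e_S$. A forest on $d\ge1$ vertices has at most $d-1$ edges, so each $S$ contributes at least $g(e_S)$, where $g(d)=(d-1)(d-2)$ for $d\ge2$ and $g=0$ on $[0,2]$; this also covers $e_S=0$. Since $g$ is convex, Jensen gives $m_rm_{r+1}\ge g(m_r)$, that is,
\[
 m_{r+1}\ge m_r-3+\frac{2}{m_r}\qquad(m_r\ge2).
\]
For $j\ge2$, $n\ge2j^2-4$ and $0\le r\le j-2$, one has $4\le2j^2-3j+2\le n-3r\le m_r\le n$. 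Summing the steps therefore gives $m_{j-1}\ge n-3j+3+2(j-1)/n$. The condition $j(n-j)<(j+1)m_{j-1}$ then reduces to $n^2+(3-2j^2)n+2(j^2-1)>0$. At $n=2j^2-4+u$ this reads $u^2+(2j^2-5)u+2>0$, which holds for $u\ge0$.

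The repairs you sketch do not supply this as written:
\begin{itemize}
\item Retaining components changes the step by $2\sum_Sc_S/\sum_Se_S=2\E[c_S]/m_r$, not by $2\E[c_S]$. Each gain is therefore only of order $1/n$.
\item If you then bound $\E e_S^2$ by Cauchy--Schwarz separately, the only uniform information is $c_S\ge1$ on $\{e_S\ge1\}$, which gives a gain of $2\Prob(e_S\ge1)/m_r$. You would still need to show that independent dominating $r$-sets are rare. The hypothesis $j<\alpha(F)$ only says that some $S$ extends, not that all do. Jensen applied to the convex $g$ avoids this issue entirely.
\item Your fallback uses only the improved first increment $m_1\ge n-3+2/n$. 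At $n=2j^2-4$ it contributes $2(j+1)/n=(j+1)/(j^2-2)$, which is below the required $1$ once $j\ge3$.
\end{itemize}
What is needed is the gain $2/m_r\ge2/n$ at each of the $j-1$ steps. Then $(j+1)\cdot2(j-1)/n=2(j^2-1)/(2j^2-4)>1$, and no upper-side slack is required.
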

\begin{proof}
For uniform $A\in\I_r(F)$, write $D_r=|F-N[A]|$.
One- and two-extension counting give
\[
 m_r=\E D_r,\qquad
 m_rm_{r+1}=2\E i_2(F-N[A]).
\]
A forest of order $d$ satisfies $2i_2\ge g(d)$, where $g$ is zero on
$[0,2]$ and $g(d)=(d-1)(d-2)$ on $[2,\infty)$. This function is convex,
so Jensen gives
\[
 m_{r+1}\ge m_r-3+\frac2{m_r}\qquad(m_r\ge2).
\]
If $0<m_r<2$, the weaker inequality $m_{r+1}\ge m_r-3$ is automatic.
Since $m_0=n$, induction proves $m_r\ge n-3r$, including the terminal
value. The bound $m_r\le n-r$ follows by counting available vertices.

Rank one is immediate from $i_1=n$ and $i_2\le\binom n2$.
For $j\ge2$ and $n\ge2j^2-4$, the preceding means used in the induction
are at least two: for $0\le r\le j-2$,
$n-3r\ge2j^2-3j+2\ge4$.
Consequently,
\[
 \frac{i_j}{i_{j-1}}\ge
 \frac{n-3j+3+2(j-1)/n}{j},\qquad
 \frac{i_{j+1}}{i_j}\le\frac{n-j}{j+1}.
\]
The difference of these bounds has numerator
$n^2+(3-2j^2)n+2(j^2-1)$ over $j(j+1)n$.
Putting $n=2j^2-4+u$ makes the numerator
$2+(2j^2-5)u+u^2>0$.
\end{proof}

\begin{lemma}[Mean bounds]\label{lem:mean}
For every nonempty forest and every $\lambda>0$,
\[
 \frac{n\lambda}{1+3\lambda}\le\eta_F(\lambda)
 \le\frac{n\lambda}{1+\lambda}\le n\lambda.
\]
\end{lemma}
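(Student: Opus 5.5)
We need the bounds $n\lambda/(1+3\lambda)\le\eta\le n\lambda/(1+\lambda)$. The plan is to write $\eta$ as a sum of single-vertex occupation probabilities and bound each one through the neighbourhood that remains free once the other occupied vertices are fixed.

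\emph{Upper bound.} Write $\eta=\sum_v \Prob_\lambda(v\in I)$. For each vertex $v$, pair every independent set $A$ containing $v$ with $A\setminus\{v\}$; this map is injective. Its images are exactly the independent sets $B$ that avoid $v$ and whose union with $v$ stays independent. Therefore
\[
 \Prob(v\in I)=\lambda\,\Prob\bigl(v\notin I,\ N(v)\cap I=\emptyset\bigr)\le\lambda\bigl(1-\Prob(v\in I)\bigr),
\]
so $\Prob(v\in I)\le\lambda/(1+\lambda)$. Summing over $v$ gives $\eta\le n\lambda/(1+\lambda)$, and the final inequality $n\lambda/(1+\lambda)\le n\lambda$ is trivial. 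This half holds for every graph.

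\emph{Lower bound.} We need $\sum_v \Prob(v\in I)\ge n\lambda/(1+3\lambda)$. The cleanest route is to mirror Lemma~\ref{lem:extension}, rather than working with individual vertices. Let $D=|F-N[I]|$ be the number of vertices that can be added to $I$. Counting pairs ($I$, a legal extension vertex) against pairs ($J$, a vertex removed from $J$) gives $\lambda\E D=\E X=\eta$. By the extension part of Lemma~\ref{lem:extension}, conditionally on $|I|=r$ the law of $I$ is uniform on $\I_r(F)$, so $\E[D\mid X=r]=m_r\ge n-3r$. Averaging over $r$ gives $\E D\ge n-3\eta$. Hence
\[
 \eta=\lambda\E D\ge\lambda(n-3\eta),\qquad\text{that is,}\qquad \eta\ge\frac{n\lambda}{1+3\lambda}.
\]
The same identity also gives the upper bound, since $D\le n-X$ yields $\eta\le\lambda(n-\eta)$.

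\emph{Checks.} The main point to verify is that $m_r\ge n-3r$ holds at every rank, including $r=\alpha(F)$, where $m_{\alpha}=0$ and $\alpha\ge n/3$ would be needed. Lemma~\ref{lem:extension} states this terminal case explicitly, but I would confirm it independently. A forest is bipartite, so $\alpha\ge n/2\ge n/3$, and $D=0$ is consistent with the bound $n-3\alpha\le0$. One also needs the identity $\E[D\mid X=r]=m_r$, which holds because conditioning the common-activity law on cardinality gives the uniform law on $\I_r(F)$, as noted in the setup. Everything else is linear algebra on expectations.
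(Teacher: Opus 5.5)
Your proof is correct and is essentially the paper's argument: the paper also combines the weighted extension identity $\lambda\E m_X=\E X=\eta$ with the uniform conditional law at fixed rank, then averages the bounds $n-3r\le m_r\le n-r$ from Lemma~\ref{lem:extension}. Your per-vertex bound $\Prob(v\in I)\le\lambda/(1+\lambda)$ is a valid alternative for the upper half that works for every graph, and your check of the terminal rank via $\alpha\ge n/2$ independently confirms a case the lemma already covers.
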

\begin{proof}
Conditioning a hard-core independent set on its size $r$ gives the
uniform law on $\I_r(F)$. Weighted extension counting gives
$\lambda\E m_X=\E X=\eta$. Averaging both bounds in
Lemma~\ref{lem:extension} and rearranging proves the claim.
\end{proof}

\subsection{Bipartition damping}
\begin{lemma}[Bipartition variance and damping]\label{lem:bipartition}
For a nonempty forest and $0<\lambda\le1$,
\[
 \sigma^2\ge\frac{n\lambda}{2(1+\lambda)^4},\qquad
 |\E_\lambda e^{\ii sX}|\le
 \exp\left\{-\frac{n\lambda}{2(1+\lambda)^3}\sin^2(s/2)\right\}
 \quad(s\in\mathbb R).
\]
\end{lemma}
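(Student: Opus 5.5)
Fix a proper $2$-colouring $V(F)=A\sqcup B$ of the forest with $|A|\ge n/2$; each tree component contributes at least half its vertices to the larger colour class. The plan is to condition on $I\cap B$. Given $I\cap B=S$, the set $I\cap A$ is a product-measure sample: each vertex of $A\setminus N(S)$ is included independently with probability $p=\lambda/(1+\lambda)$, and no other vertex of $A$ is included. Write $Y=|A\setminus N(S)|$ for the number of free vertices of $A$. Then $X=|S|+\mathrm{Bin}(Y,p)$ conditionally. This gives two consequences:
\[
 \sigma^2\ge\E\,\Var(X\mid S)=p(1-p)\,\E Y,\qquad
 |\E e^{\ii sX}|\le\E\,|1-p+pe^{\ii s}|^{Y}.
\]
We also have
\[
 |1-p+pe^{\ii s}|^2=1-4p(1-p)\sin^2(s/2)\le\exp\{-4p(1-p)\sin^2(s/2)\}.
\]

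Everything therefore reduces to a lower bound on $Y$. The key step is to show $\Prob(a\text{ is free})\ge(1+\lambda)^{-1}$ for each $a\in A$, or a bound of comparable strength. For this we use the occupation-odds recursion on a tree rooted at $a$. Conditioning on the configuration outside $N[a]$ shows that $\Prob(N(a)\cap I=\emptyset)$ equals the probability that $a$ is unoccupied in a suitable modified model, and this is at least $1/(1+\lambda)$. Indeed, the hard-core marginal of any single vertex satisfies $\Prob(a\in I)\le\lambda/(1+\lambda)$, and "all neighbours of $a$ are empty" contains the event that $a$ is chosen after resampling $a$. More precisely, $\Prob(a\in I)=\frac{\lambda}{1+\lambda}\Prob(N(a)\cap I=\emptyset)$, so $\Prob(N(a)\cap I=\emptyset)=\frac{1+\lambda}{\lambda}\Prob(a\in I)$. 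Lemma~\ref{lem:mean}-type bounds on the individual marginals then give $\Prob(a\in I)\ge\lambda/(1+3\lambda)$ or similar. I expect a cleaner route, with constants matching the statement, to be the pointwise bound $\Prob(a\text{ free})\ge 1/(1+\lambda)$. This follows from $\Prob(a\in I)\ge\lambda/(1+\lambda)^2$, which holds on forests because each neighbour $b$ is occupied with probability at most $\lambda/(1+\lambda)$ while the children are conditionally independent in a tree.

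We then sum over $a\in A$ to get $\E Y\ge |A|/(1+\lambda)\ge n/(2(1+\lambda))$. Combined with $p(1-p)=\lambda/(1+\lambda)^2$, this gives the variance bound $\sigma^2\ge n\lambda/(2(1+\lambda)^3)$, which is stronger than claimed. If the free-probability bound only reaches $(1+\lambda)^{-2}$, it still yields exactly the claimed $n\lambda/(2(1+\lambda)^4)$, so there is slack to spare.

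For the damping bound, we apply Jensen in the form $\E e^{-cY}\le$ \dots. This goes the wrong way, so instead we bound $|\E e^{\ii sX}|\le\E\exp\{-2p(1-p)\sin^2(s/2)Y\}$ and control the lower tail of $Y$. The main obstacle is that $Y$ need not concentrate: for a star centred in $B$, $Y$ is either $0$ or everything. To handle this, I would choose the bipartition cleverly, or better, use a \emph{per-component} argument. The idea is to replace the single class $A$ by a set $M$ of vertices such that, conditionally on $I\setminus M$, the vertices of $M$ are independently free with probability at least $1/(1+\lambda)$ each. For example, take $M$ to be the leaves or the deeper colour class of each rooted tree, with conditioning on all non-$M$ vertices. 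For each $a\in M$ we have
\[
 \E\bigl[|1-p+pe^{\ii s}|^{\mathbf 1\{a\text{ free}\}}\bigm|\text{rest}\bigr]\le 1-\tfrac{1}{1+\lambda}\bigl(1-|1-p+pe^{\ii s}|\bigr).
\]
Multiplying these conditional factors in a sequential (martingale) order, after revealing vertices root-to-leaves in each tree so that each step's freeness probability is bounded below by $1/(1+\lambda)$ given the past, gives $\exp\{-\frac{n}{2}\cdot\frac{1}{1+\lambda}\cdot\frac{2\lambda}{(1+\lambda)^2}\sin^2(s/2)\}$. The stated exponent follows from this using $1-\sqrt{1-x}\ge x/2$. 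Establishing the conditional lower bound $1/(1+\lambda)$ at each revelation step is the delicate point I expect to require the most care.
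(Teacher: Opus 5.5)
\textbf{There is a genuine gap.} Several of your steps are correct and match the paper's starting point: conditioning on one colour class, the bound $\sigma^2\ge p(1-p)\E Y$, the bound $|\E e^{\ii sX}|\le\E|1-p+pe^{\ii s}|^Y$, and the identity $|1-p+pe^{\ii s}|^2=1-4p(1-p)\sin^2(s/2)$.

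\textbf{The pointwise free bound is false.} The bound you build on, $\Prob(a\text{ free})\ge(1+\lambda)^{-1}$, fails for vertices of large degree, even in the larger class. So does the fallback $(1+\lambda)^{-2}$. Take the spider with centre $c$ and $d$ legs of length two. Then $c$ lies in the larger colour class, and $\Prob(c\text{ free})=(1+\lambda)^{d+1}/\bigl(\lambda(1+\lambda)^d+(1+2\lambda)^d\bigr)\to0$ as $d\to\infty$. Your own justification (each neighbour is occupied with probability at most $\lambda/(1+\lambda)$, also after conditioning other vertices absent) yields only $\Prob(a\text{ free})\ge(1+\lambda)^{-d(a)}$. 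Lemma~\ref{lem:mean} is an average statement and says nothing about individual marginals.

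\textbf{The variance part is repairable, and the repair is the paper's.} Sum $(1+\lambda)^{-d(a)}$ over the class and use convexity with $\sum_{a\in A}d(a)=|E(F)|<n$ and $|A|\ge n/2$. This gives $\E Y\ge n/(2(1+\lambda)^2)$, hence exactly $n\lambda/(2(1+\lambda)^4)$, not the stronger bound you announce.

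\textbf{The damping part fails as described.} You rightly note that $Y$ need not concentrate and that Jensen runs the wrong way. But your sequential fix does not work, because freeness indicators share neighbours in $B$. Once a $B$-vertex is revealed occupied, every not-yet-processed $A$-neighbour of it has conditional freeness probability $0$. So no uniform positive lower bound given the past exists, let alone $1/(1+\lambda)$. The variant ``conditionally on $I\setminus M$'' is degenerate: for independent $M$, freeness of $a\in M$ is already a function of $I\setminus M$.

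\textbf{The missing idea.} Bound the exponential moment of $Y$ without any concentration, by interpolating the activity on $A$:
\begin{itemize}
\item Put $h=1-\frac{\lambda}{1+\lambda}\sin^2(s/2)$, which dominates $|1-p+pe^{\ii s}|$ when $\lambda\le1$.
\item Let $Z(u,\lambda)$ be the partition function with activity $u$ on $A$ and $\lambda$ on $B$. Then $\E h^Y=Z(\tau,\lambda)/Z(\lambda,\lambda)$, where $1+\tau=(1+\lambda)h$, i.e.\ $\tau=\lambda\cos^2(s/2)$.
\item Moreover $\frac{\mathrm d}{\mathrm du}\log Z(u,\lambda)=\E_uY/(1+u)\ge A_L/(1+u)$, with $A_L=\sum_{a\in A}(1+\lambda)^{-d(a)}$. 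This holds because the degree-dependent free bound is valid at every $u$: the neighbours of $A$-vertices keep activity $\lambda$.
\item Integrating over $[\tau,\lambda]$ gives $\E h^Y\le h^{A_L}$. Then $\log h\le h-1$ and $A_L\ge n/(2(1+\lambda)^2)$ give exactly the stated exponent $\frac{n\lambda}{2(1+\lambda)^3}\sin^2(s/2)$.
\end{itemize}
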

\begin{proof}
For a bipartition $L\sqcup R$, condition on $I\cap R$. Each available
vertex of $L$ is occupied independently with probability
$\lambda/(1+\lambda)$. Total variance, averaged over the two possible
conditioning classes, gives
\[
 \sigma^2\ge\frac{\lambda}{2(1+\lambda)^2}
       \sum_{v\in V(F)}\Prob(N(v)\cap I=\varnothing).
\]
A vertex has occupation probability at most $\lambda/(1+\lambda)$,
also after conditioning specified other vertices to be absent. Hence
sequential absence conditioning gives
$\Prob(N(v)\cap I=\varnothing)\ge(1+\lambda)^{-d(v)}$.
Jensen's inequality and the forest average-degree bound prove the
variance claim. This is the activity-dependent version of
\cite[(5.3)]{FLNYZ}.

For damping, choose $|L|\ge n/2$ and let $W_L$ be the number of available
vertices of $L$ after conditioning on $R$. Conditional independence gives
\[
 \left|\E(e^{\ii sX}\mid I\cap R)\right|
 \le\left(1-\frac{4\lambda}{(1+\lambda)^2}\sin^2(s/2)\right)^{W_L/2}
 \le h^{W_L},\qquad
 h=1-\frac{\lambda}{1+\lambda}\sin^2(s/2).
\]
The last inequality uses $\lambda\le1$. Write $Z(u,\lambda)$ for the
partition function with activity $u$ on $L$ and $\lambda$ on $R$, and
put $\tau=(1+\lambda)h-1=\lambda\cos^2(s/2)$. Then
\[
 \E h^{W_L}=\frac{Z(\tau,\lambda)}{Z(\lambda,\lambda)},\qquad
 \frac{\mathrm d}{\mathrm du}\log Z(u,\lambda)
 \ge\frac1{1+u}\sum_{v\in L}(1+\lambda)^{-d(v)}.
\]
Integrating from $\tau$ to $\lambda$, using continuity at zero, yields
$\E h^{W_L}\le h^{A_L}$, where
$A_L=\sum_{v\in L}(1+\lambda)^{-d(v)}$.
Since $\sum_{v\in L}d(v)=|E(F)|\le n$ and $|L|\ge n/2$,
Jensen gives $A_L\ge n/[2(1+\lambda)^2]$. Now use
$|\E e^{\ii sX}|\le\E h^{W_L}$ and $\log h\le-(1-h)$.
This preserves the activity dependence in the argument of
\cite[Lemma~6.1]{FLNYZ}.
\end{proof}

\section{Uniform coefficient curvature}
\label{sec:centralcurvature}
\begin{theorem}[Uniform central curvature through activity one]
\label{central:thm:central}
For every forest, if $0<\lambda\le1$, $\nu=n\lambda\ge2^{104}$, and
$\eta=k$ is an integer, then
\[
 \left|\sigma^3(2p_k-p_{k-1}-p_{k+1})-
                    \frac1{\sqrt{2\pi}}\right|<\frac{39}{100}.
\]
In particular, the central second difference is strictly positive, and
$\Delta_k(F)>0$.
\end{theorem}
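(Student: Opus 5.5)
We prove the estimate by Fourier inversion of the second difference at the mean, with a local Edgeworth-type expansion. Throughout put $\phi(s)=\E_\lambda e^{\ii s(X-k)}$. Since $X$ is integer valued,
\[
 2p_k-p_{k-1}-p_{k+1}=\frac1{2\pi}\int_{-\pi}^{\pi}\phi(s)\,(2-2\cos s)\,\mathrm ds
 =\frac{2}{\pi}\int_{-\pi}^{\pi}\phi(s)\sin^2(s/2)\,\mathrm ds .
\]
If $\phi(s)$ were $e^{-\sigma^2s^2/2}$, the integral would equal $(2/\pi)\int (s^2/4)e^{-\sigma^2s^2/2}\,\mathrm ds=\sigma^{-3}/\sqrt{2\pi}$, up to tail terms. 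The plan is therefore to show that replacing $\phi$ by the Gaussian, on the scale $\sigma^{-3}$, costs strictly less than $0.39\,\sigma^{-3}$ in total.

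We split the integral at $|s|=s_0$, with $s_0=c/\sigma$ times a slowly growing factor, chosen so that the Gaussian mass beyond $s_0$ is negligible.

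\emph{Outer region.} On $|s|>s_0$ we apply Lemma~\ref{lem:bipartition}: $|\phi(s)|\le\exp\{-\nu\sin^2(s/2)/16\}$ for $\lambda\le1$. Together with the weight $\sin^2(s/2)$, this gives a contribution of order $\nu^{-3/2}$ times $e^{-c\nu s_0^2}$. The variance bound $\sigma^2\ge\nu/32$ from the same lemma, and $\sigma^2\le\eta\le\nu$ from Lemma~\ref{lem:mean} (variance at most mean can be shown via the forest recursion or an extension count), make $\sigma^2\asymp\nu$. Hence this tail is a tiny multiple of $\sigma^{-3}$ once $\nu\ge2^{104}$.

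\emph{Inner region.} On $|s|\le s_0$ we write $\log\phi(s)=-\sigma^2s^2/2+R(s)$ with $|R(s)|\le \kappa_3|s|^3/6$, where $\kappa_3$ bounds the third cumulant of $X$ (using $\E X=k$, so there is no linear term). The error in the inner integral is then bounded by
\[
 \int s^2\,e^{-\sigma^2s^2/2}\bigl(e^{|R|}-1\bigr)\,\mathrm ds\lesssim \kappa_3\,\sigma^{-6}\cdot\sigma^{0},
\]
which is $o(\sigma^{-3})$ provided $\kappa_3=O(\sigma^2)$. The leading cubic term is odd in $s$, so it integrates to zero against the even weight. This leaves only the $|s|^4$ remainder and the $R^2$ terms, which require a fourth-cumulant (or at least absolute-fourth-moment-type) bound $O(\sigma^2)$.

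\textbf{Main obstacle.} The crux is proving $|\kappa_3|,|\kappa_4|\le C\sigma^2$ with $C$ absolute, uniformly in the maximum degree and for $\lambda\to0$. Tree correlations decay only through occupation-odds contraction, which degenerates at high degree. The tool I would use is the root/occupation-odds recursion: the derivatives $\lambda\,\mathrm d/\mathrm d\lambda$ of the log partition function give the cumulants. Writing each successive cumulant as a sum over vertices of responses, and bounding the response operator in a weighted norm with weights $\lambda/(1+\lambda)$ times the occupation probabilities, one aims to show that each further derivative multiplies by a bounded factor. Bounded means independent of degree, because at a high-degree vertex the occupation probability is itself damped by $(1+\lambda)^{-d}$. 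If this direct operator-norm estimate fails at high degree, the fallback is the predictable-clock idea: reveal the tree vertex by vertex, represent $X-\E X$ as a martingale with conditional variances summing to $\sigma^2$, and control third and fourth moments by Burkholder-type inequalities on the increments, which are bounded by one per revealed vertex.

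Finally, the explicit constants are combined so that the inner error plus the outer tail is below $0.39/\sqrt{2\pi}$ relative to $\sigma^{-3}$; this is where the crude threshold $2^{104}$ is spent. Since $1/\sqrt{2\pi}>0.39$, the second difference is positive, so $2p_k>p_{k-1}+p_{k+1}\ge2\sqrt{p_{k-1}p_{k+1}}$. Substituting $p_j=i_j\lambda^j/Z$ gives $i_k^2>i_{k-1}i_{k+1}$, that is, $\Delta_k(F)>0$.
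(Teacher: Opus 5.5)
\textbf{Verdict: there is a genuine gap in the inner region.} Your inversion formula, the Gaussian main term, the outer region via Lemma~\ref{lem:bipartition}, and the closing AM--GM step are correct and match the paper.

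The bound $|R(s)|\le\kappa_3|s|^3/6$ is not what Taylor's theorem gives. The remainder is controlled by $\sup_{|u|\le|s|}|(\log\phi)'''(u)|$. For $u\neq0$ this is a third logarithmic derivative of $Z_F$ at the \emph{complex} activity $\lambda e^{\ii u}$, not the third cumulant at $\lambda$. Before that, $\log\phi$ is only defined if $Z_F(\lambda e^{\ii u})\neq0$ for $|u|\le s_0$.

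So the crux you name, $|\kappa_3|,|\kappa_4|\le C\sigma^2$ at the real activity, would not close the argument even if proved. You would need a zero-free arc with quantitative log-derivative bounds, uniform in the maximum degree and as $\lambda\to0$. The proposal offers no mechanism for that, and the paper avoids complex activities entirely. Expanding $\phi$ instead of $\log\phi$ does not help: a single-block Taylor bound has error of order $(\sigma|s|)^3$, which is useless on $|s|\asymp1/\sigma$, where the main term lives. The parity and fourth-cumulant step is also unnecessary, since an unsigned cubic error already contributes $O(\kappa_3\sigma^{-6})=o(\sigma^{-3})$.

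Your fallback points toward the paper's route, but as stated it fails and omits the decisive estimates.
\begin{itemize}
\item The increments of the parent-before-child martingale are $d_v\zeta_v$, and the responses are unbounded. A vertex whose $m$ children are leaves has $d_v=1-m\lambda/(1+\lambda)$.
\item Burkholder-type moment bounds cannot exclude a Gaussian variance mixture, so they give no characteristic-function comparison at scale $1/\sigma$. What is needed is concentration of the predictable quadratic variation $A$.
\end{itemize}

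The paper supplies exactly this, in three pieces.
\begin{itemize}
\item A comparison lemma that stops the clock at $\sigma^2$ and completes it with an independent Gaussian. Its error is $\tfrac{t^2}{2}\E|A-\sigma^2|+C|t|^3\sum_v\E|d_v\zeta_v|^3$.
\item The degree-uniform cubic bound $\sum_v\E|d_v\zeta_v|^3\le125\sum_vb_v$. This comes from $\|\mathsf M\|_{3,b}<4/5$ with weights $b_v=q_v(1-q_v)$.
\item The clock bound $\E|A-\sigma^2|^{3/2}\le250\sum_vb_v$. This comes from the exact identity $A=\sum_vb_vd_v^2-\sum_vh_vJ_v$, with $h_v=\sum_{u\text{ child of }v}b_ud_u^2$, together with $\|\mathsf M\|_{3/2,b}<1/2$.
\end{itemize}

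You also need an upper bound $\sigma^2=O(\nu)$ to turn $\nu$-damping into damping on the scale $1/\sigma$. Your asserted $\sigma^2\le\eta$ is not proved. The paper instead proves $\sigma^2\le25\nu/(4(1+\lambda)^2)$ from $\|\mathsf M\|_{2,b}<3/5$.

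Without these estimates there are no explicit constants, so the claimed $39/100$ at $\nu\ge2^{104}$ is not established.
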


The next operator bounds allow unequal vertex activities. We then
specialize to a common activity to control the characteristic function
and its Fourier second difference. All estimates in this section are
proved here; the arithmetic certificates check only their explicit
constant comparisons.

For a rooted forest with vertex activities $0<\lambda_v\le1$, let $q_v$
be the occupation probability at $v$ in its descendant subtree, with its
parent, when present, conditioned to be vacant. Put $b_v=q_v(1-q_v)$ and let
\[
 (\mathsf{M}z)_v=\sum_{u\text{ child of }v}q_u z_u,
 \qquad \|z\|_{p,b}^p=\sum_v b_v|z_v|^p.
\]
We use the same subscript for the corresponding induced operator norm.
The hard-core recursion gives
\[
 \frac{q_v}{1-q_v}=\lambda_v\prod_{u\text{ child of }v}(1-q_u),
 \qquad 0<q_v\le\frac12.
\]

\begin{lemma}[Response operator bounds]\label{lem:cubic-response-norms}
For every finite rooted forest with vertex activities
$0<\lambda_v\le1$, the following bounds hold:
\[
 \|\mathsf{M}\|_{3/2,b}<\frac12,
 \qquad \|\mathsf{M}\|_{2,b}\le\sqrt{\frac8{25}}<\frac35,
 \qquad \|\mathsf{M}\|_{3,b}\le2^{-1/3}<\frac45.
\]
\end{lemma}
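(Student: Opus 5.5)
The plan is to reduce each bound to an inequality at a single parent vertex, using Hölder's inequality, and then to optimize that inequality with the hard-core recursion. Every vertex is the child of at most one parent, so for $1<p<\infty$ and $p'=p/(p-1)$, Hölder's inequality over the children $u$ of a fixed vertex $v$ gives
\[
 \Bigl|\sum_{u}q_uz_u\Bigr|^p\le
 \Bigl(\sum_u q_u^{p'}b_u^{-p'/p}\Bigr)^{p-1}\sum_u b_u|z_u|^p .
\]
Since $p'(1-1/p)=1$ and $p'/p=1/(p-1)$, the summand simplifies to $q_u(1-q_u)^{-1/(p-1)}$. Multiplying by $b_v$ and summing over $v$, each child weight $b_u|z_u|^p$ appears exactly once. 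Hence $\|\mathsf M\|_{p,b}^p\le\sup_v K_p(v)$, where
\[
 K_p(v)=b_v\Bigl(\sum_{u\text{ child of }v}q_u(1-q_u)^{-a}\Bigr)^{p-1},\qquad a=\frac1{p-1}.
\]
The exponent is $a=2,1,\tfrac12$ for $p=\tfrac32,2,3$, respectively. Leaves contribute nothing, so only vertices with children need to be checked.

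Next I would eliminate $v$'s own data. Put $x_u=-\log(1-q_u)\in(0,\log2]$, using $q_u\le\tfrac12$, and put $S=\sum_u x_u$, so that $P:=\prod_u(1-q_u)=e^{-S}$. The recursion with $\lambda_v\le1$ gives $q_v/(1-q_v)\le P$. Since $t\mapsto t/(1+t)^2$ is increasing on $[0,1]$, this yields $b_v\le P/(1+P)^2$. Each summand equals $f_a(x_u)$, where
\[
 f_a(x)=(1-e^{-x})e^{ax},
\]
and $f_a$ is convex with $f_a(0)=0$. For fixed $S$, the sum of convex terms over the box $[0,\log2]^{\#\text{children}}$ is maximized at an extreme point. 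Such a point has $m=\lfloor S/\log2\rfloor$ coordinates equal to $\log2$, one remainder coordinate $r=S-m\log 2$, and all others zero. With $f_a(\log2)=2^{a-1}$, this gives
\[
 K_p(v)\le G_a(m,r)=\frac{2^{-m}e^{-r}}{(1+2^{-m}e^{-r})^2}\bigl(m\,2^{a-1}+f_a(r)\bigr)^{p-1},
\]
so each claimed bound becomes a maximization over $m\in\mathbb Z_{\ge0}$ and $r\in[0,\log 2)$.

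As a check of feasibility, take $p=2$, where $f_1(x)=e^x-1$. At $m=2,\ r=0$ one gets $\tfrac14\cdot\tfrac{16}{25}\cdot2=\tfrac8{25}$. This is exactly the claimed constant, which suggests this reduction is the intended one. It remains to show that $G_1(m,r)$ is nonincreasing in $r$ on each piece, so that the maximum sits at $r=0$, and then to compare the values $2^{-m}m/(1+2^{-m})^2$, which peak at $m=2$. The cases $p=3$ and $p=\tfrac32$ follow the same pattern. For $p=3$ the target is $\sup G_{1/2}\le\tfrac12$, with $f_{1/2}(\log2)=2^{-1/2}$. For $p=\tfrac32$ the target is $\sup G_2<2^{-3/2}$, with $f_2(\log2)=2$ and outer exponent $\tfrac12$. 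In both cases I would tabulate the small values of $m$ and use the decay of $2^{-m}m^{p-1}$ for large $m$.

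The main obstacle is the one-variable optimization inside each block $r\in[0,\log2)$. There the prefactor decreases while $f_a(r)$ increases, and for $a=2$ the growth $f_2(r)\approx r e^{2r}$ competes most strongly with the factor $e^{-r}$. My first attempt would be to show that $\log G_a$ is concave or monotone in $r$ on each block by differentiating, and otherwise to bound the block by its endpoint values plus a derivative estimate. If the convex-extreme-point bound is too lossy for $p=\tfrac32$, the fallback is to use the exact relation $b_v=\lambda_vP/(1+\lambda_vP)^2$ and choose the Hölder weights differently, for instance $b_u^{1/2}$ instead of $b_u$.
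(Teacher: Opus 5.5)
Your reduction is correct and is the paper's own. Weighted H\"older at each parent gives $\|\mathsf M\|_{p,b}^p\le\sup_v b_v\bigl(\sum_u q_u(1-q_u)^{-1/(p-1)}\bigr)^{p-1}$. The recursion with $\lambda_v\le1$ gives $b_v\le R/(1+R)^2$, where $R=1/P=\prod_u(1+y_u)$ and $y_u=q_u/(1-q_u)=e^{x_u}-1$. Your convexity/extreme-point step is a clean form of the paper's endpoint packing: the paper fixes $\sum_u y_u$ and minimizes $R$, while you fix $\log R$ and maximize the sum. For $p=2$ your $G_1(m,r)$ is exactly the paper's $f_k(R)=R(k-1+R/2^k)/(1+R)^2$ with $k=m$, $R=2^me^r$.

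However, the quantitative content of the lemma is the subsequent one-variable optimization, and you leave it open. Two of the steps you intend would fail as stated.
\begin{itemize}
\item[(i)] For $p=2$, $G_1(m,\cdot)$ is \emph{not} nonincreasing on the blocks $m=0,1$. For instance, $G_1(1,r)=\tfrac12(1+e^{-r}/2)^{-2}$ rises from $2/9$ to $8/25$. The derivative of $f_k$ has the sign of $k-1+(2^{1-k}-(k-1))R$, which is positive for $k=0,1$ and negative for $k\ge2$. The conclusion survives because of monotonicity on each block in \emph{either} direction together with continuity across blocks, $G_a(m,\log2^-)=G_a(m+1,0)$. Hence the supremum is $\max_mG_1(m,0)=8/25$.
\item[(ii)] For $p=3/2$ your first attempt fails. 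You have $G_2(1,0)\approx0.3143$, $G_2(1,0.35)\approx0.3103$ and $G_2(1,\log2)=0.32$, so $\log G_2(1,\cdot)$ is neither monotone nor concave on that block. Your reduction is not too lossy: its supremum is $8/25<2^{-3/2}$, so the proposed change of weights is unnecessary. The interior of each block still has to be controlled, though, and tabulating $m$ at $r=0$ does not do that.
\end{itemize}

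The paper avoids block-by-block analysis for $p=3/2$ and $p=3$ with two global majorants, and these are what you should add.

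For $p=3/2$, write $\sum_uy_u=k+\theta$ with $k\in\mathbb Z_{\ge0}$ and $0\le\theta\le1$. Since $y_u\le1$, packing gives $R\ge2^k(1+\theta)\ge2\sum_uy_u$. Also $q_u(1-q_u)^{-2}=y_u(1+y_u)\le2y_u$. So the squared local coefficient is at most $R^3/(1+R)^4\le27/256<1/8$.

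For $p=3$, take the chord of your convex $f_{1/2}(x)=2\sinh(x/2)$ on $[0,\log2]$: $f_{1/2}(x)\le x/(\sqrt2\log2)$. With your $S=\sum_ux_u$ one has $P/(1+P)^2=1/(2(1+\cosh S))$. The local coefficient is therefore at most $S^2/(4(\log2)^2(1+\cosh S))$. This is below $1/2$ by three facts: $\log2>11/16$, $\cosh S\ge1+S^2/2+S^4/24$, and the discriminant of $121S^4-1620S^2+5808$ in $S^2$ is negative.

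The same chord device, using $f_2(\log2)=2$, would also settle $p=3/2$, with maximum about $0.334<2^{-3/2}$. At $p=2$ it gives only about $0.323>8/25$, which is why the sharp constant there needs the exact block analysis.
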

\begin{proof}
Weighted H\"older at each parent gives
\begin{equation}\label{cubic:eq:1}
\|\mathsf{M}\|_{p,b}^p\le
 \sup_v b_v\left(
 \sum_{u\text{ child of }v}\frac{q_u}{(1-q_u)^{1/(p-1)}}
 \right)^{p-1}.
\end{equation}
Write $y_i=q_i/(1-q_i)\in[0,1]$, $R=\prod_i(1+y_i)$,
and $S=\sum_i y_i$ at a parent.  Its weight is at most
$R/(1+R)^2$, since $\lambda_v\le1$.

For $p=3/2$, the square of the right-hand local coefficient in \eqref{cubic:eq:1} is
at most
\[
 \frac{R^2}{(1+R)^4}\sum_i y_i(1+y_i).
\]
For a fixed sum $S=k+u$, $k\ge0$ an integer and $0\le u\le1$,
moving two non-endpoint entries toward $0,1$ decreases their factor
product.  Hence $R\ge2^k(1+u)\ge2(k+u)=2S$; the second inequality is
immediate for $k=0$, and follows from $2^k\ge2k$ for $k\ge1$.
Consequently, $\sum_i y_i(1+y_i)\le2S\le R$, and
\[
 \|\mathsf{M}\|_{3/2,b}^3\le
 \sup_{R\ge1}\frac{R^3}{(1+R)^4}=\frac{27}{256}<\frac18.
\]

For $p=2$, \eqref{cubic:eq:1} is $b_vS$.  The sharp local bound follows by endpoint packing.
For a fixed sum $S=k+u$, endpoint packing gives
$R\ge2^k(1+u)$.  Since $R/(1+R)^2$ decreases on $R\ge1$,
it remains to maximize
\[
 f_k(R)=\frac{R(k-1+R/2^k)}{(1+R)^2},
 \qquad 2^k\le R\le2^{k+1}.
\]
Its derivative has the same sign as
$k-1+(2^{1-k}-(k-1))R$.  This is positive for $k=0,1$ and
negative for $k\ge2$ throughout their respective intervals.
The maximum is therefore at $S=2$, $R=4$, and equals $8/25$.

For $p=3$, set $H=\log R=\sum_i z_i$, where
$z_i=\log(1+y_i)\in[0,\log2]$.  The convexity of
$2\sinh(z/2)$ and its endpoint chord give
\[
 \sum_i\frac{q_i}{\sqrt{1-q_i}}
 =\sum_i2\sinh(z_i/2)
 \le\frac{H}{\sqrt2\log2}.
\]
Thus the local coefficient is at most
\begin{equation}\label{cubic:eq:2}
\frac{H^2}{4(\log2)^2(1+\cosh H)}<\frac12.
\end{equation}
For an explicit verification of the strict inequality, use
$\log2>11/16$ and $\cosh H\ge1+H^2/2+H^4/24$.
After multiplying by $3072$, the needed positive expression is
\[
 121H^4-1620H^2+5808>0,
\]
whose quadratic discriminant in $H^2$ is $-186672$.
Finally, $(4/5)^3>1/2$.  This proves all three assertions.
\end{proof}

For a linear statistic $Y_a=\sum_v a_vJ_v$, where $J_v$ is the occupied
indicator in the whole forest and $a_v$ are fixed real weights, let
$d_v^a$ be the occupied-root conditional mean minus the absent-root
conditional mean of $\sum_{u\in T_v}a_uJ_u$ in the descendant tree
$T_v$. These responses obey
\begin{equation}\label{cubic:eq:3}
(\mathrm{Id}+\mathsf{M})d^a=a.
\end{equation}
Let $\mathcal R$ be the set of roots. Reveal vertices with parents
before children, and define
\[
 \zeta_r=J_r-q_r\quad(r\in\mathcal R),\qquad
 \zeta_v=J_v-q_v(1-J_{\operatorname{par}(v)})\quad(v\in V(F)-\mathcal R).
\]
Conditional on the previously revealed vertices, an unrevealed descendant
subtree interacts with that history only through its parent. Thus the
conditional mean of $J_v$ is $q_v(1-J_{\operatorname{par}(v)})$ at a
nonroot, and $q_v$ at a root. Consequently, the $\zeta_v$ are martingale
differences. The response recursion telescopes to
\begin{equation}\label{cubic:eq:4}
Y_a-\E Y_a=\sum_v d^a_v\zeta_v.
\end{equation}
Put $c_r=1$ for each root and $c_v=\Prob(J_{\operatorname{par}(v)}=0)$ otherwise.
Then $\E \zeta_v^2=c_vb_v$ and
\[
 \Var Y_a=\sum_v c_vb_v(d^a_v)^2.
\]

\begin{lemma}[Cubic increment and variance-clock bounds]
\label{lem:cubic-clock}
Let $X=\sum_vJ_v$, $d=d^{\boldsymbol1}$, and $B_0=\sum_vb_v$.
Write $\Delta_v=d_v\zeta_v$ for its martingale increments, and
$A$ for their total predictable quadratic variation.  Then
\begin{align*}
 \sum_vb_v|d_v|^3&\le125B_0,\\
 \sum_v\E |\Delta_v|^3&\le125B_0,\\
 \E |A-\Var X|^{3/2}&\le250B_0.
\end{align*}
Also $\Var X\le(25/4)B_0$.
\end{lemma}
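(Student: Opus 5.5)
The plan is to derive all four bounds from the linear response equation \eqref{cubic:eq:3}, $(\mathrm{Id}+\mathsf M)d=\boldsymbol1$, combined with the operator bounds of Lemma~\ref{lem:cubic-response-norms}. For each $p\in\{3/2,2,3\}$ a Neumann series gives
\[
 \|d^a\|_{p,b}\le\frac{\|a\|_{p,b}}{1-\|\mathsf M\|_{p,b}} .
\]
With $a=\boldsymbol1$ we have $\|\boldsymbol1\|_{p,b}^p=B_0$.

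\textbf{The first bound.} Take $p=3$. Since $1-\|\mathsf M\|_{3,b}\ge1-2^{-1/3}>1/5$, we get $\sum_vb_v|d_v|^3\le125B_0$. In fact the bound is about $114B_0$, which leaves slack that I will need later.

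\textbf{The variance bound.} Take $p=2$. Since $1-\|\mathsf M\|_{2,b}\ge2/5$, we get $\sum_vb_vd_v^2\le(25/4)B_0$. Because $c_v\le1$, the identity $\Var X=\sum_vc_vb_vd_v^2$ gives $\Var X\le(25/4)B_0$.

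\textbf{The second bound.} Conditional on the past, $\zeta_v$ vanishes when the parent is occupied. Otherwise it is a centred Bernoulli$(q_v)$ variable, whose third absolute moment is $b_v\bigl((1-q_v)^2+q_v^2\bigr)\le b_v$. Hence $\E|\Delta_v|^3\le c_vb_v|d_v|^3$, and summing reduces the claim to the first bound.

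\textbf{The clock bound.} Here
\[
 A=\sum_{r\in\mathcal R}b_rd_r^2+\sum_{v\notin\mathcal R}b_vd_v^2\bigl(1-J_{\operatorname{par}(v)}\bigr),
\]
so subtracting $\Var X=\E A$ leaves
\[
 A-\Var X=-\sum_w g_w\bigl(J_w-\E J_w\bigr),\qquad g_w=\sum_{u\text{ child of }w}b_ud_u^2 .
\]
This is minus a centred linear statistic $Y_g$. By \eqref{cubic:eq:4}, $A-\Var X=-\sum_vd^g_v\zeta_v$ is again a martingale with the same increments $\zeta_v$. I would then apply the von Bahr--Esseen inequality for martingale differences at exponent $3/2$, with constant $2$:
\[
 \E|A-\Var X|^{3/2}\le2\sum_v|d^g_v|^{3/2}\,\E|\zeta_v|^{3/2}.
\]
Next I bound the weights. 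Write $g=\mathsf Mz$ with $z_u=(1-q_u)d_u^2$. Then $\|g\|_{3/2,b}\le\|\mathsf M\|_{3/2,b}\|z\|_{3/2,b}$ and $\|z\|_{3/2,b}^{3/2}\le\sum_ub_u|d_u|^3$. Combined with the response bound at $p=3/2$, this gives
\[
 \|d^g\|_{3/2,b}^{3/2}\le\left(\frac{\|\mathsf M\|_{3/2,b}}{1-\|\mathsf M\|_{3/2,b}}\right)^{3/2}\sum_ub_u|d_u|^3 .
\]
Since $\|\mathsf M\|_{3/2,b}<1/2$, the bracket is below $1$, and the right side is at most $125B_0$.

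\textbf{Main obstacle.} The difficulty is the constant. The exact Bernoulli moment is $\E|\zeta_v|^{3/2}=c_vb_v\bigl(\sqrt{1-q_v}+\sqrt{q_v}\bigr)$, which can reach $\sqrt2\,c_vb_v$ rather than $c_vb_v$. The crude chain above therefore yields roughly $2\sqrt2\cdot125B_0$, not $250B_0$. To close the gap I would first spend the available slack:
\begin{itemize}
\item the sharper values $\|\mathsf M\|_{3/2,b}\le(27/256)^{1/3}$ and $\|\mathsf M\|_{3,b}\le2^{-1/3}$;
\item the factor $(1-q_u)^{3/2}$ inside $z$;
\item the pairing of $\sqrt{1-q}+\sqrt q$ with the weight $b_v$.
\end{itemize}
If this is still insufficient, I would replace von Bahr--Esseen by a Burkholder square-function bound at exponent $3/2$, using $\E\bigl(\sum\Delta^2\bigr)^{3/4}\le\bigl(\E\sum\Delta^2\bigr)^{3/4}$ only where concavity helps. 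The structure of the argument is unaffected; only the numerical certificate is at stake.
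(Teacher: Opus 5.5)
Your first two bounds and $\Var X\le(25/4)B_0$ are correct and match the paper. Your reduction of the clock bound is also the paper's: the clock identity, the weights $h=\mathsf M((1-q)d^2)$ (your $g$), the martingale representation of $\sum_w h_wJ_w$ through its responses $d^h=(\mathrm{Id}+\mathsf M)^{-1}h$, and $\|d^h\|_{3/2,b}\le\|(1-q)d^2\|_{3/2,b}\le\|d\|_{3,b}^2$.

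\textbf{The gap is the final constant, which you leave open.} With the von Bahr--Esseen constant $2$ and $\E|\zeta_v|^{3/2}\le\sqrt2\,b_v$ you get about $354B_0$. Using the sharper values $\|\mathsf M\|_{3/2,b}\le(27/256)^{1/3}$ and $\|d\|_{3,b}^3\le(1-2^{-1/3})^{-3}B_0\approx113.9B_0$ only lowers this to about $2\sqrt2\cdot0.848\cdot113.9\,B_0\approx273B_0$, still above $250B_0$. The other items on your list give no uniform gain. The factor $(1-q_u)^{3/2}$ is essentially $1$ whenever $q_u$ is small. The factor $\sqrt{q_v}+\sqrt{1-q_v}$ sits at the vertices carrying $d^h_v$, not at the children carrying $d_u$, so any pairing would have to pass through the nonlocal resolvent; you do not carry that out. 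The Burkholder fallback is not worked out either. The standard square-function constant $p^*-1=2$ at $p=3/2$ yields $2^{3/2}$ after the subadditive step, which is worse. The Jensen variant needs $\ell^2(b)$ control of $d^h$, hence of $\sum_v b_vd_v^4$, which Lemma~\ref{lem:cubic-response-norms} does not supply. The value $250$ matters downstream: it enters \eqref{cubic:eq:11} through $(16\cdot250)^{2/3}<256$, and that inequality already fails with $273$.

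\textbf{The missing ingredient is a sharper martingale inequality at exponent $3/2$.} For martingale differences, $\E|\sum_iZ_i|^{3/2}\le\sqrt2\sum_i\E|Z_i|^{3/2}$. It follows from the scalar inequality
\[
|x+y|^{3/2}\le|x|^{3/2}+\tfrac32\operatorname{sgn}(x)|x|^{1/2}y+\sqrt2|y|^{3/2}.
\]
You obtain this by integrating, along the segment from $x$ to $x+y$, the H\"older bound $\bigl|\operatorname{sgn}(a)|a|^{1/2}-\operatorname{sgn}(b)|b|^{1/2}\bigr|\le\sqrt2\,|a-b|^{1/2}$; the $\sqrt2$ comes from $a,b$ of opposite signs. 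The linear term then vanishes under successive conditional expectations. With this constant your crude chain closes exactly, with no slack to spend:
\[
 \E|A-\Var X|^{3/2}\le\sqrt2\cdot\sqrt2\sum_v b_v|d^h_v|^{3/2}\le2\|(1-q)d^2\|_{3/2,b}^{3/2}\le2\|d\|_{3,b}^3\le250B_0 .
\]
This uses only $\|\mathsf M\|_{3/2,b}<1/2$ and $(1-q)^{3/2}\le1$, and it is precisely the paper's route.
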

\begin{proof}
The $\ell^3(b)$ resolvent estimate in \eqref{cubic:eq:3} gives
$\|d\|_{3,b}\le5\|\boldsymbol1\|_{3,b}$.
The conditional third absolute moment of $\zeta_v$ is at most $b_v$,
since, when its parent is vacant,
\[
 \E |J_v-q_v|^3
 =b_v\bigl(q_v^2+(1-q_v)^2\bigr)\le b_v.
\]
This proves the first two bounds.

For the third, retain the exact clock identity
\begin{equation}\label{cubic:eq:5}
A=\sum_v b_vd_v^2
       -\sum_v h_vJ_v,
 \qquad h_v=\sum_{u\text{ child of }v}b_ud_u^2.
\end{equation}
Indeed, an increment at a nonroot $u$ has conditional variance
$b_ud_u^2(1-J_{\operatorname{par}(u)})$, whereas a root contributes
$b_ud_u^2$.
Now $h=\mathsf{M}((1-q)d^2)$.  If $g=(\mathrm{Id}+\mathsf{M})^{-1}h$ denotes the responses
of $\sum h_vJ_v$, the $\ell^{3/2}(b)$ operator bound gives
\begin{equation}\label{cubic:eq:6}
\|g\|_{3/2,b}\le2\|h\|_{3/2,b}
 \le\|(1-q)d^2\|_{3/2,b}
 \le\|d\|_{3,b}^{2}.
\end{equation}
We use the elementary martingale inequality at $p=3/2$,
\begin{equation}\label{cubic:eq:7}
\E \left|\sum_i Z_i\right|^{3/2}
 \le\sqrt2\sum_i\E |Z_i|^{3/2},
\end{equation}
for martingale differences $Z_i$.  It follows by successive conditional
expectation from
\[
 |x+y|^{3/2}\le |x|^{3/2}
  +\tfrac32\operatorname{sgn}(x)|x|^{1/2}y
  +\sqrt2|y|^{3/2}.
\]
The last scalar estimate follows by integrating the $1/2$-H\"older
bound for $\operatorname{sgn}(x)|x|^{1/2}$, whose constant is $\sqrt2$.
Moreover
\[
 \E |\zeta_v|^{3/2}
 =c_vb_v\bigl(\sqrt{q_v}+\sqrt{1-q_v}\bigr)
 \le\sqrt2 b_v.
\]
Apply \eqref{cubic:eq:7} to the martingale representation of the statistic in \eqref{cubic:eq:5},
then use \eqref{cubic:eq:6}:
\[
 \E |A-\E A|^{3/2}
 \le2\|g\|_{3/2,b}^{3/2}
 \le2\|d\|_{3,b}^3\le250B_0.
\]
Finally, $\E A=\Var X$.
The $\ell^2(b)$ resolvent estimate with norm less than $3/5$ gives
$\Var X\le\|d\|_{2,b}^2\le(25/4)B_0$.
\end{proof}

The proof below uses a classical device for martingale approximation:
stop at a target predictable quadratic variation and complete the remaining
variance. Related clock-completion constructions appear, for example, in
Mourrat \cite[Section~2]{Mourrat}. We include the characteristic-function
estimate and its constants in full; the device itself is not claimed as new.

\begin{lemma}[Direct martingale characteristic-function comparison]
\label{lem:martingale-cf-clock}
Let $S=\sum_{i=1}^m D_i$ be a real martingale difference sum with finite
third absolute moments.  Set
\[
 v_i=\E (D_i^2\mid\mathcal F_{i-1}),\quad
 A_i=\sum_{j\le i}v_j,\quad
 A=A_m,\quad \sigma^2=\E A>0.
\]
For every real $t$,
\begin{equation}\label{cubic:eq:8}
\left|\E e^{itS}-e^{-\sigma^2t^2/2}\right|
 \le\frac{t^2}{2}\E |A-\sigma^2|
 +\frac{1+\E |Z|^3}{6}|t|^3
       \sum_i\E |D_i|^3,
\end{equation}
where $Z$ is a standard normal variable.
\end{lemma}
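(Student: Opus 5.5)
The plan is the clock-completion device: run the martingale until its predictable quadratic variation reaches the target $\sigma^2$, add independent Gaussian noise supplying whatever variance is still missing, and compare everything step by step with a Gaussian of variance $\sigma^2$.

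\textbf{Setup.} Enlarge the probability space with i.i.d.\ standard normals $Z_1,\dots,Z_m$, independent of everything else. Define the predictable clock $\tau=\max\{i:A_i\le\sigma^2\}$ (a stopping time of the previous step, since $A_i$ is $\mathcal F_{i-1}$-measurable) and the completing variance $R=\sigma^2-A_\tau\ge0$, which is also predictable. The decomposition is
\[
\E e^{itS}-e^{-\sigma^2t^2/2}
=\bigl(\E e^{itS}-\E e^{itS_\tau-t^2R/2}\bigr)
+\bigl(\E e^{itS_\tau-t^2R/2}-e^{-\sigma^2t^2/2}\bigr),
\]
where $S_i=\sum_{j\le i}D_j$.

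\textbf{Lindeberg swapping for the second bracket.} Consider the process $M_k=\E\bigl[e^{itS_{k\wedge\tau}-t^2(\sigma^2-A_{k\wedge\tau})/2}\bigr]$. Then $M_0=e^{-\sigma^2t^2/2}$ and $M_m$ equals the first term of the second bracket. For $k\le\tau$ each step multiplies by $e^{itD_k+t^2v_k/2}$, and its conditional expectation is
\[
e^{t^2v_k/2}\,\E\bigl(e^{itD_k}\mid\mathcal F_{k-1}\bigr)
=e^{t^2v_k/2}\bigl(1-t^2v_k/2+O(|t|^3\E(|D_k|^3\mid\mathcal F_{k-1}))\bigr).
\]
The remaining deterministic factor has modulus at most one, because its exponent has real part $-t^2(\sigma^2-A_k)/2\le0$. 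To avoid the crude $e^{t^2v_k/2}$ loss, the cleaner route is the classical replacement: at step $k$, replace $D_k$ by $\sqrt{v_k}Z_k$ conditionally. The Taylor error is then at most $\frac{|t|^3}{6}\bigl(\E|D_k|^3+v_k^{3/2}\E|Z|^3\bigr)$, and conditional Jensen gives $v_k^{3/2}\le\E(|D_k|^3\mid\mathcal F_{k-1})$. Summing over $k$ produces exactly the cubic term $\frac{1+\E|Z|^3}{6}|t|^3\sum\E|D_i|^3$, with first and second orders cancelling by the martingale property and matched variances.

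\textbf{First bracket.} This should be the quadratic-variation term. Conditioned on the history, the increments after $\tau$ satisfy, again by the Lindeberg replacement,
\[
\E e^{itS}\approx\E e^{itS_\tau-t^2(A-A_\tau)/2},
\]
with the error included in the cubic sum above, since that sum ranges over all $i$. So the two brackets combine into
\[
\Bigl|\E e^{itS_\tau}\bigl(e^{-t^2(A-A_\tau)/2}-e^{-t^2(\sigma^2-A_\tau)/2}\bigr)\Bigr|\le\frac{t^2}{2}\E|A-\sigma^2|,
\]
using $|e^{-x}-e^{-y}|\le|x-y|$ for $x,y\ge0$.

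\textbf{Main obstacle.} The delicate step is organizing a single telescoping that treats steps before and after $\tau$ uniformly, keeping every Gaussian factor a contraction. I would first try the interpolation
\[
U_k=e^{itS_k}\,\E_{\mathcal F_k}\bigl[e^{-t^2\max(\sigma^2,A)/2+t^2A_k/2}\bigr]
\]
and otherwise fall back on the stopped two-bracket argument above.
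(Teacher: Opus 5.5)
Your second bracket is correct and is exactly the paper's deterministic-clock step. Because you stop at $\tau$, the prefactor $\boldsymbol1_{\{\tau\ge k\}}e^{itS_{k-1}-t^2(\sigma^2-A_k)/2}$ is $\mathcal F_{k-1}$-measurable with modulus at most one, so the one-step conditional Taylor comparison applies.

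\textbf{The gap is in the first bracket.} You claim that $\E e^{itS}$ agrees with $\E e^{itS_\tau-t^2(A-A_\tau)/2}$ up to the cubic sum. That would require telescoping $e^{itS_k-t^2(A-A_k)/2}$, whose increment is $e^{itS_{k-1}-t^2(A-A_k)/2}(e^{itD_k}-e^{-t^2v_k/2})$. After $\tau$, the remaining clock $A-A_k$ depends on later conditional variances. So this prefactor is not predictable and can be correlated with $D_k$. The correlation produces a first-order term of the type $it\,\E[D_k\cdot(\text{change in the predicted future clock})]$, which no third-moment sum controls. Your $U_k$ interpolation has the same defect: its prefactor $\E_{\mathcal F_k}[\cdots]$ is $\mathcal F_k$-measurable rather than $\mathcal F_{k-1}$-measurable.

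The claim is in fact false. Take Brownian increments on a mesh $h=1/(4N)$. Run to time $1$; stop if $W_1\le0$; otherwise run to time $2$, and continue to time $3$ if and only if $W_2>W_1$. Then $\sigma^2=7/4$, and on $\{W_1>0\}$ your $\tau$ is the step at time $7/4$. With $Y=W_2-W_{7/4}$ one finds
\[
 \E e^{itS}-\E e^{itS_\tau-t^2(A-A_\tau)/2}
 =(1-e^{-t^2/2})\,\E\bigl[\boldsymbol1_{\{W_1>0\}}e^{itW_{7/4}}(e^{itY}-e^{-t^2/8})\boldsymbol1_{\{W_2\le W_1\}}\bigr]
 =-\frac{it^3}{16\sqrt{2\pi}}+O(t^4).
\]
This is independent of $h$, whereas $\sum_i\E|D_i|^3\le3\E|Z|^3h^{1/2}\to0$. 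The lemma itself still holds in this example; it is your intermediate step that fails.

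\textbf{The missing idea.} The post-crossing suffix should not be Gaussian-replaced at all. Given $\mathcal F_\tau$, the suffix $U=S-S_\tau$ is centered with $\E(U^2\mid\mathcal F_\tau)=\E(A-A_\tau\mid\mathcal F_\tau)$. The second-order Taylor bound therefore lets you simply discard it, at cost $\frac{t^2}{2}\E(A-A_\tau)$.

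With your choice of $\tau$ this alone is not enough. The discarded variance $v_{\tau+1}$ already contains the overshoot $R=\sigma^2-A_\tau$, and you would pay $R$ again when appending $\sqrt R\,Z$. Nothing controls $R$ by $|A-\sigma^2|$.

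The paper avoids this by taking $\tau$ to be the first crossing index, so your $\tau$ is its $\tau-1$ and your $R$ is its $\alpha^2v_\tau$. It then proceeds in three steps:
\begin{itemize}
\item It retains $\alpha D_\tau$ with $\alpha^2=(\sigma^2-A_{\tau-1})/v_\tau$. Integrating $\frac{d}{ds}\E(e^{istD_\tau}\mid\mathcal F_{\tau-1})$ over $\alpha\le s\le1$ gives a cost of $\frac{t^2}{2}(1-\alpha^2)v_\tau=\frac{t^2}{2}(A_\tau-\sigma^2)$.
\item It discards everything after the crossing, at cost $\frac{t^2}{2}(A-A_\tau)$.
\item It appends $\sqrt{(\sigma^2-A)_+}\,Z$ only when no crossing occurs.
\end{itemize}
The costs total $\frac{t^2}{2}\E|A-\sigma^2|$, and the completed martingale has deterministic clock $\sigma^2$. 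Your second-bracket telescoping then finishes the proof, since scaling only decreases third moments.
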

\begin{proof}
First suppose $A=\sigma^2$ almost surely. Put $S_0=A_0=0$. For the
partial sums $S_i$, telescope
\[
 F_i=\exp\{itS_i-t^2(\sigma^2-A_i)/2\}.
\]
More explicitly, set
\[
 H_i=\exp\{itS_{i-1}-t^2(\sigma^2-A_i)/2\},\qquad
 F_i-F_{i-1}=H_i\bigl(e^{itD_i}-e^{-t^2v_i/2}\bigr).
\]
Since $A_i$ is $\mathcal F_{i-1}$-measurable and $A_i\le\sigma^2$,
$H_i$ is predictable and has modulus at most one. Taylor comparison
with a centered normal variable of variance $v_i$, together with the
conditional Lyapunov inequality, gives
\[
 \left|\E (e^{itD_i}\mid\mathcal F_{i-1})
                  -e^{-t^2v_i/2}\right|
 \le\frac{1+\E |Z|^3}{6}|t|^3
                 \E (|D_i|^3\mid\mathcal F_{i-1}).
\]
Summing proves \eqref{cubic:eq:8} with zero first term.  For a conditionally Gaussian increment, the comparison error is zero.

For general $A$, let $\tau$ be the first index with $A_\tau>\sigma^2$,
when such an index exists. Retain all increments before $\tau$, replace $D_\tau$
by $\alpha D_\tau$, where
\[
 \alpha=\sqrt{\frac{\sigma^2-A_{\tau-1}}{v_\tau}}\in[0,1),
\]
and discard the subsequent increments.  If no crossing occurs, retain all increments. The event that the crossing
occurs at index $i$, and the scaling used there, are $\mathcal F_{i-1}$-measurable.
Regard each discarded increment as zero while retaining the original
filtration through $\mathcal F_m$. The retained clock is $\min(A,\sigma^2)$.
At a final step append $\sqrt{(\sigma^2-A)_+}\,Z$, where $Z$ is a
standard normal variable independent of $\mathcal F_m$. This increment is
conditionally centered Gaussian; its random variance is
$(\sigma^2-A)_+$. The completed martingale has deterministic total
clock $\sigma^2$.

On the event $\{\tau\le m\}$, the discarded suffix satisfies
\[
 \E(S-S_\tau\mid\mathcal F_\tau)=0,\qquad
 \E\bigl((S-S_\tau)^2\mid\mathcal F_\tau\bigr)
       =\E(A-A_\tau\mid\mathcal F_\tau).
\]
The conditional second-order Taylor estimate therefore shows that
removing this suffix changes the characteristic function by at most
$t^2\E[\boldsymbol1_{\{\tau\le m\}}(A-A_\tau)]/2$.
Scaling the crossing increment costs at most
$t^2\E [(1-\alpha^2)v_\tau
\boldsymbol1_{\{\tau\le m\}}]/2$.
For this step integrate
\[
 \left|\E \bigl[itD_\tau
                 (e^{istD_\tau}-1)\mid\mathcal F_{\tau-1}\bigr]\right|
 \le t^2s v_\tau\qquad(\alpha\le s\le1).
\]
The two costs sum to $t^2\E (A-\sigma^2)_+/2$.
Appending the Gaussian costs at most
$t^2\E (\sigma^2-A)_+/2$.
Apply the deterministic-clock comparison to the completed martingale.
Scaling reduces third absolute moments, and the appended Gaussian
contributes no comparison error.  This proves \eqref{cubic:eq:8}.
\end{proof}

\begin{proof}[Proof of Theorem~\ref{central:thm:central}]
At common activity $\lambda$, monotonicity of $q(1-q)$ on $[0,1/2]$
gives $B_0\le\nu/(1+\lambda)^2$.
Lemma~\ref{lem:bipartition} gives
\begin{equation}\label{cubic:eq:9}
\sigma^2\ge\frac{\nu}{2(1+\lambda)^4},\qquad
 |\E e^{i\theta X}|
 \le\exp\left\{-\frac{\nu\sin^2(\theta/2)}{2(1+\lambda)^3}\right\}.
\end{equation}
Lemma~\ref{lem:cubic-clock} supplies the variance upper bound
$\sigma^2\le25\nu/[4(1+\lambda)^2]$.
For $\chi(s)=\E  e^{is(X-k)/\sigma}$ and $|s|\le\pi\sigma$,
using $\sin(|s|/(2\sigma))\ge |s|/(\pi\sigma)$ yields
\begin{equation}\label{cubic:eq:10}
|\chi(s)|\le e^{-s^2/250}.
\end{equation}
Here $\pi^2<10$ and $1+\lambda\le2$ suffice.

Apply \eqref{cubic:eq:8} and Lemma~\ref{lem:cubic-clock}, using
$\E |Z|^3<2$.  The result is the global Gaussian comparison
\begin{equation}\label{cubic:eq:11}
|\chi(s)-e^{-s^2/2}|
 \le256\nu^{-1/3}s^2+3000\nu^{-1/2}|s|^3.
\end{equation}
Indeed, its quadratic coefficient is at most
$250^{2/3}(1+\lambda)^{8/3}\nu^{-1/3}
\le4000^{2/3}\nu^{-1/3}<256\nu^{-1/3}$,
and its cubic coefficient is at most
$125\sqrt2(1+\lambda)^4\nu^{-1/2}
\le2000\sqrt2\nu^{-1/2}<3000\nu^{-1/2}$.

At integer mean, Fourier inversion gives
\[
 \sigma^3(2p_k-p_{k-1}-p_{k+1})
 =\frac1{2\pi}\int_{-\pi\sigma}^{\pi\sigma}
       2\sigma^2(1-\cos(s/\sigma))\operatorname{Re}\chi(s)\,ds.
\]
Use $0\le2\sigma^2(1-\cos(s/\sigma))\le s^2$ and its difference
from $s^2$ at most $s^4/(12\sigma^2)$.
For any $0<T<\pi\sigma$, splitting at $T$, applying \eqref{cubic:eq:10}--\eqref{cubic:eq:11},
and integrating the Gaussian tails gives
\begin{equation}\label{cubic:eq:12}\begin{aligned}
 &\left|\sigma^3(2p_k-p_{k-1}-p_{k+1})
                    -\frac1{\sqrt{2\pi}}\right|\\
 &\quad\le\frac{256}{15}T^5\nu^{-1/3}
       +\frac{500}{3}T^6\nu^{-1/2}+\frac4\nu
       +\frac{250}{3}\left(T+\frac{125}{T}\right)e^{-T^2/250}.
\end{aligned}\end{equation}
The multiplier error is at most $1/(8\sigma^2)\le4/\nu$.

Choose $T=56$.  If $\nu\ge2^{104}$, then
$\nu^{1/3}>3\cdot2^{33}$ and $\sqrt\nu\ge2^{52}$.
The variance lower bound $\sigma^2\ge\nu/32$ ensures $T<\pi\sigma$.
Also $56^2/250>18\log2$, using $\log2<25/36$.
Thus the right side of \eqref{cubic:eq:12} is less than
\begin{equation}\label{cubic:eq:13}
\frac{16807}{46080}
 +\frac{58824500}{3\cdot2^{34}}
 +2^{-102}
 +\frac{815250}{168\cdot2^{18}}
 <\frac{39}{100}<\frac1{\sqrt{2\pi}}.
\end{equation}
Both comparisons are rational after using $\pi<22/7$ for the last one.
For reference, the elementary bounds $11/16<\log2<25/36$ follow
from the series $\log2=2\sum_{j\ge0}(1/3)^{2j+1}/(2j+1)$,
retaining its first two terms and bounding the rest geometrically.
This proves the stated error bound and makes the central second
difference strictly positive.
The arithmetic--geometric mean inequality then gives $p_k^2>p_{k-1}p_{k+1}$;
the activity factors cancel in the polynomial determinant.
\end{proof}

\subsection{Scope and numerical reserve}
The expectations in the extension counts are uniform at fixed rank. The
response parameters $q_v$ are descendant probabilities with a vacant parent,
not whole-forest marginals. The martingale increments are revealed with
parents before children, and the variance-clock crossing in
Lemma~\ref{lem:martingale-cf-clock} is predictable. Fourier inversion is
applied to the span-one integer-valued size at an integral mean. Finally,
the prefix assembly fixes $\varepsilon=1/5$ before selecting its threshold.

The exact endpoint majorant in \eqref{cubic:eq:13} is
\[
 E_* = \frac{613959936395331007118784021397819}
 {1597239756287569045885846038773760}<\frac{39}{100}<(2\pi)^{-1/2}.
\]
Its margin to the stated error bound is $0.39-E_*=0.00561191175059\ldots$.
Any change to a norm or clock bound must be propagated through the
normalized Fourier estimate before being compared with this margin.
The accompanying rational checks verify the displayed arithmetic; the
uniform bounds in degree, activity, and order are proved above.

\section{Uniform completion and barrier consequences}
\label{sec:uniformcompletion}
We now complete the prefix and then deduce the barrier theorem.
Only the estimates proved in Sections~\ref{sec:smallrank}--\ref{sec:centralcurvature},
the fixed-density input, and the classical decreasing tail are used.
\input{figures/prefix_cover.tex}

\begin{proof}[Proof of Theorem~\ref{thm:main}]
Let $K'=2^{104}$, and let $N_{1/5}$ be an order threshold for the
fixed-density interval of \cite[Theorem~1.2]{FLNYZ}. Take
\[
 N\ge\max\{N_{1/5},\,2(K')^2\}
       =\max\{N_{1/5},\,2^{209}\}.
\]
Fix a forest of order $n\ge N$ and an integer
$1\le j\le\lfloor17\alpha(F)/25\rfloor$.
If $j<K'$, then $n\ge2j^2-4$, so Lemma~\ref{lem:extension} gives
$\Delta_j>0$. If $K'\le j<n/5$, then
$j<\eta_F(1)$ by Lemma~\ref{lem:mean}. The map $\eta_F(\lambda)$
is continuous and strictly increasing from zero, since
$\mathrm d\eta_F/\mathrm d\log\lambda=\operatorname{Var}_\lambda X>0$.
Hence there is a unique $0<\lambda<1$ with $\eta_F(\lambda)=j$.
The same mean bound gives $n\lambda\ge j\ge2^{104}$, and
Theorem~\ref{central:thm:central} gives $\Delta_j>0$.
Finally, if $j\ge n/5$, apply the cited fixed-density theorem.
These three cases cover every required integer rank.
\end{proof}

The quantity $2^{209}$ is the explicit lower-rank cost of this assembly;
$N_{1/5}$ remains the threshold supplied by the cited fixed-density theorem.
Thus this is an existential threshold for the full prefix, not an explicit
numerical value for it. The quantitative supplement supplies a second
assembly and separately qualified explicit consequences.

\begin{corollary}[Eventual separation of log-concavity defects]
For every sufficiently large forest, every internal log-concavity failure occurs at
index $j>17\alpha(F)/25$. In particular, all such defects occur in the
known decreasing terminal segment.
\end{corollary}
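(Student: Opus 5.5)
The corollary follows from Theorem~\ref{thm:main} together with the classical decreasing tail. Let $N$ be the threshold of Theorem~\ref{thm:main}. Fix a forest $F$ of order $n\ge N$, and suppose there is an internal index $j$, with $1\le j<\alpha(F)$, at which log-concavity fails, that is, $\Delta_j(F)\le0$. Theorem~\ref{thm:main} gives $\Delta_j(F)>0$ for every integer $1\le j\le\lfloor17\alpha(F)/25\rfloor$. Since $j$ is an integer, $j\le17\alpha(F)/25$ is equivalent to $j\le\lfloor17\alpha(F)/25\rfloor$. Hence any failure must occur at an index $j>17\alpha(F)/25$, which proves the first sentence.

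For the second sentence, I would compare $17/25$ with the start of the terminal segment recalled from \cite{LM}: $i_{q}\ge i_{q+1}\ge\cdots\ge i_{\alpha}$ with $q=\lceil(2\alpha-1)/3\rceil$. We have $17/25=0.68>2/3$. So $j>17\alpha/25$ gives $j>2\alpha/3>(2\alpha-1)/3$, and therefore $j\ge q$ because $j$ is an integer. Thus every defect index lies in the range where the coefficients are already known to be nonincreasing.

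One point deserves care, namely what "occurs in the segment" should mean. The determinant $\Delta_j$ involves $i_{j-1}$, $i_j$ and $i_{j+1}$. I would interpret the claim as saying that the index $j$ lies at or beyond $q$, and I would check the stronger reading that $j-1\ge q$ as well. We have $j>17\alpha/25$, and $17\alpha/25-1\ge(2\alpha-1)/3$ holds exactly when $\alpha\ge50$, which needs $\alpha$ to be large. For forests with small $\alpha$ relative to the threshold, this stronger reading may fail. So I would state the conclusion in the weaker form, with the central index $j\ge q$. No step is a real obstacle; the only subtlety is this indexing convention, and the weaker form follows directly from the two cited results.
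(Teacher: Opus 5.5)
Your proof is correct and is essentially the paper's argument. Theorem~\ref{thm:main} excludes defects at integers $j\le17\alpha/25$, and $17/25>2/3$ then places any defect at or beyond $\lceil(2\alpha-1)/3\rceil$.

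Your hedge about the stronger reading is unnecessary. Every forest satisfies $\alpha(F)\ge n/2$, so $\alpha\ge50$ holds automatically once $n$ is large. Hence $j-1\ge\lceil(2\alpha-1)/3\rceil$ also holds, and the whole triple lies in the tail. The paper additionally uses $\alpha\ge n/2$ to place defects beyond the Basit--Galvin cutoff.
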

\begin{proof}
The first assertion is Theorem~\ref{thm:main}. The decreasing-tail bound of
Levit--Mandrescu starts at $\lceil(2\alpha-1)/3\rceil$ for forests
\cite{LM}; see also \cite[Proposition~8.2]{FLNYZ}.
Since $17/25>2/3$, a defect above $17\alpha/25$ lies in this segment.
Moreover, it also lies at or after
$\lceil\alpha(n-1)/(n+\alpha)\rceil$, because $\alpha\ge n/2$ gives
$\alpha(n-1)/(n+\alpha)<2\alpha/3$; this is the cutoff in
\cite[Theorem~3]{BG}.
\end{proof}

\begin{proof}[Proof of Theorem~\ref{cor:barrier}]
Put
\[
 C=\left\lceil\frac{2\alpha(F)-1}{3}\right\rceil.
\]
The classical tail bound gives $i_C\ge i_{C+1}\ge\cdots\ge i_\alpha$
\cite{LM}; see also \cite[Proposition~8.2]{FLNYZ}.
If $i_{j+1}>i_j$ at an internal index, then $j<C$, so
\[
 j\le C-1<\frac{2\alpha(F)-1}{3}<\frac{17\alpha(F)}{25}.
\]
Since $j$ is an integer, Theorem~\ref{thm:main} gives $\Delta_j>0$.

For $0<t\le1$, put $g_j=t^ji_j$. An ascent $g_{j+1}>g_j$ implies
$i_{j+1}/i_j>1/t\ge1$, and therefore $\Delta_j>0$ by the first part.
The tilted determinant is
\[
 g_j^2-g_{j-1}g_{j+1}=t^{2j}\Delta_j>0.
\]
All coefficients on the support remain positive because $t>0$.
The barrier condition implies unimodality, as recalled in the introduction.
\end{proof}
The same order threshold proves the prefix and barrier conclusions.
The optional LVZ-dependent explicit bound belongs to the quantitative
supplement and is not used in this proof.

\paragraph{Why ordinary log-concavity is not claimed.}
Kadrawi and Levit \cite{KL26} give infinite families of trees whose
independence polynomials are not log-concave, extending the order-26
counterexamples of Kadrawi, Levit, Yosef, and Mizrachi \cite{KLYM}.
For one of their basic 26-vertex trees,
\[
 (i_{12},i_{13},i_{14})=(2979,51,1),\qquad
 \Delta_{13}=51^2-2979=-378.
\]
This defective triple is strictly decreasing and therefore does not
violate the BL implication. The infinite families show that ordinary
log-concavity cannot replace BL in a theorem valid for all sufficiently
large trees.

Galvin \cite{GalvinFailures} shows that failures need not remain a
bounded distance below the independence number. Bautista-Ramos
\cite{BRMultiple2025} constructs trees with arbitrarily many failing
indices, while Bautista-Ramos, Guill\'en-Galv\'an, and G\'omez-Salgado
\cite{BRGG2026} obtain infinite families with two and three consecutive
failures; their four- and five-consecutive-failure collections are finite.
Bautista-Ramos \cite[Corollary~3]{BautistaSpherical} subsequently constructs
subcubic trees with arbitrarily many ordinary log-concavity failures at
sufficiently large parameters. Taking those parameters also beyond the
threshold in Theorem~\ref{cor:barrier} shows that strictly BL subcubic
trees can have arbitrarily many such failures across the family. The
locations certified by that construction are discussed in the optional
quantitative supplement.

\paragraph{Perfect matchings do not prevent severe tail failures.}
Fan \cite[Theorems~1.2 and 1.3]{Fan2026} constructs trees with a perfect
matching, so that $|V(T)|=2\alpha(T)$, with a failure at any prescribed
positive distance below $\alpha(T)$, or with simultaneous failures at
$\alpha-1,\alpha-3,\ldots,\alpha-(2t-1)$. The normalized ratios at the
specified failures tend to zero as the scale parameter grows. These
examples refute the failure-location implication of Ramos and Sun
\cite[Conjecture 4.2]{RamosSun2025}, not the barrier conjecture.

The specified failures lie on strictly decreasing triples. Indeed,
for a forest with $\alpha=m$, extension counting gives
$(j+1)i_{j+1}\le2(m-j)i_j$: the residual after an independent $j$-set
has independence number at most $m-j$ and at most $2(m-j)$ vertices.
At $j=m-r$ this implies
\[
 \frac{i_j}{i_{j-1}}\le\frac{2(r+1)}{m-r},\qquad
 \frac{i_{j+1}}{i_j}\le\frac{2r}{m-r+1}.
\]
Both bounds are below one when $m>3r+2$. In Fan's prescribed-distance
family, $r=q-1$ and $m=(3q+1)p+q+1\ge4r+6$ for $p\ge1$.
In his simultaneous family, $r\le2t-1$ and
$m=t(3+7p)\ge10t$, so the same conclusion holds. This controls those
specified triples; it does not establish BL at every rank for every
parameter in either family.

Combining Fan's simultaneous-failure theorem with
Theorem~\ref{cor:barrier} gives the following further consequence:
\emph{for every integer $t\ge2$ and every $\varepsilon>0$, there exists
a strictly BL tree with a perfect matching and distinct indices
$j_s=\alpha-(2s+1)$, $0\le s<t$, such that}
\[
 0<\frac{i_{j_s}^2}{i_{j_s-1}i_{j_s+1}}<\min\{1,\varepsilon\}
 \qquad(0\le s<t).
\]
To prove it, fix $t$ and choose Fan's scale large enough both for its
finite collection of ratios to be below $\min\{1,\varepsilon\}$ and
for the tree order to exceed the threshold of Theorem~\ref{cor:barrier}.
This consequence uses Fan's construction as an additional input; none
of the preceding theorems depends on it. A perfect matching is not
well-coveredness: for a nontrivial well-covered tree its matching edges
must all be pendant, as proved in Section~\ref{app:corona}.

\subsection{Palindromic independence polynomials}\label{subsec:palindromic}
A polynomial $a(x)=\sum_{j=0}^d a_jx^j$ is \emph{palindromic} if
$a_j=a_{d-j}$ for $0\le j\le d$, or equivalently $x^d a(1/x)=a(x)$.
For an independence polynomial, $d=\alpha(G)$, not $|V(G)|$.
This is coefficient symmetry, not symmetry of a drawing or of rooted
branching levels; see \cite{LMPal2007,LMSym2011,HKV}.

\begin{lemma}[Reflection and the barrier condition]\label{lem:palindromic-bl}
For a positive palindromic sequence $a_0,\ldots,a_d$,
\[
 \Delta_j:=a_j^2-a_{j-1}a_{j+1}=\Delta_{d-j}\qquad(1\le j<d).
\]
The sequence is BL if and only if it is log-concave. Strict log-concavity
through $\lfloor d/2\rfloor$ implies strict log-concavity at every internal
index.
\end{lemma}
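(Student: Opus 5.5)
The plan is to prove the three assertions in turn: the reflection identity first, then the two consequences, which both follow from it directly.

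\emph{Reflection identity.} Substitute $a_i=a_{d-i}$ into the definition:
\[
 \Delta_j=a_{d-j}^2-a_{d-j+1}a_{d-j-1}=\Delta_{d-j}.
\]
This is valid for $1\le j<d$, since then $1\le d-j<d$, so both indices are internal.

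\emph{BL if and only if LC.} Log-concavity implies BL trivially, since BL demands the same inequality at only some of the indices. For the converse, assume BL and fix an internal index $j$.
\begin{itemize}
\item If $a_{j+1}>a_j$, the barrier condition gives $\Delta_j\ge0$ directly.
\item Otherwise $a_{j+1}\le a_j$. Look at the reflected index $k=d-j$. By palindromicity, $a_{k+1}=a_{j-1}$ and $a_k=a_j$.
\item If $a_{j-1}>a_j$, then $k$ is an ascent, so BL gives $\Delta_k\ge0$. By the reflection identity, $\Delta_j=\Delta_k\ge0$.
\item The remaining case is $a_{j-1}\le a_j$ and $a_{j+1}\le a_j$. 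Then $a_{j-1}a_{j+1}\le a_j^2$ immediately, because all terms are positive.
\end{itemize}
So $\Delta_j\ge0$ at every internal index, which is log-concavity.

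\emph{Strict LC through $\lfloor d/2\rfloor$ gives strict LC everywhere.} Take an internal index $j>\lfloor d/2\rfloor$. Then $d-j<d-\lfloor d/2\rfloor=\lceil d/2\rceil$, so $d-j\le\lfloor d/2\rfloor$. Also $d-j\ge1$, so $d-j$ is an internal index lying in the hypothesised range. Hence $\Delta_j=\Delta_{d-j}>0$.

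No step is a real obstacle. The only point needing care is the index bookkeeping: the reflected index must stay internal, and the ascent at $d-j$ must correspond exactly to a descent $a_{j-1}>a_j$. I would also note that the same case analysis with strict inequalities shows that strict BL is equivalent to strict LC, except in the flat case $a_{j-1}=a_j=a_{j+1}$.
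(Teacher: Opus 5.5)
Your proposal is correct and follows essentially the paper's argument. You obtain the reflection identity by substitution, you turn a descent $a_{j-1}>a_j$ into an ascent at the reflected center $d-j$ (the paper argues by contradiction from $\Delta_j<0$, you by a direct three-case split), and you reflect $j$ to $\min\{j,d-j\}\le\lfloor d/2\rfloor$ for the strict statement; your closing remark about flat triples is also right and matches the paper's comment that a strictly BL symmetric sequence may have a longer modal plateau.
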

\begin{proof}
Reflection gives the determinant identity. Suppose the sequence is BL
and $\Delta_j<0$. Then $a_{j+1}\le a_j$, and
$a_{j-1}a_{j+1}>a_j^2$ implies $a_{j-1}>a_j$.
At the reflected center $d-j$ this is a strict ascent with negative
determinant, contradicting BL. The converse is immediate.
For the last assertion, reflect an internal center $j$ to
$\min\{j,d-j\}\le\lfloor d/2\rfloor$.
\end{proof}

\begin{corollary}[Large forests with palindromic independence polynomials]
\label{cor:palindromic-large}
Let $N$ be the common threshold in Theorems~\ref{cor:barrier}--\ref{thm:main}.
If $F$ is a forest of order at least $N$ and $I(F;x)$ is palindromic, then
$I(F;x)$ is strictly log-concave at every internal index. Writing
$d=\alpha(F)$, its modes are exactly
\[
 \{d/2\}\quad\text{if $d$ is even},\qquad
 \{(d-1)/2,(d+1)/2\}\quad\text{if $d$ is odd}.
\]
Every common positive coefficient tilt remains strictly log-concave.
\end{corollary}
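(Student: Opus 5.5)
The plan is to combine Theorem~\ref{thm:main} with Lemma~\ref{lem:palindromic-bl}. Let $F$ have order $n\ge N$ and set $d=\alpha(F)$. Theorem~\ref{thm:main} gives $\Delta_j(F)>0$ for every $1\le j\le\lfloor17d/25\rfloor$. Since $17/25>1/2$, we have $\lfloor d/2\rfloor\le\lfloor17d/25\rfloor$, so strict log-concavity holds at every internal index $j\le\lfloor d/2\rfloor$. The sequence $i_0,\ldots,i_d$ is positive and palindromic, so the last assertion of Lemma~\ref{lem:palindromic-bl} reflects any internal index $j$ to $\min\{j,d-j\}\le\lfloor d/2\rfloor$. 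This gives $\Delta_j>0$ at every internal index. Small cases such as $d\le1$ have no internal index and are vacuous. One could instead invoke Theorem~\ref{cor:barrier} together with the BL$\Leftrightarrow$LC part of the lemma, but that route gives only the weak inequality, so the direct prefix route is preferable.

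For the modes, I would argue with the ratios $r_k=i_k/i_{k-1}$. Strict log-concavity at every internal index says $r_{k+1}<r_k$ for $1\le k<d$, so the ratios are strictly decreasing. Palindromy gives $r_{d+1-k}=i_{d+1-k}/i_{d-k}=i_{k-1}/i_k=1/r_k$.

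\emph{Case $d$ even, $d=2m$.} The pairing $k\leftrightarrow d+1-k$ has no fixed point, and it sends $r_m$ to $r_{m+1}=1/r_m$. Strict decrease gives $r_m>r_{m+1}=1/r_m$, hence $r_m>1$. Therefore $r_k>1$ for all $k\le m$ and $r_k<1$ for all $k\ge m+1$, so the unique mode is $m=d/2$.

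\emph{Case $d$ odd, $d=2m+1$.} The index $m+1$ is fixed by the pairing, so $r_{m+1}=1/r_{m+1}$, and positivity forces $r_{m+1}=1$. That is, $i_m=i_{m+1}$. Strict decrease then gives $r_k>1$ for $k\le m$ and $r_k<1$ for $k\ge m+2$, so the modes are exactly $\{m,m+1\}$.

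For tilts by $t>0$, put $g_j=t^ji_j$. Then $g_j^2-g_{j-1}g_{j+1}=t^{2j}\Delta_j>0$, and the coefficients remain positive. The tilt claim concerns only log-concavity, not the location of the modes, so no tilted analogue of the mode computation is needed. No step poses a real obstacle. The only points needing care are the inequality $\lfloor d/2\rfloor\le\lfloor17d/25\rfloor$ and the odd-$d$ fixed point of the reflection.
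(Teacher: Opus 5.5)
Your proposal is correct and follows essentially the same route as the paper. It uses the half-prefix from Theorem~\ref{thm:main} (valid since $17/25>1/2$), reflection via Lemma~\ref{lem:palindromic-bl}, strictly decreasing adjacent ratios with $r_{d+1-k}=1/r_k$ to locate the modes, and the factor $t^{2j}$ for tilts. Your remark that strict BL alone would not yield strictness also matches the paper's own comment on modal plateaus.
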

\begin{proof}
Theorem~\ref{thm:main} covers every center through $\lfloor d/2\rfloor$,
since $17/25>1/2$. Reflect by Lemma~\ref{lem:palindromic-bl}.
Strictly decreasing adjacent ratios and coefficient symmetry give the
stated unweighted modes. A common tilt multiplies $\Delta_j$ by $t^{2j}>0$.
\end{proof}
The strict conclusion uses the protected half-prefix, not strict BL alone:
a positive symmetric sequence may be strictly BL and have a longer modal
plateau. The existence constructions of Hibi, Kara, and Vien
\cite[Theorem~0.1]{HKV} and the present result address different quantifiers:
they construct symmetric unimodal trees at the stated orders and degrees,
whereas this corollary applies to every sufficiently large forest with
coefficient symmetry.

\section{The corona transfer and well-covered forests}\label{app:corona}

The application to well-covered forests is proved here in full.
Log-concavity of cumulative coefficients is log-concavity, on the
integer lattice, of the distribution function of the normalized
coefficient sequence; for
general distribution-function results, see Finner and Roters
\cite{FinnerRoters1993}. Figure~\ref{fig:corona-decomposition}
illustrates the distinction between the base order, the nonisolated order,
and additional isolated vertices.
\input{figures/corona_decomposition.tex}

The sequence transfer also appears in the signed-ascent companion \cite[Appendix on the leaf-corona transform]{LevitCorona}.
We reproduce its proof so that the consequence below has no proof dependency
on that companion's other results. The leaf-corona identity and the
well-covered-tree characterization are classical
\cite[Theorem~3 and the paragraph preceding Theorem~6]{LM2004}; see also \cite{FHN,BC2018}.
Levit and Mandrescu \cite{LM1999} also characterized well-covered trees
as well-covered spiders or internal edge-joins of well-covered spiders.
Here a spider is a tree with at most one vertex of degree at least three,
and an internal edge-join adds an edge between vertices of degree at
least two in otherwise disjoint trees. This structural antecedent is distinct from
the corona-polynomial identity; the corona representation needed below
is proved in full.
The term \emph{corona} here means the graph operation $H\circ K_1$, not the
union of maximum independent sets sometimes denoted $\operatorname{corona}(H)$.

\begin{lemma}[BL gives log-concave cumulative coefficients]\label{lem:corona-cumulative}
Let $a_0,\ldots,a_d>0$ be BL, and put $a_k=0$ for $k>d$ and
$A_k=\sum_{i=0}^k a_i$. The sequence $(A_k)_{k\ge0}$ is log-concave,
strictly at $1\le k\le d$.
\end{lemma}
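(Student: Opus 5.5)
We need $A_k^2\ge A_{k-1}A_{k+1}$ for all $k\ge1$, with strict inequality for $1\le k\le d$. Writing $A_{k+1}=A_k+a_{k+1}$ and $A_{k-1}=A_k-a_k$, we get the identity
\[
 A_k^2-A_{k-1}A_{k+1}=A_k a_k - A_k a_{k+1} + a_k a_{k+1}
 = A_k(a_k-a_{k+1})+a_ka_{k+1}.
\]
We split into cases according to whether $k$ is an ascent. For $k>d$ we have $a_k=a_{k+1}=0$, so the expression is $0$ and log-concavity holds with equality. For $k=d$ the expression is $A_da_d>0$. For $1\le k<d$ with $a_{k+1}\le a_k$, the first term is nonnegative and $a_ka_{k+1}>0$, so strict positivity follows.

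The remaining case is an ascent $a_{k+1}>a_k$ with $1\le k<d$. Here BL must carry the argument, and this is the main step. We need
\[
 A_k(a_{k+1}-a_k)<a_ka_{k+1}.
\]
Since $r_{i+1}\le\max\{1,r_i\}$ and $r_{k+1}>1$, induction downward shows that $r_{i}\ge r_{k+1}>1$ for all $1\le i\le k+1$. Indeed, if $r_{i+1}>1$ then $\max\{1,r_i\}\ge r_{i+1}>1$ forces $r_i\ge r_{i+1}$. So the whole prefix $a_0,\ldots,a_{k+1}$ is strictly increasing, with ratios at least $\rho:=r_{k+1}>1$. Hence $a_{k-i}\le a_k\rho^{-i}$, and
\[
 A_k\le a_k\sum_{i=0}^{k}\rho^{-i}<a_k\frac{\rho}{\rho-1}.
\]
Multiplying by $a_{k+1}-a_k=a_k(\rho-1)$ gives $A_k(a_{k+1}-a_k)<a_k^2\rho=a_ka_{k+1}$. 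This is exactly the required strict inequality. The geometric sum is finite, so the strict inequality holds even for $k=1$.

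The key point to get right is the backward propagation of the ascent through the equivalent ratio form of BL recalled in the introduction, together with the fact that only non-strict BL is needed here. Strictness of the cumulative inequality comes from the strict truncation of the geometric series, not from strict BL. The only other care needed is the boundary bookkeeping at $k=d$ and beyond.
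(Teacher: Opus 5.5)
Your proof is correct and follows the paper's argument: the same determinant identity (your $A_k(a_k-a_{k+1})+a_ka_{k+1}$ equals the paper's $a_kA_k-a_{k+1}A_{k-1}$), the same split into nonascents, where positivity is immediate, and ascents, and the same backward propagation of BL giving $a_1/a_0\ge\cdots\ge a_{k+1}/a_k>1$. The only difference is cosmetic, in the final summation: the paper compares termwise, $a_{k+1}a_i\le a_ka_{i+1}$, to get $a_{k+1}A_{k-1}\le a_k(A_k-a_0)<a_kA_k$ with strictness from $a_0>0$, whereas you dominate the prefix by a geometric series with ratio $\rho$ and take strictness from truncating that series.
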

\begin{proof}
Its determinant is $A_k^2-A_{k-1}A_{k+1}=a_kA_k-a_{k+1}A_{k-1}$.
If $a_{k+1}\le a_k$ and $k\le d$, this is at least $a_k^2>0$.
Otherwise, the BL implication, applied backward from the actual ascent,
gives $a_1/a_0\ge\cdots\ge a_{k+1}/a_k>1$. Hence
$a_{k+1}a_i\le a_ka_{i+1}$ for $0\le i<k$, and
$a_{k+1}A_{k-1}\le a_k(A_k-a_0)<a_kA_k$.
For $k>d$ the cumulative coefficients are constant.
\end{proof}

Classical binomial-transform preservation starts from a log-concave
input sequence; see, for example, Zhu
\cite[Theorems~1.1--1.2]{BZhuTransform2016}. The hypothesis below instead
concerns its cumulative coefficients. We give a direct proof, including
the strict terminal determinant, rather than infer the result from an
ordinary binomial-transform statement.

\begin{theorem}[Cumulative-coefficient corona transfer]\label{thm:corona-transfer}
Let $a(x)=\sum_{k=0}^d a_kx^k$, where $a_0>0$ and $a_k\ge0$.
If its cumulative coefficients are log-concave, then, for every integer
$m\ge d$, the polynomial
\[
 \mathcal C_m[a](x)=(1+x)^m a\!\left(\frac{x}{1+x}\right)
             =\sum_{k=0}^d a_kx^k(1+x)^{m-k}
\]
has positive coefficients in degrees $0,\ldots,m$ and is strictly
log-concave at all its internal indices. In particular, this holds for
every positive BL input.
\end{theorem}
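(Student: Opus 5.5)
The approach is to rewrite $\mathcal C_m[a]$ so that the cumulative coefficients $A_k=\sum_{i\le k}a_i$ appear directly, and then to prove log-concavity by an induction in which every step multiplies by $1+x$ or appends a top coefficient.

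Write $y=x/(1+x)$, so that $1-y=1/(1+x)$, and use the formal identity $a(y)=(1-y)\sum_{k\ge0}A_ky^k$, where $A_k=A_d$ for $k\ge d$. The tail sums to $A_dy^d/(1-y)$. This gives
\[
 \mathcal C_m[a](x)=\sum_{k=0}^{d-1}A_kx^k(1+x)^{m-1-k}+A_dx^d(1+x)^{m-d}.
\]
Coefficientwise this is Abel summation against Pascal's rule $\binom{m-k}{j-k}-\binom{m-k-1}{j-k-1}=\binom{m-k-1}{j-k}$. Since every $A_k\ge a_0>0$ and every term has nonnegative coefficients, positivity in degrees $0,\ldots,m$ is immediate. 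Degrees $0,\ldots,d-1$ come from the $A_0(1+x)^{m-1}$ term, and degrees $d,\ldots,m$ from $A_dx^d(1+x)^{m-d}$.

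Next, factor the displayed form as $(1+x)^{m-d}P_d(x)$, with
\[
 P_e(x)=\sum_{k<e}A_kx^k(1+x)^{e-1-k}+A_ex^e .
\]
One checks $P_e=(1+x)P_{e-1}-A_{e-1}x^e+A_ex^e$. Hence the coefficients of $P_e$ are those of $(1+x)P_{e-1}$ in degrees $<e$, and the top coefficient is $A_e$. Strict log-concavity of a positive sequence without internal zeros is preserved by multiplication by $1+x$: the standard Newton-type computation for the convolution with $(1,1)$ gives a sum of the old nonnegative determinants plus a positive cross term. So the factor $(1+x)^{m-d}$ is harmless, and the theorem reduces to strict log-concavity of $P_d$ for every $d$, which I will prove by induction on $d$.

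Only the two highest determinants of $P_e$ are new at each induction step; the lower ones are inherited from $(1+x)P_{e-1}$. Write $p^{(e)}_j$ for the coefficients of $P_e$. Then $p^{(e)}_e=A_e$, and $p^{(e)}_{e-1}$ and $p^{(e)}_{e-2}$ are explicit. For instance, $p^{(e)}_{e-1}=A_{e-1}+p^{(e-1)}_{e-2}$, and an easy induction gives $p^{(e)}_{e-1}=A_0+\cdots+A_{e-1}$. The needed inequalities at indices $e-1$ and $e-2$ should follow from $A_{e-1}^2\ge A_{e-2}A_e$, and more generally from the decreasing ratios $A_{k+1}/A_k$. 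The $A_k$ are positive and log-concave, so these ratios chain. This is the step I expect to be the main obstacle: bounding the new top determinant
\[
 \bigl(p^{(e)}_{e-1}\bigr)^2-p^{(e)}_{e-2}A_e
\]
strictly from below, uniformly in $e$. My first attempt will expand $p^{(e)}_{e-2}$ as $\sum_k A_k\binom{e-1-k}{e-2-k}$ and compare term by term with $\bigl(\sum_kA_k\bigr)^2$, using $A_kA_e\le A_{k+1}A_{e-1}$. If the bookkeeping fails there, I will fall back on the classical total-positivity proof that the transform $b\mapsto\sum_kb_kx^k(1+x)^{N-k}$ preserves log-concavity. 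That proof writes $c_j^2-c_{j-1}c_{j+1}$ as a symmetric sum over pairs $k<l$ of products of $2\times2$ minors of $(A_k)$ and of the kernel $\binom{N-k}{j-k}$, which is TP$_2$; I would then treat the single exceptional top term $A_dx^d(1+x)^{m-d}$ separately.

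The final clause of the theorem follows from Lemma~\ref{lem:corona-cumulative}, which turns a positive BL input into log-concave cumulative coefficients.
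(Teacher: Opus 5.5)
Your skeleton is the paper's own. The identity $P_e=(1+x)P_{e-1}+a_ex^e$ is exactly the paper's recursion $\mathcal C_m[a]=(1+x)\mathcal C_{m-1}[a_0,\ldots,a_{m-1}]+a_mx^m$. Your Abel-summed values $p^{(e)}_{e-1}=\sum_{k<e}A_k$ and $p^{(e)}_{e-2}=\sum_{k\le e-2}(e-1-k)A_k$ are the paper's $B_{m-1},B_{m-2}$ rewritten in cumulative form. Splitting off $(1+x)^{m-d}$ merely replaces the paper's zero-padding. Only the determinant at index $e-1$ is new: the one at $e-2$ uses coefficients $e-3,e-2,e-1$, which are unchanged from $(1+x)P_{e-1}$.

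The genuine gap is that your proof stops at the one inequality that carries all the content,
\[
 A_e\sum_{k=0}^{e-2}(e-1-k)A_k<\Bigl(\sum_{k=0}^{e-1}A_k\Bigr)^2 .
\]
The comparison you sketch does not establish it. The single exchange $A_kA_e\le A_{k+1}A_{e-1}$ sends all $e-1-k$ copies of $A_kA_e$ to the same product $A_{k+1}A_{e-1}$. That product occurs at most twice in the square, so no term-by-term matching results. Your total-positivity fallback does not remove the difficulty either. However you split the sum, the top term you propose to ``treat separately'' is exactly where the new determinant lives, and handling it requires this same inequality.

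The missing idea is to spread the multiplicity over pairs. Read $e-1-k$ as the number of ordered pairs $(i,j)$ with $i,j\ge1$ and $i+j=e-k$. Nonincreasing ratios of the positive sequence $(A_l)$ give
\[
 A_eA_{e-i-j}\le A_{e-i}A_{e-j}\qquad(i,j\ge1,\ i+j\le e).
\]
To see this, compare the ratio products over the windows $\{e-j+1,\ldots,e\}$ and $\{e-i-j+1,\ldots,e-i\}$. Hence
\[
 A_e\sum_{k\le e-2}(e-1-k)A_k=\sum_{\substack{i,j\ge1\\ i+j\le e}}A_eA_{e-i-j}
 \le\sum_{\substack{i,j\ge1\\ i+j\le e}}A_{e-i}A_{e-j}
 <\sum_{i,j=1}^{e}A_{e-i}A_{e-j}=\Bigl(\sum_{l=0}^{e-1}A_l\Bigr)^2 .
\]
Strictness comes from the unused pair $i=j=e$, which contributes $A_0^2>0$. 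This is precisely the paper's tail-moment inequality $\E\binom R2=\sum_{i,j\ge1}S_{i+j}<(\E R)^2$. There the tail function of $c_r=a_{m-r}$ is $S_j=A_{m-j}/A_m$, and $S_{i+j}\le S_iS_j$ is log-concavity of the cumulative coefficients. With this step inserted, your argument is complete and essentially identical to the paper's.
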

\begin{proof}
We first prove a finite tail-moment inequality. Let $c_0,\ldots,c_m\ge0$,
$c_m>0$, $m\ge1$, with log-concave tail sums. Normalize them to the law
of a random variable $R$, and write $S_j=\Prob(R\ge j)$.
Then $S_0=1$, $S_m>0$, and $S_j=0$ for $j>m$.
Decreasing ratios of the positive tail sums give
$S_{i+j}\le S_iS_j$. Consequently,
\[
 \E\binom R2=\sum_{i,j\ge1}S_{i+j}
       <\sum_{i,j\ge1}S_iS_j=(\E R)^2.
\]
The first identity counts the $\binom r2$ positive pairs with $i+j\le r$.
Strictness follows from the pair $i=j=m$, for which $S_{2m}=0<S_m^2$.
Thus
\begin{equation}\label{eq:corona-tail-moment}
 \left(\sum r c_r\right)^2>
       \left(\sum c_r\right)\left(\sum\binom r2 c_r\right).
\end{equation}
Pad $a$ with zeros through degree $m$ and induct on $m$. The cases
$m\le1$ have no internal indices. Truncating the input preserves
cumulative log-concavity, and
\[
 \mathcal C_m[a]=(1+x)\mathcal C_{m-1}[a_0,\ldots,a_{m-1}]+a_mx^m.
\]
The term $a_0(1+x)^m$ ensures positive coefficients throughout.
Multiplication by $1+x$ preserves strict log-concavity: for
$q_j=b_j+b_{j-1}$, with $b$ zero outside its positive interval support,
\[
 \Delta_j(q)=\Delta_j(b)+\Delta_{j-1}(b)
                 +b_jb_{j-1}-b_{j-2}b_{j+1}.
\]
The last term is nonnegative by decreasing adjacent ratios, including
the zero endpoints. This proves all needed determinants except the
last one, since adding $a_mx^m$ changes no earlier internal triple.
Writing $B_j=[x^j]\mathcal C_m[a]$, the last three coefficients are
\[
 B_m=\sum a_k,\qquad B_{m-1}=\sum(m-k)a_k,\qquad
 B_{m-2}=\sum\binom{m-k}{2}a_k.
\]
Apply \eqref{eq:corona-tail-moment} to $c_r=a_{m-r}$.
Its tail sums are reversed input prefix sums and $c_m=a_0>0$.
This gives $B_{m-1}^2>B_{m-2}B_m$ and completes the induction.
The BL case follows from Lemma~\ref{lem:corona-cumulative}.
\end{proof}

\begin{lemma}[Classical well-covered-tree representation]\label{lem:wc-representation}
Every well-covered tree $T\ne K_1$ has the form $H\circ K_1$ for a tree
$H$, with $|V(T)|=2|V(H)|$. Conversely, every such corona is well-covered.
\end{lemma}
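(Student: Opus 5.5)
We prove the two directions separately. The forward direction is the classical structural argument and rests on the pendant-matching property of well-covered trees. The converse is a direct check on maximal independent sets of the corona.

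\emph{Converse.} Let $H$ be a tree and $T=H\circ K_1$, where each $v\in V(H)$ receives a private leaf $\ell_v$. The sets $\{v,\ell_v\}$ partition $V(T)$ into $|V(H)|$ edges, so every independent set meets each pair at most once. Hence $\alpha(T)\le|V(H)|$, and the leaf set attains this bound. Now let $S$ be a maximal independent set. If some pair $\{v,\ell_v\}$ were disjoint from $S$, then $\ell_v$ could be added, because its only neighbour $v$ is not in $S$; this contradicts maximality. So $S$ meets every pair and $|S|=|V(H)|=\alpha(T)$. Thus $T$ is well-covered with $|V(T)|=2|V(H)|$, and $T$ is a tree.

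\emph{Forward direction.} Let $T\ne K_1$ be a well-covered tree.

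\textbf{Step 1: every non-leaf is adjacent to a leaf.} Suppose some non-leaf $v$ has no leaf neighbour. Since $T\ne K_1$ is connected, $v$ has at least two neighbours, none of which is a leaf. Choose an independent set containing $v$ and extend it greedily to a maximal one. The plan is to compare it with a maximal independent set avoiding $v$ that is strictly larger, contradicting well-coveredness. To build that set, use the bipartite (forest) structure: $v$ and its neighbourhood $N(v)$ can be traded. The cleanest way I would try is the standard lemma that in a well-covered graph, for every vertex $v$, the graph $T-N[v]$ is well-covered with $\alpha(T-N[v])=\alpha(T)-1$. One then proves by induction on the order that the maximum matching can be taken to consist of pendant edges.

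\textbf{Step 2: the alternative route via K\"onig's theorem.} For a tree, $\alpha=n-\mu$. Let $M$ be a maximum matching and $L$ the vertices left unmatched by $M$. Each edge $xy\in M$ has at most one endpoint with a neighbour in $L$, for otherwise an augmenting path of length three would exist. Moreover, the maximum independent set can be chosen to contain exactly one endpoint of each matching edge together with all of $L$. The key claim is that $L=\varnothing$ and every matching edge is pendant. For a matching edge $xy$ with both $x,y$ of degree at least two, I would construct, using tree structure, a maximal independent set containing neither endpoint or missing some matching edge entirely. Such a set has fewer than $\alpha$ elements, contradicting well-coveredness. Making this construction is the \textbf{main obstacle}: one must choose neighbours $x'$ of $x$ and $y'$ of $y$ and extend $\{x',y'\}$ to a maximal independent set. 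Acyclicity gives $x'\not\sim y'$. One then needs the extension to lose a full pair, which I would arrange by choosing $x',y'$ as endpoints of other matching edges and counting pairs. With $L=\varnothing$ and all matching edges pendant, each edge of $M$ has a leaf endpoint. Setting $H$ to be the set of non-leaf endpoints, and taking care of the $K_2$ case, gives $T=H\circ K_1$. Here $H$ is a tree, since deleting leaves preserves connectivity, and $|V(T)|=2|V(H)|$.

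Alternatively, since the paper cites \cite[Theorem~3]{LM2004} and \cite{FHN} for exactly this characterization, I could present Step 2 only as a sketch and rely on those references for the forward direction.
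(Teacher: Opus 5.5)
Your converse is correct and is the same check the paper makes. The forward direction, however, is not proved as written. Step~1 is only a plan: the lemma that $T-N[v]$ is well-covered with $\alpha(T-N[v])=\alpha(T)-1$ is true, but the induction meant to follow it is never carried out. In Step~2 you assert $L=\varnothing$ without argument. You also leave the pendant-edge claim as an unresolved ``main obstacle,'' proposing to repair it by choosing $x',y'$ on other matching edges and counting pairs. Falling back on the literature for this direction is a citation, not a proof. These two claims are the whole content of the forward direction, so the proposal has a genuine gap there.

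The missing idea is one inequality, and it shows that your obstacle is illusory. Take a maximum matching $M$ with unmatched set $L$. Every independent set meets each edge of $M$ at most once and meets $L$ in at most $|L|$ vertices. By K\"onig it therefore has at most $|M|+|L|=n-\mu=\alpha(T)$ elements. Consequently, a maximal independent set that misses both ends of some edge of $M$, or misses some vertex of $L$, has at most $\alpha(T)-1$ elements, contradicting well-coveredness.

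Your own construction already produces such a set. A maximal independent set extending $\{x',y'\}$ contains neither $x$ nor $y$, since they are adjacent to $x'$ and $y'$. So the lost pair is $\{x,y\}$ itself, and it does not matter where $x',y'$ are matched. For $L=\varnothing$: a vertex $a\in L$ has a neighbour $x$, because $T\ne K_1$ is connected, and any maximal independent set containing $x$ misses $a$.

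With a perfect matching of pendant edges, your assembly of $H$ goes through. Outside $T=K_2$, each matching edge has exactly one leaf end, and no nonleaf has a second leaf neighbour, because that leaf would also have to be matched to it.

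The paper reaches the perfect matching without K\"onig. In a nontrivial tree both colour classes $P,Q$ are maximal independent sets, hence both have size $\alpha(T)$. Independence of $S\cup(Q\setminus N(S))$ for $S\subseteq P$ then gives Hall's condition $|N(S)|\ge|S|$. The paper then uses the same two-neighbour extension, with the count reduced to ``at most one vertex in every other matching edge.'' Your K\"onig route is equally valid once the inequality above is written down, and it handles $L=\varnothing$ and pendancy with the same count.
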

\begin{proof}
The characterization is classical \cite{FHN,LM2004}; we give its tree
proof including the small cases. In a nontrivial tree, both bipartition
classes are maximal independent sets. Well-coveredness makes both have
size $\alpha(T)$. If they are $L,R$, then
$S\cup(R\setminus N(S))$ is independent for $S\subseteq L$, giving
$|N(S)|\ge|S|$. Hall's theorem supplies a perfect matching.
Every matching edge must be pendant: if $uv$ is not, choose neighbors
$u'\ne v$ of $u$ and $v'\ne u$ of $v$. Acyclicity makes $u',v'$
distinct and nonadjacent. A maximal independent set extending
$\{u',v'\}$ misses both $u,v$ and has at most one vertex in every other
matching edge. Its size is at most $\alpha(T)-1$, a contradiction.
For $T=K_2$, use $H=K_1$. Otherwise, every nonleaf has exactly one leaf
neighbor, and the induced subgraph of nonleaves is connected and acyclic.
It is $H$. Conversely, every maximal independent set in $H\circ K_1$
contains exactly one vertex from each old-vertex/private-leaf pair.
\end{proof}

\begin{proof}[Proof of Theorem~\ref{cor:eventual-wc}]
A maximal independent set in a disjoint union restricts to a maximal
independent set in each component, and any such componentwise choices
combine to a maximal independent set. Thus a forest is well-covered
if and only if each component is well-covered. Apply
Lemma~\ref{lem:wc-representation} to every nontrivial component to write
\[
 F=(H\circ K_1)\sqcup sK_1,\qquad s=z(F),\qquad m=|V(H)|.
\]
The $m$ pendant edges form a perfect matching of the nonisolated part,
and no matching can have more than $m$ edges there. Hence
$\mu(F)=m$ and $|V(F)|-z(F)=2m$, proving the equivalence of the hypotheses.
For any graph $H$ of order $m$, conditioning on its selected independent
set gives the classical identity \cite[Theorem~3]{LM2004}
\begin{equation}\label{eq:corona-identity}
 I(H\circ K_1;x)=(1+x)^m I\!\left(H;\frac{x}{1+x}\right).
\end{equation}
Consequently,
\[
 I(F;x)=(1+x)^{m+s}I\!\left(H;\frac{x}{1+x}\right).
\]
If $\mu(F)\ge N$, Theorem~\ref{cor:barrier} gives BL for the
independence polynomial of the whole base forest $H$.
Apply Theorem~\ref{thm:corona-transfer} to $a(x)=I(H;x)$ with
transform order $m+s\ge\alpha(H)$ to obtain full strict log-concavity
of $I(F;x)$. Multiplying coefficients by $t^k$ multiplies their
determinants by $t^{2k}$, so every common $t>0$ is allowed. When there
are no isolated vertices, $|V(F)|=2\mu(F)$, giving the stated tree
and isolate-free-forest specializations.
\end{proof}

\paragraph{Isolated vertices and scope.}
The matching-number hypothesis concerns the nonisolated part, not total
order enlarged by isolated vertices. Adding isolated vertices leaves
$\mu(F)$ unchanged. For example, $K_2\sqcup sK_1$ has matching number
one at arbitrarily large orders; it is not a counterexample to
log-concavity, but it does not force the base above $N$.
This argument does not exclude a different large-total-order theorem
for well-covered forests. For any family of base trees known to be BL, the single-leaf corona
is strictly LC at every order. This applies, for example, to the binary
spherical family under the hypotheses and proof inputs of
\cite{LevitSpherical}. No generic
BL product closure is used.

\paragraph{Relation to prior results.}
Levit--Mandrescu \cite[Theorems~4, 5, and 7]{LM2004} proved LC for
several corona classes, including the coronas of stars and paths.
Brown--Cameron \cite[Theorem~2.4]{BC2018} prove strict LC after
sufficiently many iterated leafy extensions of each fixed graph; the
required number depends on the roots of that graph's polynomial.
Here one extension suffices uniformly over all sufficiently large
\emph{forest bases}. The published clique-corona theorem in
\cite{HLMP2026} instead guarantees LC when the attached clique size is
sufficiently large relative to the base. It is not used as an all-base,
fixed-$K_1$ theorem. The cumulative-coefficient transfer and the eventual forest theorem are
the two ingredients in the present application. The conclusion does not
assert log-concavity for well-covered forests of every order.

\section{Conclusion and open problems}\label{sec:conclusion}
Every sufficiently large forest is strictly barrier log-concave, and every
sufficiently large well-covered tree is strictly log-concave. The stronger
well-covered-forest statement uses matching number rather than total order
when isolated vertices are allowed. These results follow from a uniform
curvature theorem, completion of the known forest prefix, and the corona
transfer. Coefficient reflection also gives full strict log-concavity and
exact central modes for sufficiently large palindromic forests.
The principal question here is whether the order restriction can be removed
from the barrier conclusion. The all-order unweighted unimodality
statements in \cite{ZhangLi2026,LVZ} do not by themselves answer that
stronger question.

\begin{conjecture}[Barrier log-concavity for trees]\label{conj:tree-bl}
For every finite tree $T$ and every $1\le k<\alpha(T)$,
\[
 i_{k+1}(T)>i_k(T)\quad\Longrightarrow\quad
 i_k(T)^2\ge i_{k-1}(T)i_{k+1}(T).
\]
\end{conjecture}
Equivalently, the independence polynomial of every tree is unimodal under
\emph{every common downward activity}: $I(T;tx)$ is unimodal for each
$0<t\le1$. For a positive sequence, this equivalence follows directly
from its adjacent ratios. A barrier failure has ratios
$r_{k+1}>\max\{1,r_k\}$; choosing
$1/r_{k+1}<t<\min\{1,1/r_k\}$ creates a strict valley. Conversely,
a downward tilt preserves the barrier implication and hence unimodality.
Ordinary log-concavity counterexamples do not refute
Conjecture~\ref{conj:tree-bl}: the conjecture allows curvature failures on
the decreasing portion, but excludes them at an ascending center.
On the subclass of trees with palindromic independence polynomials,
Lemma~\ref{lem:palindromic-bl} makes the BL assertion equivalent to
ordinary log-concavity. Corollary~\ref{cor:palindromic-large} proves the
strict eventual version without a new supplementary premise.

\begin{conjecture}[Barrier log-concavity for forests]\label{conj:forest-bl}
For every finite forest $F$, the polynomial $I(F;tx)$ is unimodal for
every $0<t\le1$; equivalently, $I(F;x)$ is barrier log-concave.
\end{conjecture}
The forest assertion includes the tree assertion. However, products of
barrier log-concave sequences cannot be assumed to retain that property,
so a tree theorem would require an additional argument to cover
disconnected forests.
The present theorem proves the forest assertion once the order is large
enough. The sharp-prefix companion~\cite{LevitDraft} supplies an
all-order guarantee at the first twelve coefficient indices.

\begin{problem}[Prescribed-prefix curvature]
With $q(F)=\lceil(2\alpha(F)-1)/3\rceil$, determine whether every forest
satisfies $\Delta_k(F)\ge0$ at each internal center $k<q(F)$.
\end{problem}
This asks for more than barrier log-concavity: the prescribed interval may
contain descending centers after the actual mode. The present paper
establishes the condition for sufficiently large forests.

\begin{problem}[Well-covered forests]
Does every well-covered forest have a log-concave independence polynomial?
\end{problem}
Conjecture~\ref{conj:forest-bl} and Section~\ref{app:corona} would give
the stronger strict conclusion at all internal indices, including after
adjoining isolated vertices. The matching-number threshold in
Theorem~\ref{cor:eventual-wc} is the sufficient condition proved
here; it is not asserted to be necessary.

\begin{problem}[Effective bounds and modes]
Obtain a substantially smaller explicit order threshold for the eventual
barrier theorem, and determine how accurately the first mode can be
located from structural data or a bounded number of activity moments.
\end{problem}
A useful mode algorithm must account for the cost of computing the
coefficients or moments it queries. An exact ratio-crossing characterization
alone is not a running-time bound. A structural proof of
Conjecture~\ref{conj:tree-bl} or~\ref{conj:forest-bl}, independent of
asymptotic local-limit estimates, would address both the shape question
and the search for more intrinsic mode information.

\subsection{Barrier log-concavity beyond forests}\label{subsec:beyond-forests}
Barrier log-concavity may also strengthen unimodality theorems outside
forest enumeration. The general background includes the survey of Stanley
\cite{Stanley1989}. The substantive setting is a natural nonsymmetric rank sequence for which ordinary log-concavity actually fails. The
following elementary examples use the original cardinality ranks,
including the empty object, and do not depend on the asymptotic forest
theorem.

\begin{example}[Order ideals above an antichain]\label{ex:ideal-bl}
Let $P_{r,\ell}$ consist of $r\ge2$ mutually incomparable elements below
an $\ell$-element chain, where $\ell\ge1$. An order ideal is a subset
containing every predecessor of each of its elements. Ordered by
inclusion, the ideals form the distributive lattice $J(P_{r,\ell})$;
see \cite[Chapter~3]{StanleyEC1}. Counting ideals by cardinality gives
\[
 R_{r,\ell}(x)=\sum_{A\in J(P_{r,\ell})}x^{|A|}
             =(1+x)^r+x^{r+1}+\cdots+x^{r+\ell}.
\]
Indeed, an ideal either is a subset of the antichain or contains the
whole antichain followed by a nonempty initial segment of the chain.
The coefficient sequence is strictly BL: before rank $r$ the internal
triples are binomial, and from rank $r$ onward the coefficients are all
one, so no new ascent occurs. Nevertheless, its determinant at rank
$r$ is $1-r<0$. For example, $R_{4,1}$ has coefficients
\[
 (1,4,6,4,1,1),\qquad \Delta_4=-3.
\]
Thus even a five-element poset gives a nonsymmetric, strictly BL ideal-rank
sequence that is not log-concave. The ratio at the failure is $1/r$,
which can be arbitrarily small in this family.
\end{example}

The example gives a structural preservation rule. If the ideal-rank
sequence of a finite nonempty poset $P$ is BL, adjoining a chain above
every element of $P$ preserves BL: the rank polynomial simply acquires
a terminal string of ones. Moreover, if $P$ has $b\ge2$ maximal
elements, the previous terminal
rank becomes an internal center with
determinant $1-b<0$. This follows because the coefficient of rank
$|P|-1$ is exactly the number of maximal elements. The statement also
holds with strict BL in place of BL. It is a particular operation on
posets, not a closure assertion for arbitrary sums or products of BL
polynomials.

\begin{example}[Clique counts with pendant vertices]\label{ex:clique-bl}
Let $G_s$ be obtained from $K_4$ by attaching $s\ge0$ pendant vertices
to one fixed vertex. Its clique polynomial counts complete vertex
subsets, including the empty set once; see \cite{HoedeLi1994}.
Every new vertex creates just its singleton clique and its incident edge.
Consequently,
\[
 C(G_s;x)=(1+x)^4+s x(1+x)
        =1+(s+4)x+(s+6)x^2+4x^3+x^4.
\]
Writing $c_k=[x^k]C(G_s;x)$, the three internal determinants are
\[
 \Delta_1=s^2+7s+10,\qquad
 \Delta_2=s^2+8s+20,\qquad
 \Delta_3=10-s.
\]
The first two are positive, while $c_4=1<c_3=4$. Hence the clique-rank
sequence is strictly BL for every $s\ge0$, but it is log-concave if and
only if $s\le10$. At $s=11$ the fifteen-vertex graph has sequence
\[
 (1,15,17,4,1),\qquad \Delta_3=-1.
\]
For $s>10$ the defective triple is strictly decreasing, and
\[
 \frac{c_3^2}{c_2c_4}=\frac{16}{s+6}\longrightarrow0.
\]
Thus arbitrarily severe log-concavity failure is possible in this
strictly BL family even with maximum clique size fixed at four.
These are also the face counts of the clique complex, indexed by
cardinality; they are not the independence coefficients of $G_s$.
\end{example}

Both examples remain unimodal under every common downward tilt,
although full log-concavity fails. Related independence-polynomial examples occur outside forests:
Levit and Kadrawi \cite{LevitKadrawi2026} construct unicyclic families
that are unimodal but not LC. Unimodality alone does not establish BL
for those families. Such a strengthening is not automatic, as the
following classical family demonstrates.

\paragraph{The Gaussian warning.}
Gaussian binomial polynomials are classically symmetric and unimodal;
see \cite[Introduction]{PakPanova2013}. Nevertheless,
\[
 \binom{4}{2}_{\!x}=1+x+2x^2+x^3+x^4
\]
has coefficients $(1,1,2,1,1)$ and fails BL at its first internal center:
$1^2-1\cdot2=-1$ on an ascent. At $t=3/4$, its downward tilt starts
with the strict valley $1>3/4<9/8$. More generally,
$\binom{a+b}{a}_{x}$ starts with $1,1,2$ whenever $a,b\ge2$, because
there are two partitions of two that fit in the $a$-by-$b$
rectangle.
Thus this whole classical unimodal family fails BL, not just one small
member. Gaussian coefficients also count order ideals of a product of
two chains, so distributivity alone cannot imply the barrier property.
For positive palindromic sequences, Lemma~\ref{lem:palindromic-bl}
already shows that BL is equivalent to ordinary log-concavity. The most
distinctive applications of the intermediate property should therefore
be sought among nonsymmetric sequences.

\begin{problem}[Barrier strengthening of unimodality]\label{prob:barrier-strengthening}
Identify further natural nonsymmetric combinatorial rank sequences that
are barrier log-concave but not log-concave, and determine structural
conditions under which a known unimodality theorem extends to every
uniform downward tilt.
\end{problem}
The terminal-extension rule supplies one mechanism. The broader problem
is to control several defect ranks or obtain a structural preservation
principle beyond appending a nonincreasing tail.

\paragraph{Ancillary verification and optional quantitative material.}
The main article contains the proofs of all its stated results, with their
external inputs identified where used. The optional quantitative supplement
\cite{QuantitativeSupplement} supplies sharper low-activity estimates, an
explicit through-mean bound, an all-order small-tilt interval, and separately
conditional explicit barrier and well-covered-forest cutoffs. Its explicit
symmetric-forest bound and $\gamma$-positivity comparisons are refinements,
not premises of Corollary~\ref{cor:palindromic-large}.
The accompanying code checks 51 exact rational guards and finite
weighted-tree, martingale, and corona identities. From the extracted
source directory, run
\begin{verbatim}
python3 anc/run_checks.py --output-dir ../p1-checks
\end{verbatim}
The elementary rank examples in Section~\ref{subsec:beyond-forests} have
a separate exact checker:
\begin{verbatim}
python3 anc/beyond_forests/verify_examples.py
\end{verbatim}
These finite checks do not replace the analytic proofs. The external
LVZ computational and formal developments are not run by this command.
The exact certificates of the sharp-prefix companion \cite{LevitDraft}
are not premises of the main theorems here.

\section*{Use of AI tools}
ChatGPT was used for mathematical exploration and proof development,
code preparation and checking, literature comparison, and language
editing. The author is responsible for the mathematical content,
attribution, and computational materials. This statement does not
assert formal verification.

\end{document}


\maketitle
\begin{abstract}
This supplement provides the quantitative refinements accompanying
\emph{Barrier Log-Concavity of Large Forests}. It proves sharper
low-activity curvature with starting rank $31\,500\,000$, an explicit
through-mean cutoff, and an all-order interval of small coefficient tilts.
A separate section derives an explicit barrier cutoff from a precisely
specified additional LVZ input. The corresponding well-covered-forest
and symmetric-polynomial consequences, complete estimates, and numerical
margins are included. The principal eventual-BL and well-covered-forest
theorems in the main article do not depend on the refinements here.
\end{abstract}

\section{Statements, notation, and dependencies}
References prefixed \textup{M} refer to the accompanying main article
\cite{MainPaper}; references prefixed \textup{S} are local to this supplement.
For example, Theorem~\MainRef{cor:barrier} is its eventual strict-BL theorem.
This supplement uses results of the main article; its own additional
estimates are not premises of the main article. The main article proves the elementary extension and
mean bounds, bipartition damping, uniform central curvature, completed
prefix, and full corona transfer used below. All additional arguments
are supplied here; Section~\ref{sec:lvz} explicitly identifies its external
input and is not needed for the other refinements.

Zhang--Li \cite[Theorem~1.1]{ZhangLi2026} and Li--Vallier--Zhang
\cite[Theorem~1.1]{LVZ} both state all-order unweighted unimodality;
the latter gives a second proof building on the former. Neither headline
theorem is used here: the extra input in Section~\ref{sec:lvz} is the
fixed LVZ weighted inequality, not unweighted unimodality.

We use the main notation
\[
 Z_F(x)=I(F;x),\quad X=|I|,\quad
 p_j=i_j\lambda^j/Z_F(\lambda),\quad
 \eta=\E_\lambda X,\quad \sigma^2=\Var_\lambda X,\quad \nu=n\lambda.
\]
The physical activity is common unless stated otherwise; forests are
nonempty whenever variance is a divisor. The matching number is
$\mu(F)$, not the activity mass $\nu$. Set
\[
 \lambda_* = 1/3125,\qquad K=31\,500\,000.
\]
\begin{theorem}[Explicit low-fugacity curvature]\label{low:thm:main}
For every forest, if $0<\lambda\le\lambda_*$, $\nu\ge K$, and
$\eta=k$ is an integer, then
\[
 \sigma^3(2p_k-p_{k-1}-p_{k+1})>\frac1{2000}.
\]
Consequently $\Delta_j(F)>0$ at every integer rank
$K\le j\le n/3128$.
\end{theorem}

The even comparison in Section~\ref{low:sec:even} is the key local
estimate. In the real part of a product of characteristic functions,
odd errors contribute in pairs. After centroid conditioning, the
leading curvature error is $O(\nu^{-1/4})$ at block size
$\lceil(293/500)\nu^{3/8}\rceil$. We explicitly control the correlation between
the random conditional mean and the conditional characteristic function.
The variance estimate $\sigma^2\ge n\lambda/(1+3\lambda)^2$ supplies
a lower constant close to one at low activity. Together with the even
comparison, it gives the displayed rank threshold by rational inequalities.

\begin{corollary}[An explicit cutoff through the activity-one mean]
\label{central:cor:throughmean}
Set $N_0=3125\cdot2^{104}<2^{116}$. Every forest of order $n\ge N_0$
is strictly log-concave at every internal integer rank
$1\le j\le\eta_F(1)$.
\end{corollary}

The corollary combines Theorem~\MainRef{central:thm:central} with
Theorem~\ref{low:thm:main} and the elementary quadratic bound proved in
Section~\MainRef{sec:smallrank}; the assembly appears in
Section~\ref{sec:quantitativecompletion}. None of these arguments uses an all-forest
unimodality theorem or a finite enumeration of forests. Since
$\eta_F(1)\ge n/4$, the corollary includes all internal ranks $j\le n/4$.
The sharper low-activity estimate improves the explicit order cutoff
obtained from the central theorem alone.

\begin{corollary}[A uniform interval of downward tilts]\label{cor:smalltilt}
Let $B=2K-5=62999995$. For every forest and every $0<t\le1/B$,
\[
 t^{j+1}i_{j+1}>t^ji_j\quad\Longrightarrow\quad\Delta_j>0
 \qquad(1\le j<\alpha(F)).
\]
Thus $Z_F(tz)$ is strictly barrier log-concave and unimodal throughout
this interval.
\end{corollary}

These statements do not require External input~\ref{input:lvz}.
The optional explicit barrier conclusion is Corollary~\ref{cor:explicit-barrier};
it is not an explicit cutoff for the full $17\alpha/25$ prefix.

\section{Additional variance bounds}
\begin{lemma}[A covariance bound from single-vertex conditioning]\label{lem:scorevariance}
Let $G$ be a nonempty graph with $m$ edges, and let $X$ be its hard-core
independent-set size at activity $\lambda>0$, with mean $\eta$. Then
\[
 \operatorname{Var}X\ge
 \frac{\eta^2}{(1+\lambda)\eta+2m\lambda^2}.
\]
For a forest on $n$ vertices this implies
\[
 \operatorname{Var}X\ge
 \frac{n\lambda}{1+6\lambda+15\lambda^2+18\lambda^3}.
\]
\end{lemma}
\begin{proof}
Put $p=\lambda/(1+\lambda)$, $X_v=\1_{\{v\in I\}}$,
$Y_v=\1_{\{N(v)\cap I=\varnothing\}}$, and $R_v=X_v-pY_v$.
Conditional on all indicators except $X_v$, the conditional mean of
$X_v$ is $pY_v$, so $R_v$ is conditionally centered. In particular,
$\E R_v=0$ and $\E R_v^2=(1-p)\E X_v$.
If $u\ne v$ are nonadjacent, then $R_u$ is measurable with respect to
all indicators except $X_v$, giving $\E R_uR_v=0$.
If $uv$ is an edge, the identities
$X_uX_v=X_uY_v=X_vY_u=0$ instead give
$\E R_uR_v=p^2\E Y_uY_v$.

For $R=\sum_vR_v$, it follows that
\[
 \Var R=(1-p)\eta+2p^2\sum_{uv\in E(G)}\E Y_uY_v
       \le(1-p)\eta+2mp^2.
\]
Conditional centering also gives $\E X_uR_v=0$ for $u\ne v$ and
$\E X_vR_v=(1-p)\E X_v$. Hence
$\operatorname{Cov}(X,R)=(1-p)\eta$.
Cauchy--Schwarz proves
\[
 \Var X\ge\frac{(1-p)^2\eta^2}{(1-p)\eta+2mp^2}
         =\frac{\eta^2}{(1+\lambda)\eta+2m\lambda^2}.
\]
For forests, use $m\le n$ and the independently proved mean bound
$\eta\ge n\lambda/(1+3\lambda)$ in Lemma~\MainRef{lem:mean}.
The function $z\mapsto z^2/((1+\lambda)z+2n\lambda^2)$ is increasing
for $z>0$. Substitution gives the stated forest bound.
\end{proof}

\begin{lemma}[Forest extension variance]\label{lem:extensionvariance}
For every nonempty forest and every $\lambda>0$,
\[
 \boxed{\sigma^2\ge\frac{\eta}{1+3\lambda}
                 \ge\frac{n\lambda}{(1+3\lambda)^2}.}
\]
Consequently, if $0<\lambda\le1/3125$, then
$\sigma^2>(499/500)n\lambda$.
\end{lemma}
\begin{proof}
Use the extension averages $m_r=(r+1)i_{r+1}/i_r$, with
$m_{\alpha}=0$. The proof of Lemma~\MainRef{lem:extension} gives
$m_{r+1}\ge m_r-3$ at every supported rank, including the terminal
step. Thus $h_r=m_r+3r$ is nondecreasing.
Weighted extension counting gives the exact identities
\[
 \E m_X=\frac{\eta}{\lambda},\qquad
 \E(Xm_X)=\frac{\E(X(X-1))}{\lambda},\qquad
 \operatorname{Cov}(X,m_X)=\frac{\sigma^2-\eta}{\lambda}.
\]
If $X'$ is an independent copy of $X$, monotonicity of $h$ implies
\[
 0\le\frac12\E[(X-X')(h_X-h_{X'})]
   =\operatorname{Cov}(X,h_X)
   =\frac{\sigma^2-\eta}{\lambda}+3\sigma^2.
\]
Rearranging proves the first inequality, and Lemma~\MainRef{lem:mean}
proves the second. Finally,
$(1+3/3125)^2<500/499$ proves the displayed low-activity constant.
Both cited extension lemmas are proved directly by counting and Jensen's
inequality in Section~\MainRef{sec:smallrank}; they use no curvature theorem.
\end{proof}

The proof of Lemma~\ref{lem:extensionvariance} applies the monotone-extension covariance argument in
Zhang and Xu \cite[proof of Theorem 1.3, especially (8), (15)--(18)]{ZhangXu}
with the forest extension decrement $3$ in place of their maximum-degree
decrement $\Delta+1$. Their result proves the corresponding extremal
variance bound for graphs of bounded maximum degree; Davies, Sandhu,
and Tan \cite[Section 1.2]{DST} introduced that extremal variance problem.
The present forest estimate is uniform over unbounded maximum degree.
Lemma~\ref{lem:scorevariance} gives a separate covariance bound for general graphs.
No claim of an optimal variance inequality or of priority for the
general covariance method is required here.

\section{Root-conditioning estimates}\label{sec:roots}
Root a tree and let $T_v$ be the descendant subtree at $v$, of order $m_v$.
Let $r_v$ be its root occupation odds, $q_v=r_v/(1+r_v)$ its root occupation
probability, and
\[
 y_v=\log(\lambda/r_v)=\sum_{w\text{ child of }v}\log(1+r_w).
\]
In particular, $r_v=\lambda e^{-y_v}$, $0<r_v\le\lambda$, and $y_v\ge0$.
Let $\delta_v$ be the occupied-root conditional mean minus the absent-root
conditional mean of the total independent-set size in $T_v$. Define
$\gamma_v$ similarly for the two conditional variances. The usual occupation-odds recursion is also fundamental to Weitz's
tree method \cite{Weitz}. Its first two logarithmic derivatives give
\cite[(4.1)--(4.2)]{FLNYZ}
\begin{align}
 \delta_v&=1-\sum_w q_w\delta_w,\label{eq:delta-rec}\\
 \gamma_v&=-\sum_wq_w\gamma_w-
             \sum_wq_w(1-q_w)\delta_w^2.\label{eq:gamma-rec}
\end{align}
They follow as well by applying $\lambda\,\mathrm d/\mathrm d\lambda$ once
and twice to $\log r_v$. At a leaf, $\delta_v=1$ and $\gamma_v=0$.

\subsection{The low-fugacity profile}
\begin{lemma}\label{low:lem:roots}
For $0<\lambda\le\lambda_*$ and a rooted tree of order $m$,
\[
 q(1-q)\delta^2\le\frac73\lambda m^{1/3},\qquad
 q(1-q)\gamma^2\le8\lambda m^{2/3}.
\]
\end{lemma}
\begin{proof}
Put $p=6/5$. Since $r=\lambda e^{-y}$ and $\log(1+r)\ge q$,
\[
 \frac{yq^p}{\log(1+r)}\le yr^{p-1}\le e^{-1}=\rho<1,
 \qquad q^p\le\log(1+r).
\]
If $Q_v(w)$ is the product of the descendant $q$'s along the path from
$v$ to $w$, with $Q_v(v)=1$, put $C_0=(1-\rho)^{-1}<2$. Induction gives
\[
 \sum_{w\in T_v}Q_v(w)^p\le1+C_0y_v\le1+2y_v.
\]
Indeed, the left side is $1+\sum_iq_i^p\sum_{w\in T_i}Q_i(w)^p$.
By induction and the preceding inequalities, this is at most
$1+(1+C_0\rho)\sum_i\log(1+r_i)=1+C_0y_v$.
Unroll the mean recurrence and apply H\"older to obtain
\[
 |\delta_v|\le m_v^{1/6}(1+2y_v)^{5/6}.
\]
The maximum of $e^{-y}(1+2y)^{5/3}$ occurs at $y=7/6$ and is less than
$7/3$. Cubing this last claim reduces it to
$e^{7/2}>100000/3087$; the exponential series through degree ten proves
that rational inequality. Multiplication by
$q(1-q)\le\lambda e^{-y}$ proves the first estimate.

The variance estimate follows from the same pointwise bound on
$\delta$. Put $u=3/2$ and $L_0=3/32$. Using
$q<2^{-10}$ gives
\[
 \frac{q^u(1+y)}{\log(1+r)}
 \le q^{u-1}+\rho q^{u-p}\le L_0,
\]
and
\[
 \frac{q^u(1+2y)^{2u/p}}{\log(1+r)}
 \le\lambda_*^{1/2}\sup_{y\ge0}e^{-y/2}(1+2y)^{5/2}
 =2^{5/2}e^{-9/4}<1.
\]
H\"older with exponents $3/2,3$ therefore proves inductively
$|\gamma_v|\le2m_v^{1/3}(1+y_v)^{2/3}$, because
$2L_0^{2/3}+1<2$. Finally
$4e^{-y}(1+y)^{4/3}<8$, which gives the second estimate.
\end{proof}

\begin{remark}[Complementary activity regimes]
The squared low-activity root exponents $1/3$ and $2/3$ are both below
one, so their centroid sums decrease with the terminal cutoff. The even
comparison uses this decay to obtain the substantially smaller threshold
$K$. For the whole interval $0<\lambda\le1$, the direct martingale
argument in Section~\MainRef{sec:centralcurvature} instead controls cubic
responses and the total predictable variance clock, without a bound on
the maximum degree or a superlinear root-variance profile.
\end{remark}

\section{Centroid conditioning at low activity}\label{sec:centroid}
Fix an integer $b\ge1$. In any remaining component of size greater than $b$,
reveal a centroid's occupation. If it is absent, delete it; if occupied,
record one occupied vertex and delete its closed neighborhood. Repeat until
every remaining component has size at most $b$. Fix all tie-breaking rules
in advance. Conditional on a reveal history, the remaining components are
independent hard-core forests at the same fugacity. This is the centroid
scheme of \cite[Section 5]{FLNYZ}.

If $m_t$ denotes the component size at reveal $t$, every child component has
at most half the parent's size. For any $0<\beta<1$, charging
$m_t^{\beta-1}$ to its vertices gives, on every history,
\begin{equation}\label{eq:halving}
 \sum_t m_t^\beta\le\frac{n b^{\beta-1}}{1-2^{\beta-1}}.
\end{equation}
For a fixed vertex, its component sizes form a halving chain until deletion
or termination; summing the geometric series backward from its last
processed component proves \eqref{eq:halving}. This reasoning applies to
each initial component and hence to every forest.

Let $M_t=\E(X\mid\mathcal F_t)$ and $S_t=\Var(X\mid\mathcal F_t)$,
where $\mathcal F_t$ is the information before reveal $t$. Let $M,S$ be the
terminal values, and write $\mathcal F_{\mathrm{term}}$ for the terminal
information. In particular, $M_0=\eta$, $S_0=\sigma^2$, and $\E M=\eta$.
With the root quantities of the affected component,
\begin{align}
 M_{t+1}-M_t&=\delta_t(\xi_t-q_t),\label{eq:martM}\\
 S_{t+1}-S_t&=\gamma_t(\xi_t-q_t)-q_t(1-q_t)\delta_t^2.\label{eq:martS}
\end{align}
Here $\xi_t$ is the centroid occupation indicator. These are the conditional
mean and total-variance identities in \cite[(5.4)]{FLNYZ}. Pad the sequence
by zero increments to length $n$ if it terminates early.

\subsection{Terminal bounds at low activity}
In this subsection $0<\lambda\le\lambda_*$. Set
\[
 A=\frac{19}{3},\qquad L=\frac{499}{500},\qquad U=\frac54.
\]

\begin{lemma}\label{low:lem:inputs}
The following estimates hold:
\begin{align}
 \E(M-\eta)^2&\le A\nu b^{-2/3},\label{low:eq:M}\\
 S-\sigma^2&=Z-D,\qquad \E Z=0,\qquad
 \E Z^2\le40\nu b^{-1/3},\label{low:eq:ZD}\\
 0&\le D\le A\nu b^{-2/3}\quad\text{on every history},\label{low:eq:D}\\
 L\nu&\le\sigma^2\le U\nu,\qquad 0\le S\le U\nu,\qquad
 \E S\le\sigma^2,\label{low:eq:scale}\\
 |\chi(t)|&\le e^{-t^2/25}\quad(|t|\le\pi\sigma),
 \qquad\chi(t)=\E e^{\ii t(X-\eta)/\sigma}.\label{low:eq:damping}
\end{align}
In particular,
\[
 \E|S-\sigma^2|\le\sqrt{40\nu}\,b^{-1/6}+A\nu b^{-2/3}.
\]
\end{lemma}
\begin{proof}
The martingale increments in \eqref{eq:martM} are orthogonal.
Lemma~\ref{low:lem:roots} and \eqref{eq:halving} therefore give
\[
 \E(M-\eta)^2\le\frac{7/3}{1-2^{-2/3}}\nu b^{-2/3}<A\nu b^{-2/3}.
\]
The same calculation bounds the total nonnegative drift $D$ in
\eqref{eq:martS} on every history. The centered sum $Z$ in that identity
satisfies
\[
 \E Z=0,\qquad \E Z^2\le
 \frac8{1-2^{-1/3}}\nu b^{-1/3}<40\nu b^{-1/3}.
\]
Here $(7/3)/(1-2^{-2/3})<19/3$ follows from
$2^{-2/3}<12/19$, verified by cubing. This proves the first three
estimates, and Cauchy--Schwarz gives the bound on $\E|S-\sigma^2|$.

Lemma~\ref{lem:extensionvariance} gives
$\sigma^2\ge\nu/(1+3\lambda)^2>L\nu$.
For the upper bound use a separate centroid cutoff $256$.
Each terminal component of order $m$ has second moment at most
$m\lambda+m(m-1)\lambda^2$: for distinct vertices,
$\Prob(u,v\in I)\le\lambda^2$, by removing the two specified vertices
from a weighted independent set. Since $256^{2/3}>40$,
\[
 \sigma^2=\E S+\Var M
 \le\left(1+\frac{255}{3125}+\frac{19}{120}\right)\nu<U\nu.
\]
Apply this universal variance bound to the residual forest on each
fixed history to obtain $S\le U\nu$. Total variance gives
$\E S\le\sigma^2$. Finally, for $|t|\le\pi\sigma$, use
$\sin(|t|/(2\sigma))\ge |t|/(\pi\sigma)$ in
Lemma~\MainRef{lem:bipartition}, together with $\sigma^2\le U\nu$ and
$2U(22/7)^2(3126/3125)^3<25$, to prove the damping estimate.
\end{proof}

\subsection{A uniform normal-approximation consequence}
\begin{corollary}\label{thm:clt}
There is an absolute constant $C$ such that every nonempty forest,
$0<\lambda\le\lambda_*$, and $\nu\ge1$ satisfy
\[
 \sup_{z\in\mathbb R}\left|
 \Prob_\lambda\left(\frac{X-\eta}{\sigma}\le z\right)-\Phi_0(z)\right|
 \le C\nu^{-1/8},
\]
where $\Phi_0$ is the standard normal distribution function.
\end{corollary}

\begin{proof}
Use the low-activity constants in Lemma~\ref{low:lem:inputs}, and
choose $b=\lceil\nu^{3/8}\rceil$. Conditional on the terminal history, $X$
is a known integer plus a sum of independent random variables $X_i$ with
$0\le X_i\le b$. Their total conditional variance is $S$, and
\[
 \sum_i\E\!\left(\left|X_i-\E(X_i\mid\mathcal F_{\mathrm{term}})\right|^3\,\middle|\,\mathcal F_{\mathrm{term}}\right)\le bS.
\]
On $S\ge\sigma^2/2\ge\nu/8$, the Berry--Esseen inequality
\cite[Theorem 1.1, dimension one]{Raic} bounds the conditional normal
approximation error by $C b/\sqrt\nu$. For variance ratio $s\ge1/2$ and
shift $h$, the normal distribution functions satisfy
\[
 \sup_z|\Phi_0((z-h)/\sqrt s)-\Phi_0(z)|\le C(|h|+|s-1|).
\]
This follows by differentiating the normal distribution function in shift
and scale. Apply it with $h=(M-\eta)/\sigma$ and $s=S/\sigma^2$.
The complement $S<\sigma^2/2$ has probability at most
$2\E|S-\sigma^2|/\sigma^2$. Lemma~\ref{low:lem:inputs} therefore gives,
writing $d_K$ for Kolmogorov distance,
\[
 d_K\left(\frac{X-\eta}{\sigma},\mathcal N(0,1)\right)
 \le C\left(\frac b{\sqrt\nu}+b^{-1/3}
             +\nu^{-1/2}b^{-1/6}+b^{-2/3}\right).
\]
The choice of $b$ makes this at most $C\nu^{-1/8}$.
No step requires $\lambda$ to be bounded away from zero.
\end{proof}

\section{An even characteristic-function comparison}\label{low:sec:even}
\begin{lemma}\label{low:lem:even}
Let independent random variables $Y_i$ each have range at most $b$, put
$v_i=\Var Y_i$, $S=\sum_i v_i$, and
$\Phi(s)=\E\exp(\ii s\sum_i(Y_i-\E Y_i))$.
For $|s|\le1/b$,
\begin{align}
 |\operatorname{Im}\Phi(s)|
 &\le\frac{16}{85}bS|s|^3 e^{-11s^2S/24},\label{low:eq:imag}\\
 |\operatorname{Re}\Phi(s)-e^{-s^2S/2}|
 &\le b^2\left(\frac4{85}S s^4+\frac2{111}S^2s^6\right)
            e^{-11s^2S/24}.\label{low:eq:real}
\end{align}
\end{lemma}
\begin{proof}
Write $\phi_i=a_i+\ii d_i$ and $g_i=e^{-s^2v_i/2}$. Symmetrization and
$1-\cos z\ge11z^2/24$ for $|z|\le1$ give
$|\phi_i|\le e^{-11s^2v_i/24}$; also $g_i$ obeys this weaker damping.
The centered range bound implies $v_i\le b^2/4$ and
\[
 |d_i|\le b v_i|s|^3/6.
\]
Both remainders $a_i-(1-s^2v_i/2)$ and
$g_i-(1-s^2v_i/2)$ are nonnegative. Their respective upper bounds are
$b^2v_i s^4/24$ and $v_i^2s^4/8\le b^2v_i s^4/32$.
Consequently
\[
 |a_i-g_i|\le b^2v_i s^4/24.
\]

Telescope the product $\prod_i\phi_i$ against the real product
$\prod_i a_i$. Each imaginary term is bounded by $|d_i|$ times damped
remaining factors. The missing variance costs at most $e^{11/96}$.
Since $e^{11/96}\le96/85$, this proves \eqref{low:eq:imag}.

Next telescope against $\prod_i g_i$, retaining the real part. The real
individual errors contribute at most
\[
 \frac1{24}e^{11/96}b^2S s^4e^{-11s^2S/24}.
\]
Each imaginary individual error is paired with the imaginary part of a
preceding product. Applying the previous product telescope to that
preceding product bounds their total by
\[
 \frac1{36}e^{11/48}b^2s^6
       \sum_{j<i}v_iv_j\,e^{-11s^2S/24}
 \le\frac1{72}e^{11/48}b^2S^2s^6e^{-11s^2S/24}.
\]
This includes all correlations among the real and imaginary factors;
no independence beyond that of the $Y_i$ is being added.
Using $e^{11/48}\le48/37$ gives \eqref{low:eq:real}.
\end{proof}

\section{Local curvature at low fugacity}\label{sec:lowcurvature}
Throughout this section $0<\lambda\le\lambda_*$, and
$A=19/3$, $L=499/500$, $U=5/4$ are the constants in
Lemma~\ref{low:lem:inputs}. The good-event threshold is chosen explicitly;
this retains the conditional mean and variance errors while reducing
the uniform starting rank.

\begin{theorem}\label{low:thm:parametric}
Suppose $\eta=k$ is an integer. For any integer $b\ge376$, put
$T=\sigma/b$ and $p_j=\Prob(X=j)$. Then
\begin{align*}
 &\left|\sigma^3(2p_k-p_{k-1}-p_{k+1})-\frac1{\sqrt{2\pi}}\right|\\
 &\quad\le\frac{12863}{1000}b^{-2/3}
 +\frac{4417}{1000}\frac{b^2}{\nu}
 +\frac{9013}{1000}\frac{b^{2/3}}{\sqrt\nu}\\
 &\qquad+\frac{6423}{1000}\nu^{-1/2}b^{-1/6}
 +2563\nu^{1/2}b^{-10/3}+\nu^{-1}
 +\frac{43}{10}\left(T+\frac{25}{2T}\right)e^{-T^2/25}.
\end{align*}
\end{theorem}
\begin{proof}
Set
\[
 \gamma=\frac{81}{100},\qquad \delta=\frac{3409}{50000},
 \qquad G=\{S\ge\gamma\sigma^2\}.
\]
The exact rational inequality
\[
 376^2>\left(\frac{A}{L(1-\gamma-\delta)}\right)^3
\]
implies
$D\le A\nu b^{-2/3}<(1-\gamma-\delta)\sigma^2$.
Since $S-\sigma^2=Z-D$, the event $G^c$ forces
$Z<-\delta\sigma^2$. Chebyshev's inequality and
\eqref{low:eq:ZD} therefore give
\begin{equation}\label{low:eq:bad}
 \Prob(G^c)\le\frac{40}{\delta^2L^2}\nu^{-1}b^{-1/3}.
\end{equation}

On a fixed history set $W=M-\eta$, and let $\Phi$ be the conditional
characteristic function of $X-M$. The residual components are
conditionally independent and have ranges at most $b$, so
Lemma~\ref{low:lem:even} applies. For $s=t/\sigma$,
\begin{align*}
 |\operatorname{Re}(e^{\ii sW}\Phi(s))-e^{-s^2S/2}|
 &\le|\operatorname{Re}\Phi(s)-e^{-s^2S/2}|\\
 &\quad+\frac{s^2W^2}{2}e^{-s^2S/2}
       +|sW|\,|\operatorname{Im}\Phi(s)|.
\end{align*}
No independence between $W$ and $\Phi$ is assumed. Instead,
\eqref{low:eq:scale} gives
\begin{equation}\label{low:eq:cross}
 \E S^2\le U\nu\sigma^2,\qquad
 \E(|W|S)\le\sqrt{\E W^2\,\E S^2}
 \le\sqrt{AU}\,\nu\sigma b^{-1/3}.
\end{equation}
Put $a_0=11\gamma/24=297/800$. On $G$ the block damping is
at most $e^{-a_0t^2}$, and the conditional Gaussian damping is at
most $e^{-\gamma t^2/2}$. Moreover,
\[
 |e^{-t^2S/(2\sigma^2)}-e^{-t^2/2}|
 \le\frac{t^2}{2\sigma^2}|S-\sigma^2|e^{-\gamma t^2/2}.
\]
Averaging on $G$, using \eqref{low:eq:cross}, and charging at most
$2\Prob(G^c)$ on the remaining histories proves, for $|t|\le T$,
\begin{align*}
 |\operatorname{Re}\chi(t)-e^{-t^2/2}|
 &\le\left(\frac4{85L}\frac{b^2}{\nu}t^4
       +\frac{2U}{111L^2}\frac{b^2}{\nu}t^6
       +\frac{16\sqrt{AU}}{85L^{3/2}}
              \frac{b^{2/3}}{\sqrt\nu}t^4\right)e^{-a_0t^2}\\
 &\quad+\left(\frac{A}{L}b^{-2/3}
       +\frac{\sqrt{40}}{2L}\nu^{-1/2}b^{-1/6}\right)
           t^2e^{-\gamma t^2/2}+2\Prob(G^c).
\end{align*}
The coefficient $A/L$ includes both the conditional mean-phase error
and the variance drift.

At integer mean, Fourier inversion uses the real part:
\[
 \sigma^3(2p_k-p_{k-1}-p_{k+1})
 =\frac1{2\pi}\int_{-\pi\sigma}^{\pi\sigma}
      h_\sigma(t)\operatorname{Re}\chi(t)\,dt,
 \quad h_\sigma(t)=2\sigma^2(1-\cos(t/\sigma)).
\]
Here $0\le h_\sigma\le t^2$ and
$|h_\sigma-t^2|\le t^4/(12\sigma^2)$.
For even $r$, set
\[
 J_r(z)=\frac1{2\pi}\int_{\mathbb R}t^r e^{-zt^2}\,dt.
\]
The four principal integrated constants satisfy
\begin{align*}
 C_M&=\frac ALJ_4(\gamma/2)<\frac{12863}{1000},\\
 C_N&=\frac{\sqrt{40}}{2L}J_4(\gamma/2)<\frac{6423}{1000},\\
 C_B&=\frac4{85L}J_6(a_0)+\frac{2U}{111L^2}J_8(a_0)
       <\frac{4417}{1000},\\
 C_C&=\frac{16\sqrt{AU}}{85L^{3/2}}J_6(a_0)
       <\frac{9013}{1000}.
\end{align*}
These inequalities follow by rational arithmetic from
$\pi>333/106$ and
\[
 J_4(z)=\frac{3}{8z^2\sqrt{\pi z}},\quad
 J_6(z)=\frac{15}{16z^3\sqrt{\pi z}},\quad
 J_8(z)=\frac{105}{32z^4\sqrt{\pi z}}.
\]
The certificate replaces each square root by an upper rational
enclosure and verifies the comparison by squaring. The bounds
$333/106<\pi<22/7$ can themselves be checked from
$\pi=16\arctan(1/5)-4\arctan(1/239)$ using the alternating series
through ten and eleven terms.

The exceptional-history integral is at most
\[
 \frac{2\Prob(G^c)T^3}{3\pi}
 \le\frac{80U^{3/2}}{3\pi\delta^2L^2}
          \nu^{1/2}b^{-10/3}
 <2563\nu^{1/2}b^{-10/3}.
\]
For this numerical comparison one may use
$U^{3/2}<1747/1250$ and $\pi>333/106$; the coarser bound
$U^{3/2}<7/5$ is not used. The Gaussian multiplier error is at most
$1/(4\sqrt{2\pi}\sigma^2)<\nu^{-1}$.

Finally, Lemma~\ref{low:lem:inputs} gives
$|\chi(t)|\le e^{-t^2/25}$ on its full integration interval.
Integration by parts, followed by the elementary Gaussian tail bound,
yields for the two outside tails
\begin{align*}
 \frac1\pi\int_T^\infty t^2(e^{-t^2/25}+e^{-t^2/2})\,dt
 &\le\frac1\pi\left[
 \frac{25}{2}\left(T+\frac{25}{2T}\right)e^{-T^2/25}
 +(T+1/T)e^{-T^2/2}\right]\\
 &<\frac{43}{10}\left(T+\frac{25}{2T}\right)e^{-T^2/25}.
\end{align*}
The last inequality uses $27/(2\pi)<43/10$.
This proves the stated parametric estimate.
\end{proof}

\subsection{The explicit starting rank}
\begin{proof}[Proof of Theorem~\ref{low:thm:main}]
Put $u=\nu^{1/8}$ and choose the constants
\[
 K=31\,500\,000,\qquad
 u_0=\frac{8655433}{1000000},\qquad c=\frac{293}{500},
 \qquad d=c+u_0^{-3},\qquad T_0=\frac{147168}{10000}.
\]
Let $b=\lceil cu^3\rceil$. Exact rational comparisons give
\[
 u_0^8<K,\qquad cu_0^3>375,\qquad T_0^2d^2<Lu_0^2.
\]
Thus $u\ge u_0$, $b\ge376$, $cu^3\le b\le du^3$, and
$T\ge\sqrt L\,u/d\ge T_0$. The error in
Theorem~\ref{low:thm:parametric} is bounded above by
\begin{align*}
 E_0={}&\frac{12863}{1000}\frac{c^{-2/3}}{u_0^2}
 +\frac{4417}{1000}\frac{d^2}{u_0^2}
 +\frac{9013}{1000}\frac{d^{2/3}}{u_0^2}
 +\frac{6423}{1000}c^{-1/6}u_0^{-9/2}\\
 &+2563\frac{c^{-10/3}}{u_0^6}+u_0^{-8}
 +\frac{43}{10}\left(T_0+\frac{25}{2T_0}\right)e^{-T_0^2/25}.
\end{align*}
Every power replaced by its $u_0$ value decreases with $u$. The tail
function also decreases for every $T>0$, since
\[
 \frac{d}{dT}\left[(T+25/(2T))e^{-T^2/25}\right]
 =-\left(\frac{25}{2T^2}+\frac{2T^2}{25}\right)e^{-T^2/25}<0.
\]
Hence this is a uniform estimate for all $\nu\ge K$.

Upper rational enclosures for the five radicals in $E_0$, verified
by integer powers, and the lower bound
\[
 e^{T_0^2/25}>\sum_{r=0}^{60}\frac{(T_0^2/25)^r}{r!}
\]
give the exact comparison $E_0<398127/1000000$.
Moreover,
\[
 \frac{19943}{50000}<\sqrt{\frac7{44}}<\frac1{\sqrt{2\pi}},
 \qquad
 \frac{19943}{50000}-\frac{398127}{1000000}>\frac1{2000}.
\]
This proves the claimed positive curvature margin.

The arithmetic--geometric mean inequality converts positive second
difference into strict log-concavity, and untilting preserves the
determinant sign. Lemma~\MainRef{lem:mean} gives
\[
 \frac{n\lambda}{1+3\lambda}\le\eta_F(\lambda)\le n\lambda.
\]
Since the mean is continuous and strictly increasing,
every $j\le n/3128$ has a saddle $\lambda\le1/3125$.
For $j\ge K$, its value $\nu=n\lambda$ satisfies $\nu\ge j\ge K$.
This proves the full rank assertion.
\end{proof}

\section{Sharper thresholds and explicit barrier bounds}
\label{sec:quantitativecompletion}
\subsection{Completing the unconditional through-mean conclusion}
\begin{proof}[Proof of Corollary~\ref{central:cor:throughmean}]
Let $n\ge N_0=3125\cdot2^{104}$ and let $j\le\eta_F(1)$ be an internal
integer rank. If $j<K$, Lemma~\MainRef{lem:extension} applies because
\[
 2j^2-4<2K^2<2^{51}<N_0.
\]
Otherwise choose the unique physical activity $0<\lambda\le1$ with
$\eta_F(\lambda)=j$. Existence and uniqueness follow from continuity,
$\eta_F(\lambda)\to0$ as $\lambda\downarrow0$, and
$\mathrm d\eta_F/\mathrm d\log\lambda=\Var_\lambda X>0$.
If $\lambda\le1/3125$, then $n\lambda\ge j\ge K$ by
Lemma~\MainRef{lem:mean}, so Theorem~\ref{low:thm:main} applies.
If $\lambda>1/3125$, then $n\lambda>n/3125\ge2^{104}$ and
Theorem~\MainRef{central:thm:central} applies. Every saddle is covered.
Finally, $3125<2^{12}$ gives $N_0<2^{116}$, and
Lemma~\MainRef{lem:mean} gives $\eta_F(1)\ge n/4$.
\end{proof}

\begin{proof}[Proof of Corollary~\ref{cor:smalltilt}]
The general extension bound $(j+1)i_{j+1}\le(n-j)i_j$ implies
$n>(B+1)j+B$ at a stated tilted ascent. If $j<K$, then $B\ge2j-3$,
so $n>2j^2-3$ and Lemma~\MainRef{lem:extension} applies.
If $j\ge K$, then $n>3128j$, and Theorem~\ref{low:thm:main} applies.
No all-order barrier or unimodality theorem is used.
\end{proof}

\subsection{A quantitatively sharper prefix assembly}
\begin{proof}[Alternative proof of Theorem~\MainRef{thm:main}]
Fix $\varepsilon_0=1/3128$. By \cite[Theorem 1.2 and Section 9]{FLNYZ},
there is an absolute $N_{\varepsilon_0}$ such that every forest of order
$n\ge N_{\varepsilon_0}$ is strictly log-concave at every integer
$\varepsilon_0n\le j\le17\alpha(F)/25$. Take
\[
 N\ge\max\{N_{\varepsilon_0},\,2K^2,\,3128K\},
 \qquad K=31\,500\,000.
\]
For $1\le j<K$, apply Lemma~\MainRef{lem:extension}.
For $K\le j\le n/3128$, apply Theorem~\ref{low:thm:main}.
For $n/3128\le j\le17\alpha(F)/25$, apply the cited theorem.
Every integer belongs to at least one of these ranges. The added
requirement $n\ge3128K$ makes the low-density interval nonempty.
The constant $\varepsilon_0$ is fixed; no $n$-dependent value of
$\varepsilon$ is substituted into the external theorem.
\end{proof}

This gives an explicit lower-rank contribution to $N$, but it does not
replace $N_{\varepsilon_0}$ by $N_0$. The through-mean theorem and
the prescribed-prefix theorem have different endpoints.

\section{Optional explicit barrier bound: a fixed external input}\label{sec:lvz}
We state precisely the additional source dependence. The above-mean
argument uses the matching-mode, occupation, and large-mean weighted
estimates in the stated proof of Li, Vallier, and Zhang \cite{LVZ}.
For clarity, the anchor weights are marginal occupation probabilities:
put $\pi_v=\Prob_\lambda(v\in I)$ and choose an independent set $A$
maximizing $\sum_{v\in A}\pi_v$. Bipartiteness gives
$\sum_{v\in A}\pi_v\ge\eta/2$. Conditional on $Y=I\cap(V(F)- A)$,
the free vertices $A- N(Y)$ are independent Bernoulli choices
with parameter $q=\lambda/(1+\lambda)$. Thus their number $W$ satisfies
\[
 q\E W=\E|I\cap A|\ge\eta/2.
\]
In particular, $\E W\ge j/(2q)$ at a saddle with mean $j$.
The following input records precisely the consequences used here. It is
assumed from the fixed source and its stated computational estimates; it
is not a new theorem proved by the analytic argument of this paper.

\begin{externalinput}[LVZ, fixed draft 2.2]\label{input:lvz}
Let the physical activity be chosen so that $\eta_F(\lambda)=j$, and let
$A$ maximize the sum of the actual occupation marginals as above.
Under the estimates of \cite{LVZ}, the following statements hold:
\begin{enumerate}
\item An ascent $i_{j+1}>i_j$ has saddle activity $\lambda<7/3$,
by the matching comparison, matching-mode bound, and occupation bound
in \cite[Proposition 2.3(b), Lemma 3.7, and Theorem 3.9]{LVZ}.
\item In the domain $1\le\lambda<7/3$ and $\E W\ge50$, the stated
forest moment bounds and large-mean weighted inequality in
\cite[Sections 4--5, Theorem 5.5 and the averaging step of Proposition 5.8]{LVZ} give
\begin{equation}\label{eq:lvz-weighted}
 2i_j>(3-4q)i_{j-1}+(4q-1)i_{j+1}.
\end{equation}
\end{enumerate}
\end{externalinput}

\paragraph{Extraction from the fixed source.}
The second statement uses the proof of the source theorem, not just its
no-valley conclusion. Throughout its Section~5.3 the fixed kernel
weights are $w_L=q^2(3-4q)$ and $w_R=(1-q)^2(4q-1)$. The strict
averaging margin gives
\[
 \left(w_L\frac{1-q}{q}+w_R\frac{q}{1-q}\right)p_j
       -w_Lp_{j-1}-w_Rp_{j+1}>0.
\]
Substitute $p_r=i_r\lambda^r/Z_F(\lambda)$ and
$\lambda=q/(1-q)$, then divide by the positive factor
$q(1-q)\lambda^j/Z_F(\lambda)$ to obtain
\eqref{eq:lvz-weighted}. The source's starting free-count means all
lie in $[19,50]$, so $\E W\ge50$ suffices for this part of its proof.
These are external inputs, not conclusions of our Fourier analysis.
The source's finite moment checks supporting its weighted inequality
are treated as external computational inputs. We use only these stated inputs,
not the source's announced universal unimodality conclusion.

\paragraph{Version and data identification.}
All external result numbers in this section refer to draft 2.2 of
\path{arXiv:2610.07943v1}, dated October 6, 2026.
The fixed supplementary-material snapshot identified in its Section~7 is
\begin{center}\small\ttfamily
9f4f1ecfc439fdf43f8cfad0e888e54fe778d428.
\end{center}
The repository is
\href{https://github.com/zhangzenozhang-jpg/forest-unimodality-v2.2-supplement/tree/9f4f1ecfc439fdf43f8cfad0e888e54fe778d428}{\texttt{zhangzenozhang-jpg/forest-unimodality-v2.2-supplement}}.
Its three named assets are \texttt{twin\_v2.2\_data.txt},
\texttt{supplementary\_data.zip}, and \texttt{supplementary\_code.zip}.
The source-reported checksums and the exact statement locations are recorded
in the accompanying \texttt{anc/EXTERNAL\_INPUT\_LVZ.json}. These source-reported identifiers fix the assumed external materials.
The accompanying programs do not verify those external assets, interval
calculations, or formal development.
In particular, this section uses the weighted-kernel averaging estimate,
not merely the announced all-forest unimodality theorem.

\begin{proposition}[Above-mean barrier consequence of the stated input]\label{prop:above-mean}
Assuming External input~\ref{input:lvz}, a forest of order $n\ge280$
satisfies $\Delta_j>0$ at every internal ascent index
$j\ge\eta_F(1)$.
\end{proposition}
\begin{proof}
Choose the physical saddle with mean $j$. Since $j\ge\eta_F(1)$,
its activity is at least one. The ascent input gives
$1\le\lambda<7/3$, hence $1/2\le q<7/10$.
Lemma~\MainRef{lem:mean} gives $j\ge n/4\ge70$, and the free-count
bound gives $\E W\ge j/(2q)>50$. Thus
\eqref{eq:lvz-weighted} applies. Put $A_q=3-4q$ and $B_q=4q-1$;
then $0<A_q\le B_q$ and $A_q+B_q=2$.
If $y=i_{j+1}/i_j>1$ and $\Delta_j\le0$, then
$i_{j-1}/i_j\ge1/y$, so the right side of
\eqref{eq:lvz-weighted}, divided by $i_j$, is at least
\[
 A_q/y+B_qy=2+(y-1)(B_q-A_q/y)>2,
\]
a contradiction.
\end{proof}

\begin{corollary}[Explicit eventual barrier bound]\label{cor:explicit-barrier}
With the external input in Proposition~\ref{prop:above-mean}, every
forest of order at least $N_0=3125\cdot2^{104}$ is strictly barrier log-concave.
\end{corollary}
\begin{proof}
Corollary~\ref{central:cor:throughmean} gives strict log-concavity for
$j\le\eta_F(1)$, and Proposition~\ref{prop:above-mean} handles ascents
at the remaining ranks. Since $N_0>280$, both apply.
\end{proof}

The displayed cutoff is conservative and remains far beyond a feasible
finite enumeration. It does not settle the assertion at all orders and
does not give the same cutoff for the prescribed $17\alpha/25$ prefix.
The low-fugacity, through-mean, and uniform small-tilt conclusions do not
use the external estimates of this section.

\begin{corollary}[Explicit forest cutoff with the fixed external input]\label{cor:wc-explicit}
Assuming External input~\ref{input:lvz}, every well-covered forest $F$
with $\mu(F)\ge N_0=3125\cdot2^{104}$ has a strictly log-concave
independence polynomial. Equivalently, its nonisolated part has order
at least
\[
 2N_0=6250\cdot2^{104}=3125\cdot2^{105}.
\]
In particular, order at least $2N_0$ suffices for well-covered forests
without isolated vertices, and hence for well-covered trees.
\end{corollary}
\begin{proof}
Use Corollary~\ref{cor:explicit-barrier} for the base forest $H$ in
the proof of Theorem~\MainRef{cor:eventual-wc} and apply Theorem~\MainRef{thm:corona-transfer} with
transform order $|V(H)|+z(F)$. This is the explicit, additionally
conditional route; it does not replace the existential full-prefix
threshold by $N_0$.
\end{proof}

\section{Further consequences}
\paragraph{Many tail failures are compatible with the prefix.}
Bautista-Ramos \cite[Corollary 3]{BautistaSpherical} proves that, for every
fixed $m\ge3$ and all sufficiently large odd $\ell$, the subcubic tree
$T_{m,\ell}=T(2^m1^\ell)$ has at least $J_{m-3}$ log-concavity failures,
where $J_r=(2^r-(-1)^r)/3$. His threshold for $\ell$ may depend on $m$.
The breaks certified by that proof have reflected offsets
$1\le u\le J_m-2$. Counting vertices and using
\cite[Lemma 15]{BautistaSpherical}, write
$N_T=2^m(\ell+2)-1$ and $d=\alpha(T_{m,\ell})=(N_T+J_m)/2$.
For $\ell=2k+1$, one has
$d=5J_m+2^m k+(3(-1)^m-1)/2\ge5J_m-2$.
Thus their original indices satisfy
$j=d-u\ge d-J_m+2>4d/5>17d/25$.
This locates the certified breaks only, not every possible failure.
Combining his unbounded-break result with Theorem~\MainRef{cor:barrier}
shows that strictly barrier-log-concave subcubic trees can have arbitrarily many
log-concavity failures: first choose $m$, then choose odd $\ell$ large enough
for both statements. This consequence uses his stated bound, not the sharper
exponent-count observation recorded separately. Spherical graph symmetry is
not the coefficient symmetry used in the following subsection.

\subsection{Symmetry and \texorpdfstring{$\gamma$}{gamma}-positivity}\label{subsec:symmetry}
The symmetry considered here is coefficient symmetry:
$a_j=a_{d-j}$ for a sequence $a_0,\ldots,a_d$. In this subclass, the
barrier condition of Theorem~\MainRef{cor:barrier} is not weaker than
log-concavity.

\begin{lemma}[Reflection and the barrier condition]\label{lem:symmetric-barrier}
Let $a_0,\ldots,a_d$ be a positive symmetric sequence. Its log-concavity
determinants satisfy
\[
 \Delta_j:=a_j^2-a_{j-1}a_{j+1}=\Delta_{d-j}
 \qquad(1\le j<d).
\]
The sequence is barrier log-concave if and only if it is log-concave.
Log-concavity at all indices $1\le j\le\lfloor d/2\rfloor$ implies
log-concavity at every internal index; the same statement holds with strict
inequalities.
\end{lemma}
\begin{proof}
The determinant identity follows by substituting $a_j=a_{d-j}$.
Suppose the barrier condition holds and $\Delta_j<0$. Its contrapositive
gives $a_{j+1}\le a_j$. Positivity and
$a_{j-1}a_{j+1}>a_j^2$ then give $a_{j-1}>a_j$.
At the reflected index $d-j$, one has
$a_{d-j+1}=a_{j-1}>a_j=a_{d-j}$ and $\Delta_{d-j}<0$, contradicting
the barrier condition. The reverse implication is immediate.
For the last assertion, reflect each internal index $j$ to
$\min\{j,d-j\}\le\lfloor d/2\rfloor$.
When $d\le1$, all assertions about internal indices are vacuous.
\end{proof}

\begin{corollary}[An explicit symmetric case]\label{cor:symmetric-forest}
Every forest with at least $N_0=3125\cdot2^{104}$ vertices and a symmetric
independence polynomial is strictly log-concave at every internal index.
This conclusion is independent of the input in Section~\ref{sec:lvz}.
\end{corollary}
\begin{proof}
For a positive symmetric sequence of degree $d$, pairing ranks $j$ and
$d-j$ gives $\eta_F(1)=d/2$. Corollary~\ref{central:cor:throughmean} gives
strict log-concavity through $\lfloor d/2\rfloor$. Reflect the remaining
indices using Lemma~\ref{lem:symmetric-barrier}.
\end{proof}

No $\gamma$-positivity hypothesis is needed. Combined with the existence
theorem of Hibi, Kara, and Vien \cite[Theorem 0.1]{HKV}, this gives a
tree with a symmetric strictly log-concave independence polynomial at
every sufficiently large order and at every sufficiently large
polynomial degree. The existence result is theirs; the strict
log-concavity conclusion follows from the preceding corollary. For
example, any degree $d\ge N_0$ supplied by their theorem comes from
a tree with at least $d$ vertices. No all-order classification is asserted.

\paragraph{The distinction from $\gamma$-positivity.}
For a symmetric polynomial $h(z)=\sum_{j=0}^d a_jz^j$ with $a_0>0$,
write its unique expansion as
\[
 h(z)=\sum_{r=0}^{\lfloor d/2\rfloor}\gamma_r z^r(1+z)^{d-2r}
      =(1+z)^d\Gamma_h\!\left(\frac{z}{(1+z)^2}\right).
\]
The condition $\gamma_r\ge0$ implies symmetry and unimodality
\cite[Definition 1.3 and Lemma 1.4]{HKV}, but does not imply the barrier
condition. For example,
\[
 (1+z)^4+11z^2=1+4z+17z^2+4z^3+z^4
\]
is $\gamma$-positive, yet $4^2-1\cdot17=-1$ at a strict ascent.
This is a positive-polynomial example, not a tree independence polynomial.

Conversely, the $13$-vertex tree $G_{13}$ in
\cite[Lemma 2.3]{HKV} has coefficient sequence
\[
 (1,13,66,176,279,279,176,66,13,1).
\]
Its $\gamma$-polynomial is
$\Gamma_{G_{13}}(u)=1+4u+2u^2-2u^3-u^4$, whereas its internal
determinants are
\[
 (103,2068,12562,28737,28737,12562,2068,103).
\]
Thus it is strictly log-concave, and hence barrier log-concave, but is not
$\gamma$-positive. These direct calculations use the coefficient list
printed in \cite{HKV}; they are not claims made by its Bridge Lemma.
Thus neither $\gamma$-positivity nor barrier log-concavity implies the other
for positive symmetric polynomial sequences. The first
nonimplication above is not asserted within the narrower class of tree
independence polynomials. In contrast, pre-Lorentzianity is a sufficient
certificate for ordinary log-concavity on the classes where it is proved
\cite{BendjeddouHardiman,LiuTang}.

\begin{remark}[A sharper approximation at larger order]
The stronger approximation error $1/1000$ at $n\ge2^{256}$ and
$1/3125\le\lambda\le1$ also follows from the new parametric bound
(\MainRef{cubic:eq:12}). Indeed, $\nu>2^{244}$; choose $T=128$.
The first three terms are respectively less than
$2^{-38}$, $2^{-72}$, and $2^{-242}$. Since
$128^2/250>64\log2$ and
$(250/3)(128+125/128)<2^{15}$, the tail is less than $2^{-49}$.
Their sum is less than $1/1000$, and $T<\pi\sigma$ follows from
$\sigma^2\ge\nu/32$. The improved order cutoff $N_0$ uses the positive
curvature margin in Theorem~\MainRef{central:thm:central}, rather than this
smaller approximation error.
\end{remark}

\section{Domains of validity and quantitative margins}\label{app:checkability}
The following tables summarize the domains and numerical margins used
in the proof. All probabilistic expectations retain their stated sampling
law; none of the comparisons permits an exchange of rank and incidence
weights.

\subsection{Scope at the proof-critical transitions}
\begin{center}\small
\begin{tabular}{@{}>{\raggedright\arraybackslash}p{.21\textwidth}>{\raggedright\arraybackslash}p{.35\textwidth}>{\raggedright\arraybackslash}p{.36\textwidth}@{}}\toprule
Object or step&Required scope&What must not be substituted\\\midrule
Extension averages&Uniform independent sets of a supported rank in a forest;
$m_\alpha=0$&Unequal-activity rank weights, or the extension-incidence predecessor law.\\[3pt]
Response norms&Descendant probabilities, a parent-before-child orientation,
and $0<\lambda_v\le1$&Whole-graph marginals, unrestricted activities,
or a graph with cycles.\\[3pt]
Clock comparison&Centered martingale increments, finite third absolute
moments, positive target variance; crossing determined predictably&The
random clock replaced by its expectation, or an independent suffix.\\[3pt]
Centroid argument&The entire reveal history; fixed tie-breaking; common
activity on each remaining component&Independence between the conditional
mean and characteristic function.\\[3pt]
Fourier curvature&Integer mean, span-one integer-valued size, nonempty forest&An
$o(\sigma^{-1})$ local limit bound without second-difference control.\\[3pt]
Prefix assembly&Fixed $\varepsilon$ in the FLNYZ input and the two proved
lower-rank regimes&Setting $\varepsilon=1/n$ in a fixed-density theorem.\\[3pt]
Explicit BL&External input~\ref{input:lvz}, with $1\le\lambda<7/3$,
$\E W\ge50$&The source's headline unimodality claim alone.\\[3pt]
Well-covered consequence&$|V(H)|=\mu(F)$ and
$|V(F)|-z(F)=2\mu(F)$; require $\mu(F)\ge N$&Replacing the matching-number
condition by large total order when isolated vertices are allowed.\\\bottomrule
\end{tabular}
\end{center}
The notation is local: $q_v$ denotes a descendant occupation probability;
$\pi_v$ in Section~\ref{sec:lvz} is an actual marginal. The predictable
clock $A$, an independent set also denoted $A$, and the numerical constant
$A=19/3$ belong to different sections. The symbol $\nu=n\lambda$ denotes
activity mass; $\mu(F)$ denotes maximum matching size in the
well-covered-forest application.
A reader should attach the section and probability law before comparing
similarly named quantities.

\subsection{Numerical reserve, not a robustness theorem}
The central proof reconstructs the rational majorant
\[
 E_* = \frac{613959936395331007118784021397819}
 {1597239756287569045885846038773760}
 =0.38438808824940923\ldots .
\]
It leaves $0.39-E_*=0.00561191175059\ldots$ to the printed error bound
and $(2\pi)^{-1/2}-E_*=0.01455419215202\ldots$ to positivity. An extra
\emph{additive error in that same normalized estimate} must fit the
corresponding reserve. A relative change in a lemma constant must first
be propagated through the proof; it cannot be compared directly with
this absolute error.

For the sharper low-activity proof, the reserve beyond the claimed
$1/2000$ margin is
\[
 (2\pi)^{-1/2}-\frac{398127}{10^6}-\frac1{2000}
       =0.00031528040143268\ldots .
\]
All radicals and exponential remainders used for the endpoint bound are
controlled in the written proof. Neither decimal above is a replacement
for its exact comparisons. For the tail step, the endpoint-fraction
separation is $17/25-2/3=1/75$; the exact integer step remains
$j\le\lceil(2\alpha-1)/3\rceil-1$.

\subsection{Interpretation of the numerical bounds}
An additive perturbation must be measured in the normalization of the
corresponding estimate. A change to a response-norm bound propagates
through both the cubic-increment and variance-clock estimates before it
enters the Fourier integral. The exact comparisons in the proof, rather
than rounded decimal margins, determine the stated cutoffs. The finite
programs supplied with the paper check these constants and selected
identities, whereas the uniformity in order, degree, and activity is
established analytically.

\paragraph{Reproduction.}
The accompanying code checks 51 exact rational guards and finite
weighted-tree, martingale, and corona identities. From the extracted
source directory, run
\begin{verbatim}
python3 anc/run_checks.py --output-dir ../p1-checks
\end{verbatim}
These finite checks do not replace the analytic proofs. The external
LVZ computational and formal developments are not run by this command.
The exact certificates of the sharp-prefix companion \cite{LevitDraft}
are not premises of the main theorems here.

\section*{Use of AI tools}
ChatGPT was used for mathematical exploration and proof development,
code preparation and checking, literature comparison, and language
editing. The author is responsible for the mathematical content,
attribution, and computational materials. This statement does not
assert formal verification.

%% file: figures/prefix_cover.tex
\begin{figure}[tbp]
\centering
\begin{tikzpicture}[x=.94cm,y=.85cm,>=Latex,font=\small]
\draw[->] (0,0)--(15.3,0) node[right] {$k$};
\foreach \x/\t in {.2/1,3.1/{K'},6.5/{n/5},10.8/q,12.8/K,14.7/\alpha}
 {\draw (\x,.08)--(\x,-.08);\node[below] at (\x,-.08) {$\t$};}
\draw[very thick] (.2,1.5)--(3.1,1.5);
\draw[very thick] (3.1,1.5)--(6.5,1.5);
\draw[very thick,dashed] (6.5,1.5)--(12.8,1.5);
\foreach \x in {.2,3.1,6.5,12.8}{\draw (\x,1.35)--(\x,1.65);}
\node[align=center] at (1.65,2.08) {Quadratic\\extension bound};
\node[align=center] at (4.8,2.08) {Uniform curvature\\at a saddle};
\node[align=center] at (9.65,2.08) {Fixed-density interval\\of Fang--Lu--Nevo--Yao--Zheng};
\draw[thick] (10.8,.58)--(14.7,.58);
\draw (10.8,.43)--(10.8,.73);\draw (14.7,.43)--(14.7,.73);
\node[anchor=east] at (10.5,.6) {Known nonincreasing tail};
\draw[densely dotted] (10.8,.75)--(10.8,1.42);
\draw[densely dotted] (12.8,.08)--(12.8,1.35);
\draw[decorate,decoration={brace,mirror,amplitude=4pt}] (.2,-.74)--(12.8,-.74);
\node[below,align=center] at (6.5,-.87) {Strict log-concavity at every internal center $1\le k\le K$};
\end{tikzpicture}
\caption{The three ranges in the proof of Theorem~\ref{thm:main},
shown schematically and not to scale. Here $K'=2^{104}$,
$K=\lfloor17\alpha/25\rfloor$, and $q=\lceil(2\alpha-1)/3\rceil$.
The order threshold makes the first two ranges overlap; the protected
prefix reaches past the beginning of the nonincreasing tail. Every ascent
therefore has a strictly positive Tur\'an determinant.}
\label{fig:prefix-cover}
\end{figure}

%% file: figures/corona_decomposition.tex
\begin{figure}[tbp]
\centering
\begin{tikzpicture}[x=.9cm,y=.9cm,>=Latex,font=\small,
 old/.style={circle,fill=black,inner sep=1.9pt},
 leaf/.style={circle,draw,fill=white,inner sep=1.9pt}]
\node at (2,1.1) {$H=P_3\sqcup K_1$};
\foreach \i/\x in {a/0,b/1.4,c/2.8,d/4.2}{\node[old] (l\i) at (\x,0) {};}
\draw (la)--(lb)--(lc);
\draw[->,thick] (5,.05)--(6.5,.05) node[midway,above,align=center] {one leaf\\per vertex};
\node at (10.4,1.1) {$F=(H\circ K_1)\sqcup K_1$};
\foreach \i/\x in {a/7.3,b/8.7,c/10.1,d/11.5}{
 \node[old] (r\i) at (\x,0) {};
 \node[leaf] (p\i) at (\x,-.95) {};
 \draw (r\i)--(p\i);
}
\draw (ra)--(rb)--(rc);
\node[leaf] (iso) at (13.2,0) {};
\node[below,align=center] at (13.2,-.25) {isolated\\vertex};
\node[align=center] at (2,-1.5) {$|V(H)|=4$};
\node[align=center] at (10.2,-1.8) {$\mu(F)=4,\quad z(F)=1,\quad |V(F)|=9$};
\end{tikzpicture}
\caption{The well-covered-forest decomposition. Filled vertices are the
base vertices; open vertices are attached leaves or isolated components.
In general, $F=(H\circ K_1)\sqcup sK_1$ has
$I(F;x)=(1+x)^{|V(H)|+s}I(H;x/(1+x))$ and
$|V(F)|-z(F)=2\mu(F)=2|V(H)|$. The drawing illustrates the representation,
not the large-order threshold.}
\label{fig:corona-decomposition}
\end{figure}